\documentclass[11pt]{amsart}
\usepackage[numbers, square]{natbib}
\usepackage{amssymb}
\usepackage{amsmath}
\usepackage{amsfonts}
\usepackage{geometry}
\usepackage{amsthm}
\usepackage{hyperref}
\usepackage{wrapfig}
\usepackage[dvipsnames]{xcolor}

\providecommand{\U}[1]{\protect\rule{.1in}{.1in}}
\theoremstyle{plain}

\newtheorem{corollary}{Corollary}

\newtheorem{lemma}{Lemma}

\newtheorem{proposition}{Proposition}

\newtheorem{theorem}{Theorem}
\numberwithin{equation}{section}

\begin{document}

	\title[Quantitative unique continuation]{ Quantitative unique continuation for elliptic equations with H\"older continuous potentials}
	
	\author{Long Teng}
	\address{Department of Mathematics, Louisiana State University, Baton Rouge, LA, USA}
	\email{lteng2@lsu.edu}

	\author{Zhiwei Wang}
    \address{ Department of Mathematics, University of Science and Techonology
of China, Anhui Hefei, China}
	\email{wzhiwei@mail.ustc.edu.cn}
	
    \author{Jiuyi Zhu}
	\address{Department of Mathematics, Louisiana State University, Baton Rouge, LA, USA}
	\email{zhu@math.lsu.edu}
		\date{\today}	
	\subjclass[2020]{35J15, 35J10, 35B60, 35R30}
	\keywords{Quantitative unique continuation, frequency function, three-ball inequality,  H\"older regularity}
	
		\begin{abstract}
			We study quantitative unique continuation for second order elliptic equations with lower-order terms of H\"older regularity via a weighted frequency function method.
			We establish quantitative three-ball inequalities and corresponding vanishing-order bounds for  Schr\"odinger equations with H\"older potentials and H\"older gradient terms, and corresponding results for elliptic equations with variable leading coefficients. 
			Our results are quantitative with explicit dependence of  H\"older  norms in the three-ball inequalities. These fill in the gap for quantitative unique continuation between bounded potentials and $C^1$ potentials.
		\end{abstract}

		\maketitle
				
				\section{Introduction}
				\addcontentsline{toc}{section}{Introduction}

              In this paper,  we study quantitative unique continuation results for second-order elliptic equations with H\"older continuous lower-order terms. 
              %Using a weighted frequency function approach combined with mollification of H\"older regularity, we prove quantitative three-ball inequalities and explicit bounds on the order of vanishing for solutions of Schr\"odinger equations with H\"older continuous potentials. 
               % In particular, for solutions of $-\Delta u + V(x)u = 0$ where $V(x) \in C^{0,\beta}(B_{10})$, $||V||_{C^{0, \beta}(B_{10})} \leq M$, we obtain a vanishing order bound of $C(1 + M^{2/{(\beta + 3)}})$. This bound interpolates between the case $C^1$ obtained by Zhu and the bounded potential case due to Kenig, and recover the known optimal exponents at both endpoints. 
Quantitative unique continuation studies the quantitative behavior of strong unique continuation property (SUCP). The strong unique continuation property is a basic property concerning the uniqueness of partial differential equations.
We say the strong unique continuation property  for some partial differential equation holds  if vanishing of infinite order at one point implies the solution  is trivial. The strong unique continuation property holds for a wide range of partial differential equations including second order uniformly elliptic equations. 
If the solution is non-trivial and the strong unique continuation property holds, then the solution can not vanish of infinite order.  We want to ask how fast the solution vanishes and how the vanishing order depends on the potential and gradient terms in the equations. For a smooth function, vanishing order at a given point is the order of the first non-zero derivative such that all lower order derivatives vanish at the point. For non-smooth function, the vanishing order is the first non-zero $N$ such that ${|B_r|^{-1/2}} \|u\|_{L^2(B_r)}=O(r^N)$ as $r\to 0$, where $B_r$ is the Euclidean ball with radius $r$ centered at origin.
To obtain the vanishing order, one usually need to obtain the quantitative {three-ball inequalities} (or Hadamard's three-sphere inequality) or {doubling estimates}.
				%Let $L$ be a second-order elliptic operator.
				%We say that $L$ has the \emph{unique continuation property} (UCP) if any solution of $Lu=0$ which vanishes on a nonempty open set must vanish identically.
				%We say that $L$ has the \emph{strong unique continuation property} (SUCP) if any solution of $Lu=0$ which vanishes to infinite order at a point must be identically zero.
				%These qualitative properties are indispensable in spectral theory and geometric analysis, and they also underlie quantitative statements about nodal sets and related growth phenomena for eigenfunctions; see, for instance, \cite{Lin91,Zelditch08} and the references therein.
				%Beyond qualitative SUCP, one seeks \emph{quantitative} information.
				%A typical quantitative question asks for an explicit upper bound on the \emph{order of vanishing} of a nontrivial solution at a point in terms of norms of the coefficients.
				%Equivalently, one asks for lower bounds of the type
				%\[
				%\|u\|_{L^{2}(B_{r})}\gtrsim r^{\,\mathcal N}
				%\quad\text{as }r\downarrow 0,
				%\]
				%where the exponent $\mathcal N$ is controlled by the coefficient size.
                %which provide scale-invariant ways to measure propagation of smallness.
				A typical three-ball inequality asserts that 
				\[
				\|u\|_{L^{2}(B_{r_{2}})}
				\le
				C\|u\|_{L^{2}(B_{r_{1}})}^{\theta}\,
				\|u\|_{L^{2}(B_{r_{3}})}^{1-\theta}\,
				\]
				for $0<r_{1}<r_{2}<r_{3}$ and  some $\theta\in(0,1)$.
				Iterating such inequalities with a propagation-of-smallness argument  produces an explicit bound of vanishing order.
				
			Let us briefly review some literature on the strong unique continuation property for second order elliptic equations. The  strong unique continuation property relies crucially on the regularity of the potentials and  leading coefficients in the elliptic equations.
				For Schr\"odinger-type equations $\Delta u + V(x) u=0$,
			 Jerison and Kenig \cite{JK85} established strong unique continuation property under the assumption $V(x)\in L_{\mathrm{loc}}^{\frac{n}{2}}$, which is the borderline in the Lebesgue space. That is, the strong unique continuation property fails if $V(x)\in L^p_{\mathrm{loc}}$ for $p<\frac{n}{2}.$ For the general elliptic equations  with gradient terms 
                $ {\rm div}(A(x) \nabla u) +W(x)\cdot\nabla u+ V(x) u=0$,
             Koch and Tataru \cite{KT01}  showed the SUCP holds 
             for $W(x)\in L^{n+\epsilon}_{\mathrm{loc}}$ and $V(x)\in L_{\mathrm{loc}}^{\frac{n}{2}}$ for any small $\epsilon>0$. See e.g.  \cite{S90},\cite{W93}, and references therein for other literature on the study of unique  continuation property.
             Generally speaking, the Lipschitz continuous leading coefficient $A(x)$ is necessary to have the strong unique continuation property, see e.g. \cite{P63}. 
             %whenever the leading coefficients are Lipschitz continuous and the lower order terms satisfy
				%very general conditions.
				%On the other hand,  constructed counterexamples showing that SUCP can fail when the leading coefficients are less regular than Lipschitz.

                For the quantitative unique continuation property, the interesting case arises from the study of Laplace eigenfunctions. For Laplace
eigenfunctions on a compact smooth Riemannian manifold $\mathcal{M}$,
\begin{align}
\label{eigen-1}
    -\triangle_g \phi_\lambda=\lambda \phi_\lambda \ \ \mbox{on} \ \mathcal{M},
\end{align}
Donnelly and Fefferman in [DF88] show the sharp and maximal vanishing order of $\phi_\lambda$ is everywhere
less than $C \sqrt{\lambda}$, here $C$ only depends on the manifold $\mathcal{M}$. This sharpness of $C \sqrt{\lambda}$ can be seen from spherical harmonics.
                %Thus, even before considering quantitative estimates, there is a sharp qualitative threshold in the leading-order regularity.
			For the Schr\"odinger equations
				\begin{align}
				-\Delta u + V(x)u=0 \quad \mbox{in} \ B_{R}
                \label{shrod-1}
				\end{align}
				with non-trivial solutions $u$ and $\|V\|_{L^{\infty}}\le M,$       
Bourgain and Kenig \cite{BK05} considered the quantitative unique continuation for (\ref{shrod-1}) with a background from Anderson localization for the Bernoulli model. Using Carleman estimates, it was shown  in \cite{BK05} that the maximal order of vanishing is bounded by  $CM^{2/3}$.  This dependence is sharp for complex-valued potentials based on Meshkov's counter-example in  \cite{M92}. 
%See also e.g. some studies on 

For the real-valued bounded potential $V(x)$, the study of vanishing order is deeply connected to the Landis conjecture.   Landis  \cite{KL91} in the late 1960s conjectured that the
solution  to $\triangle u- V(x)u=0 $ in $\mathbb R^n$
is trivial if $|u|\leq e^{-C|x|^{1+\epsilon}}$
for any $\epsilon>0$. The Landis’ conjecture was settled recently in \cite{LMNN25}  in
the plane. See also e.g. \cite{KSW15}, \cite{DKW19} for the  study  of this conjecture in $\mathbb R^2$ with additional assumptions on the potentials. For study of the quantitative unique continuation for second order elliptic equations with singular potential $V(x)\in L^p$  for $p>\frac{n}{2}$ and with singular gradient potential $W(x)\in L^p$ for $q>n$, see e.g. \cite{DZ18}, \cite{DZ19}.

 Under some strong regularity assumptions on the potential $V(x)$, the order of vanishing  can be improved as indicated for Laplace eigenfunctions (\ref{eigen-1}). For the Schr\"odinger equations (\ref{shrod-1}) with $\|V\|_{C^1}\leq M$, it was shown in \cite{B12} and \cite{Z16} independently by different methods that the sharp vanishing order is $C \sqrt{M}$, which improves the early work by Kukavica \cite{K98}. 
 
Generally speaking, there are two principal methods to study unique continuation property: {Carleman estimates} and {frequency functions}.
Carleman estimates \cite{C39} are weighted integral inequalities. The Carleman weight function needs to have some convex conditions. The construction of a suitable Carleman weight function is one of major difficulties in establishing and applying Carleman estimates.
The frequency-function approach, originating from Almgren
for harmonic functions,  was adapted by Garofalo--Lin  \cite{GL86} to second order elliptic equations for strong unique continuation. One usually need to obtain some  monotonicity result for 
the frequency function. The variational structure of the equations is essential to build the frequency functions. These two methods do not distinguish the real-valued and complex-valued potentials.
            
            Recently, the frequency function approach was revisited by Davey in \cite{D25} to study the three-ball inequality or vanishing order for (\ref{shrod-1}) with bounded potential $\|V\|_{L^{\infty}}\le M,$ which has been obtained by Carleman estimates in \cite{BK05}. A modified frequency function,   was used to derive the quantitative three-ball inequality. This modified frequency was introduced in \cite{K00}, then studied in \cite{Z16} for Schr\"odinger equations with  $C^1$ potentials. See also \cite{BG16} for its application on the boundary for $C^1$ potentials. In particular, The modified weighted function has been studied for higher order elliptic equations in \cite{Z16}.
            For the $L^\infty$ potentials, an insight of completing  the square, instead of applying divergence theorem to $V(x)\in C^1$ as in \cite{Z16}, was used to derive almost monotonicity in \cite{D25}.

            For H\"older continuous potentials $V(x)$, we regularize the potential by a smooth potential and a bounded potential. Using the weighted frequency function, we obtain the following quantitative three-ball inequality.

				\begin{theorem}
			\label{thm:threeballs-I-opt-correct}
					Let $u$ be the solution of  (\ref{shrod-1}) with $R\geq 1$.
					Assume $\|V\|_{C^{0,\beta}(B_R)}\leq M$ for $0<\beta<1$ and some large constant  $M$.
					Let $0<r_1<r_2<2r_2<r_3< \frac{R}{2}$.
					Then there exists $C=C(n,\beta)>0$ such that
\begin{equation}\label{eq:threeballs-I-final}
	\|u\|_{L^2(B_{r_2})}\le \exp\!\left\{\,C\,M^{\frac{2}{\beta+3}}\,R^{\frac{4+2\beta}{\beta+3}}\,\right\}\|u\|_{L^2(B_{r_1})}^{\theta}\|u\|_{L^2(B_{r_3})} ^{1-\theta}\,			
					\end{equation}
                    for $0<\theta=\frac{\log \frac{r_3}{2r_2}}{\log \frac{r_3}{r_1} }<1$.
				\end{theorem}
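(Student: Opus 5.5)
We split the potential at a scale $\delta\in(0,1)$ to be optimized later, and then run a weighted frequency argument adapted to each piece. Let $\varphi_\delta$ be a standard mollifier and set $V=V_1+V_2$ with $V_1=V*\varphi_\delta$ and $V_2=V-V_1$. Hölder regularity gives
\[
\|V_1\|_{L^\infty}\le M,\qquad \|\nabla V_1\|_{L^\infty}\le C M\delta^{\beta-1},\qquad \|V_2\|_{L^\infty}\le CM\delta^{\beta}.
\]
So $V_1$ is a $C^1$ potential with a large gradient, and $V_2$ is a small bounded potential. The plan is to treat $V_1$ the way Zhu handles $C^1$ potentials, via the divergence theorem, and $V_2$ the way Davey handles bounded potentials, by completing the square.

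\textbf{Frequency function.} We define a weighted frequency with weight $(r^2-|x|^2)^\alpha$:
\[
H(r)=\int_{B_r}u^2(r^2-|x|^2)^{\alpha-1}|x|^2,\qquad D(r)=\int_{B_r}\big(|\nabla u|^2+V_1u^2\big)(r^2-|x|^2)^{\alpha},\qquad N(r)=\frac{r\,\cdot(\text{normalized } D)}{H}.
\]
The parameter $\alpha$ is large and will be tied to $M$ and $\delta$.

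\textbf{Almost monotonicity.} Differentiating gives three error terms.
\begin{enumerate}
\item The term $\int x\cdot\nabla V_1\,u^2(r^2-|x|^2)^\alpha$ is bounded by $CM\delta^{\beta-1}r^2 H/\alpha$-type quantities.
\item The term $V_1u^2$ inside $D$ is bounded using $\|V_1\|_\infty\le M$. Because of the weight, its contribution is of size $Mr^2/\alpha$ relative to $H$.
\item The $V_2$ term is handled by completing the square: $2\int V_2u\,(x\cdot\nabla u)w\le \epsilon\int(x\cdot\nabla u)^2 w+\epsilon^{-1}\int V_2^2u^2w$. This produces an error of size $M^2\delta^{2\beta}r^2/\alpha$ times $H$.
\end{enumerate}
The target is an inequality of the form
\[
\frac{d}{dr}\log N \gtrsim -\text{const}\qquad\text{or}\qquad N(r)\le C N(R')+C(\alpha+E),
\]
where $E$ collects the errors. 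Requiring these errors to be absorbed by the weight gain of order $\alpha$ forces
\[
\alpha\gtrsim \sqrt{M}\,R,\qquad \alpha\gtrsim (M\delta^{\beta-1})^{1/3}R,\qquad \alpha\gtrsim (M\delta^{\beta})^{2/3}R^{4/3}.
\]
The second condition uses the $C^1$ mechanism with gradient bound $L$, giving a cost like $L^{1/3}$ at this level of the argument. The exact exponents are to be checked in the computation.

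\textbf{Optimizing $\delta$.} We balance the gradient cost $(M\delta^{\beta-1})^{1/3}$ against the bounded cost $(M\delta^\beta)^{2/3}$. Setting them equal gives $\delta^{\beta-1}=M\delta^{2\beta}$, that is, $\delta=M^{-1/(\beta+1)}$. This would give $\alpha\sim M^{2/(3(\beta+1))}$, which does not match the stated $M^{2/(\beta+3)}$. So the true costs must be $\sqrt{M\delta^{\beta-1}}$ (the $C^1$ rate, Zhu's $\sqrt{M}$) and $(M\delta^\beta)^{2/3}$ (Bourgain–Kenig). Balancing these gives $M\delta^{\beta-1}=M^{4/3}\delta^{4\beta/3}$, so $\delta^{(\beta+3)/3}=M^{-1/3}$ and $\delta=M^{-1/(\beta+3)}$. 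The resulting cost is $\sqrt{M\delta^{\beta-1}}=M^{\frac12(1+\frac{1-\beta}{\beta+3})}=M^{2/(\beta+3)}$, which matches the theorem. The $R$-dependence $R^{(4+2\beta)/(\beta+3)}$ should come from the same balance with $r\le R$ kept in the bounds. We take $\alpha\simeq CM^{2/(\beta+3)}R^{(4+2\beta)/(\beta+3)}$.

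\textbf{From frequency bound to three balls.} Once $N(r)\le C\alpha$ is established for $r<R/2$, the relation $H'/H=(\text{dim}+2N)/r$ up to errors integrates to a doubling-type comparison between $H(r_1)$, $H(2r_2)$ and $H(r_3)$. Splitting the integral of $H'/H$ over $[r_1,2r_2]$ and $[2r_2,r_3]$ and using the log-convexity produced by monotonicity gives $\theta=\log\frac{r_3}{2r_2}/\log\frac{r_3}{r_1}$. The weight factors $(r^2-|x|^2)^\alpha$ are converted back to $L^2$ norms using $2r_2<r_3$. This step contributes factors $e^{C\alpha}$, which produce the constant $\exp\{C\alpha\}$ in (\ref{eq:threeballs-I-final}).

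\textbf{Main obstacle.} The hard step is the almost-monotonicity computation. The $V_1$ gradient term and the completed-square $V_2$ term have to be absorbed by the positive term coming from the weight, with exactly the right powers of $\alpha$ so that the optimization produces $M^{2/(\beta+3)}$. I would first redo Zhu's $C^1$ computation while tracking $\|\nabla V_1\|$ and $\|V_1\|$ separately. I would then insert Davey's completing-the-square term and check that the $\delta$-balance closes.
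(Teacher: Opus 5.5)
\textbf{Verdict: genuine gap.} Your architecture matches the paper's. It mollifies once, treats $V_\varepsilon$ by the divergence theorem (Zhu) and $f_\varepsilon=V_\varepsilon-V$ by completing a square (Davey) inside the frequency built on $e_r^\alpha=(r^2-|x|^2)^\alpha$, and then optimizes. Your final choices $\delta=M^{-1/(\beta+3)}$ and $\alpha\simeq M^{2/(\beta+3)}R^{(4+2\beta)/(\beta+3)}$ also agree with the paper's $\varepsilon=(\alpha/(MR))^{1/(\beta+1)}=(R/M)^{1/(\beta+3)}$. However, the computation that fixes the exponent is left undone, and the two mechanisms you do specify would not work as written.

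\emph{The bounded part.} A Young inequality $2\int V_2u(x\cdot\nabla u)w\le\epsilon\int(x\cdot\nabla u)^2w+\epsilon^{-1}\int V_2^2u^2w$ does not close. In $H^2\mathcal N'$ the entire positive term $\frac{4\alpha}{r}H\int_{B_r}(x\cdot\nabla u)^2e_r^{\alpha-1}$ is needed, via Cauchy--Schwarz, to cancel $-\frac{1}{\alpha r}I_0I$. Note that $H'$ contains $I_0=I-E$, not $I$. Spending any fraction of that term leaves a negative multiple of $\alpha^{-2}r^{-1}\big(\mathcal N-\frac{E}{H}\big)^2$ in $\mathcal N'$. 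This is quadratic in $\mathcal N$ and cannot be absorbed, since no a priori bound on $\mathcal N$ exists. The missing idea is the exact completion $\Phi=x\cdot\nabla u+\frac{e_r}{4\alpha}f_\varepsilon u$. Using $I_0=I-E$, one gets $J:=\int_{B_r}u\Phi\,e_r^{\alpha-1}=\frac{2I-E}{4\alpha}$ and $(I-E)I=16\alpha^2J^2-\frac{E^2}{4}$. Cauchy--Schwarz for $\Phi$ is then nonnegative, and the only remainder is $-\frac{H}{4\alpha r}\int_{B_r}f_\varepsilon^2u^2e_r^{\alpha+1}$, which is independent of $\mathcal N$. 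This gives $\mathcal N'\ge-2Mr-CM\varepsilon^{\beta-1}r^2-CM^2\varepsilon^{2\beta}r^3/\alpha$.

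\emph{The exponents.} Your listed error sizes, each of the form $(\cdot)\,r^2/\alpha$, give the wrong balance, as you noticed. You then substituted the costs $\sqrt{M\delta^{\beta-1}}$ and $(M\delta^\beta)^{2/3}$ because they reproduce $M^{2/(\beta+3)}$; that is not a derivation. The actual mechanism has three parts.
\begin{enumerate}
\item In $\mathcal N'$ the $\nabla V_\varepsilon$ term carries no gain in $\alpha$; only the square remainder carries $1/\alpha$.
\item $\mathcal N$ enters $H'/H=\frac{2\alpha+n-2}{r}+\frac{I-E}{\alpha rH}$ with a further factor $1/\alpha$.
\item One integrates over $[r_1,2r_2]$ and $[2r_2,r_3]$ and eliminates the unknown $\tilde{\mathcal N}(2r_2)$. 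This replaces your bound $\mathcal N\le C\alpha$, which fails for general solutions.
\end{enumerate}
The resulting exponent is $C\alpha+\frac{MR^2}{\alpha}+\frac{M\varepsilon^{\beta-1}R^3}{\alpha}+\frac{M^2\varepsilon^{2\beta}R^4}{\alpha^2}$, and optimizing it yields exactly your costs.

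Finally, getting the exact $\theta=\log\frac{r_3}{2r_2}/\log\frac{r_3}{r_1}$ with only an $e^{C\alpha}$ loss needs a cancellation. The $2(\alpha-1)\log$ terms coming from $\frac{2\alpha+n-2}{r}$ must cancel against $H(r)\le r^{2(\alpha-1)}h(r)$ and $H(2r_2)\ge(3r_2^2)^{\alpha-1}h(r_2)$, where $h(r)=\int_{B_r}u^2$. With the extra factor $|x|^2$ in your $H$, the second bound fails because the weight vanishes at the origin, so that factor should be dropped.
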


    The quantitative three-ball inequality in Theorem \ref{thm:threeballs-I-opt-correct} can be used to derive the vanishing order of solutions. We normalize the solutions in (\ref{shrod-1}) as follows.
    Assume 
	\begin{equation}\label{eq:assump-norm}
				\|u\|_{L^\infty(B_{10})}\le C_0,\qquad \|u\|_{L^2(B_1)}\ge 1.
					\end{equation}
				 \begin{corollary}
				    \label{thm:vanishing-I-correct}
					Assume $u$ satisfies   (\ref{shrod-1}) and (\ref{eq:assump-norm}) in $B_{10}$ with  $\|V\|_{C^{0,\beta}(B_{10})}\leq M$ for $0<\beta<1$.
					Then the vanishing order of $u$ in $B_{1/2}$ is at most $CM^{\frac{2}{\beta+3}}$, where $C$ depends on $n$, $\beta$, and $C_0$.
                    	 \end{corollary}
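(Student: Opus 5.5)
We derive Corollary \ref{thm:vanishing-I-correct} from the three-ball inequality of Theorem \ref{thm:threeballs-I-opt-correct}. The point is to apply it on balls of fixed size, so that the factor $R^{\frac{4+2\beta}{\beta+3}}$ is an absolute constant.

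Fix $x_0\in B_{1/2}$ and apply Theorem \ref{thm:threeballs-I-opt-correct} to the translated function $u(\cdot+x_0)$. It solves the same equation with potential $V(\cdot+x_0)$, and the $C^{0,\beta}$ norm of the translate is at most $M$. The equation holds in $B_{R}(x_0)\subset B_{10}$ with, say, $R=9$. We may assume $M\ge 1$ is large; otherwise we replace $M$ by $\max\{M,M_0\}$.

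Choose $r_3=4$, $r_2=3/2$ and a small radius $r_1=r<1/2$. This satisfies $r_1<r_2<2r_2=3<r_3<R/2$. Then
\[
\theta=\frac{\log(4/3)}{\log(4/r)},
\]
and the theorem gives
\[
\|u\|_{L^2(B_{3/2}(x_0))}\le e^{C_1M^{\frac{2}{\beta+3}}}\,\|u\|_{L^2(B_{r}(x_0))}^{\theta}\,\|u\|_{L^2(B_{4}(x_0))}^{1-\theta},
\]
with $C_1=C_1(n,\beta)$.

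We now use the normalization (\ref{eq:assump-norm}):
\begin{itemize}
\item Since $B_1\subset B_{3/2}(x_0)$ for $|x_0|<1/2$, the left-hand side is at least $1$.
\item Since $B_4(x_0)\subset B_{10}$, we have $\|u\|_{L^2(B_4(x_0))}\le C(n)C_0=:K$. We may take $K\ge 1$.
\end{itemize}
This yields
\[
1\le e^{C_1M^{\frac{2}{\beta+3}}}\,K\,\|u\|_{L^2(B_r(x_0))}^{\theta}.
\]
Raising to the power $1/\theta=\log(4/r)/\log(4/3)$ gives
\[
\|u\|_{L^2(B_r(x_0))}\ge \exp\Big\{-\frac{C_1M^{\frac{2}{\beta+3}}+\log K}{\log(4/3)}\,\log\frac4r\Big\}=\Big(\frac r4\Big)^{C_2M^{\frac{2}{\beta+3}}+C_3}.
\]
Here $C_2$ depends only on $n$ and $\beta$, and $C_3$ depends on $n$ and $C_0$.

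Since $|B_r|^{-1/2}\sim r^{-n/2}$, the normalized quantity satisfies
\[
|B_r|^{-1/2}\|u\|_{L^2(B_r(x_0))}\ge c\,r^{\,C_2M^{\frac{2}{\beta+3}}+C_3+n/2}\quad\text{for all } r<1/2.
\]
So the vanishing order at $x_0$ is at most $C_2M^{\frac{2}{\beta+3}}+C_3+n/2$. Because $M$ is large, this is at most $CM^{\frac{2}{\beta+3}}$ with $C=C(n,\beta,C_0)$.

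The argument is routine, and the only points needing care are these:
\begin{itemize}
\item checking that the radii satisfy the hypotheses $r_1<r_2<2r_2<r_3<R/2$ while keeping $B_4(x_0)$ inside $B_{10}$;
\item absorbing the additive constants $\log K$ and $n/2$ into $CM^{\frac{2}{\beta+3}}$, which uses the large-$M$ assumption;
\item noting that the translation does not increase the H\"older norm of the potential.
\end{itemize}
The real difficulty sits in Theorem \ref{thm:threeballs-I-opt-correct}, which we are allowed to assume.
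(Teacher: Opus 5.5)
Your proof is correct and follows the same route as the paper. Both apply Theorem \ref{thm:threeballs-I-opt-correct} with fixed outer radii and $r_1=r\to 0$, bound the middle norm from below and the outer norm from above using (\ref{eq:assump-norm}), and invert $\theta$. The paper does this at the origin with $r_2=1$, $r_3=4$, $R=10$, then simply asserts ``translation invariance'' for other centers. Your choice $r_2=3/2$, $R=9$ is slightly more careful: it guarantees $B_1\subset B_{3/2}(x_0)$ and $B_9(x_0)\subset B_{10}$ for every $x_0\in B_{1/2}$, so the normalization applies directly at each center.
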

                    
                  %  there exists $C=C(n,\beta,C_0)>0$ 
                 %   such that for all $0<r\le \frac{1}{10}$,
		%\begin{equation}\label{eq:vanishing-I-correct}
				%		\|u\|_{L^2(B_r)}\ge r^{\,C\left(1+M^{\frac{2}{\beta+3}}\right)} .

We also study the quantitative unique continuation for elliptic equations with drift terms,\begin{equation}\label{eq:mainPDE-1}
    	-\Delta u + W(x)\cdot \nabla u = 0 \qquad \text{in } B_R\subset\mathbb{R}^n,
    \end{equation}
    where $W\in C^{0,\beta_0}(B_R;\mathbb{R}^n)$ is H\"older continuous.
    Usually, it is more challenging to study unique continuation or quantitative unique continuation for the gradient term $W(x)$. See e.g. \cite{W93}, \cite{KT01} for the study of the strong unique continuation property for the gradient term $W(x)$ in critical Lebesgue space $L^q$ for $q>n$. See also \cite{J86} for a counter-example of Carleman estimates used to deal with the strong unique continuation for (\ref{eq:mainPDE-1}).
There is some difficulty in obtaining the sharp growth power for the solutions in (\ref{eq:mainPDE-1}) using the weighted frequency function as the proof of Theorem \ref{thm:threeballs-I-opt-correct}. Instead, we adopt  a novel lifting argument to incorporate the regular parts of gradient terms  into the leading coefficients. We are able to show the following three-ball inequality.

    				 \begin{theorem}
			\label{thm:threeballs-III-final}
					Let $0<\beta_0< 1$ and $u$ solve (\ref{eq:mainPDE-1}) with $R\geq 1$.
					Assume $\|W\|_{C^{0,\beta_0}}\leq K$.
					Then there exist some  constants $C(n, \beta_0)$ such that for any $0<r_1<r_2<2r_2<r_3<\frac{R}{2}$,
		\begin{equation}\label{eq:threeballs-III-final}
					\|u\|_{r_2}
    	\le
    	\exp\!\Bigg\{
    	CK^{\frac{2}{\beta_0+1}}R^{\frac{2}{\beta_0+1}}
    	\Bigl[
    	1+\log\!\Bigl(\frac{r_3}{2 r_2}\Bigr)
    	\Bigr]
    	\Bigg\}
    	\,
    	\|u\|_{r_1}^{\theta}\,
    	\|u\|_{r_3}^{1-\theta},
					\end{equation}
					where $0<\theta=\frac{\log r_3-\log 2r_2}
{\log r_3-\log 2r_2+{Ce^{C(r_3-r_1)}}(\log 2r_2-\log r_1)}<1.$

Theorem \ref{thm:threeballs-III-final} leads to the following Corollary on the vanishing order of $u$ in  (\ref{eq:mainPDE-1}).
				 \begin{corollary}
				\label{thm:vanishing-III-correct}
Assume $u$ satisfies  (\ref{eq:mainPDE-1})  and (\ref{eq:assump-norm}) in $B_{10}$ with  $\|W\|_{C^{0,\beta_0}(B_{10})}\leq K$ for $0<\beta_0<1$.
Then the vanishing order of $u$ in $B_{1/2}$ is at most $CK^{\frac{2}{\beta_0+1}}$, where $C$ depends on $n$, $\beta_0$, and $C_0$.
\end{corollary}

				%	Assume \eqref{eq:assump-norm}. Fix $\sigma=2$, $r_2=1$, $r_3=10$.
				%	Then there exists   $C=C(n,\beta,C_0)>0$ such that
				%	for all $0<r\le \frac{1}{10}$,
					%\begin{equation}\label{eq:vanishing-III-correct}
						%\|u\|_{L^2(B_r)}\ge r^{\,C\left(1+K^{\frac{2}{\beta_0+1}}\right)} .
					%\end{equation}    
				 
					%\begin{equation}\label{eq:vartheta-final}
					%	\vartheta=
					%	\frac{\log r_3-\log(\sigma r_2)}
					%	{\log r_3-\log(\sigma r_2)+3e^{c\Lambda R+C}\big(\log(\sigma r_2)-\log r_1\big)}\in(0,1).
					%\end{equation}
				\end{theorem}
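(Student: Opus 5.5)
The plan is to split $W$ into a regular part and a small rough part, absorb the regular part into the leading coefficients of an operator on $\mathbb{R}^{n+1}$, and run a Garofalo--Lin type frequency argument for the lifted equation.

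\emph{Splitting.} Fix a scale $\varepsilon\in(0,1)$ and a standard mollifier, and write $W=W_\varepsilon+(W-W_\varepsilon)$ with $W_\varepsilon=W*\rho_\varepsilon$. The usual H\"older estimates give
\[
\|W-W_\varepsilon\|_{L^\infty}\le K\varepsilon^{\beta_0},\qquad \|\nabla W_\varepsilon\|_{L^\infty}\le CK\varepsilon^{\beta_0-1},\qquad \|W_\varepsilon\|_{L^\infty}\le K .
\]
The rough part $W-W_\varepsilon$ will be treated as a bounded drift. The smooth part $W_\varepsilon$ is the one to be lifted into the principal part.

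\emph{Lifting.} Add a variable $t\in(-1,1)$ and set $w(x,t)=e^{\lambda t}u(x)$ for a parameter $\lambda\ge 1$. Take the symmetric $(n+1)\times(n+1)$ matrix
\[
A_\varepsilon=\begin{pmatrix} I_n & -\frac{W_\varepsilon}{2\lambda}\\ -\frac{W_\varepsilon^{T}}{2\lambda} & 1\end{pmatrix}.
\]
A direct computation using $\partial_t w=\lambda w$ gives
\[
\operatorname{div}(A_\varepsilon\nabla w)-(W-W_\varepsilon)\cdot\nabla_x w+\Big(\tfrac12\operatorname{div}W_\varepsilon-\lambda^2\Big)w=0 .
\]
Choosing $\lambda\simeq K$ makes $A_\varepsilon$ uniformly elliptic with constants depending only on $n$. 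Its Lipschitz constant is then $\Lambda\lesssim K^{-1}K\varepsilon^{\beta_0-1}=\varepsilon^{\beta_0-1}$.

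Since $\lambda^2\sim K^2$ would be too large a zero-order potential, I would not choose $\lambda\simeq K$ literally. Instead I would first rescale $x\mapsto x/s$ so that the drift $W_\varepsilon$ becomes small, and then take $\lambda\sim 1$ in rescaled variables. Tracking this rescaling, together with the $R$-dependence, is part of the bookkeeping.

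\emph{Frequency argument.} For the lifted equation (Lipschitz $A_\varepsilon$, bounded drift $b$, bounded potential $q$), I would use the Garofalo--Lin frequency $N(r)=rD(r)/H(r)$ built on the geodesic-type distance of $A_\varepsilon$. The aim is an almost-monotonicity estimate of the form
\[
\frac{d}{dr}\log\big(N(r)+C(\|b\|_\infty r+\|q\|_\infty^{1/2}r)^2\big)\ge -C\Lambda .
\]
Its two consequences are these:
\begin{itemize}
\item The doubling constant up to scale $R$ is bounded by $\exp\{C(\|b\|_\infty^2R^2+\|q\|_\infty R^2+1)\}$.
\item The logarithmic convexity of $H$ holds only up to the factor $e^{C\Lambda(r_3-r_1)}$. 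This is exactly the factor $Ce^{C(r_3-r_1)}$ appearing in $\theta$.
\end{itemize}
Integrating $H'/H=(n-1+2N)/r$ between $r_1,2r_2,r_3$ in the usual way produces the three-ball inequality for $w$ on cylinders. This also produces the extra $\log(r_3/2r_2)$ in the exponent, which comes from bounding $N$ on $[2r_2,r_3]$ by its value plus the error constant. Since $w$ factors as $e^{\lambda t}u(x)$, integrating in $t$ then returns the inequality for $u$.

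\emph{Choosing $\varepsilon$.} We have $\|b\|_\infty^2R^2\sim K^2\varepsilon^{2\beta_0}R^2$, while the Lipschitz cost (after rescaling) is governed by $\varepsilon^{-1}$-type quantities. Balancing $K\varepsilon^{\beta_0}\sim\varepsilon^{-1}$ gives $\varepsilon=K^{-1/(1+\beta_0)}$. With this choice both contributions are of size $K^{2/(1+\beta_0)}R^{2/(1+\beta_0)}$, which is the constant claimed in the exponent.

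\emph{Main obstacle.} The hardest step is the almost-monotonicity of $N$ for the lifted operator with all constants tracked, uniformly in the large parameters $K$ and $\varepsilon^{-1}$. I must check three things:
\begin{itemize}
\item The Lipschitz constant of $A_\varepsilon$ enters only through the multiplicative factor $e^{C\Lambda r}$ (so only in $\theta$) and not in the doubling exponent.
\item The term $\tfrac12\operatorname{div}W_\varepsilon\sim K\varepsilon^{\beta_0-1}$ is harmless after the rescaling and the choice of $\lambda$.
\item The normalization in $t$ does not distort the ratio of norms.
\end{itemize}
For the rough drift, I would first try Davey's completing-the-square device: write $b\cdot\nabla w\, (x\cdot\nabla w)$ as a square plus an error controlled by $\|b\|_\infty^2 r^2 H$.
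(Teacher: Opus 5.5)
Your plan has a genuine gap, and it sits in the step you defer to ``bookkeeping''. Rescaling cannot neutralize the zero-order term $-\lambda^2 w$ created by the lift, and it cannot neutralize the Lipschitz constant of $A_\varepsilon$, because both $\lambda r$ and $\Lambda r$ are dilation invariant.

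\textbf{The zero-order term.} Ellipticity of $A_\varepsilon$ forces $\lambda\gtrsim\|W_\varepsilon\|_\infty$, which in general is of size $K$. After $x\mapsto sx$ with $s\sim K^{-1}$ you may take $\lambda\sim 1$, but the radius becomes $KR$. Your own doubling bound $\exp\{C\|q\|_\infty R^2\}$ then still charges $\lambda^2(KR)^2\sim K^2R^2$. That is exactly the bounded-drift exponent you are trying to improve.

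\textbf{The Lipschitz constant.} Likewise $\Lambda\sim\|\nabla W_\varepsilon\|_\infty/\lambda\sim\varepsilon^{\beta_0-1}$, and $\Lambda(r_3-r_1)$ is unchanged by rescaling. So the factor in $\theta$ is $e^{C\varepsilon^{\beta_0-1}(r_3-r_1)}$, which for $\varepsilon=K^{-1/(1+\beta_0)}$ grows like $e^{CK^{(1-\beta_0)/(1+\beta_0)}(r_3-r_1)}$. This is not the $K$-independent $Ce^{C(r_3-r_1)}$ of the statement. It would also break the corollary: in the vanishing-order argument $\theta^{-1}$ multiplies the exponent, so you would get a bound exponential in a power of $K$.

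\textbf{Smaller inconsistencies.} Your balancing ``$K\varepsilon^{\beta_0}\sim\varepsilon^{-1}$'' contradicts your claim that $\Lambda$ enters only through $\theta$. Also, with $\varepsilon=K^{-1/(1+\beta_0)}$ the drift term gives $K^{2/(1+\beta_0)}R^2$, not $R^{2/(1+\beta_0)}$; you need $\varepsilon=(KR)^{-1/(1+\beta_0)}$.

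\textbf{The missing idea.} Make the lift parameter large enough that the lifted matrix has an $O(1)$ Lipschitz constant, and then remove the resulting large zero-order terms exactly rather than estimating them. The paper does this as follows.
\begin{enumerate}
\item It takes $\lambda=LB$ with $B=\|W_\varepsilon\|_{C^1}+1\sim K\varepsilon^{\beta_0-1}$, so that $|\nabla b|\le 1/L$.
\item It absorbs $\tfrac12\operatorname{div}W_\varepsilon$ into a diagonal coefficient $a_{yy}$ by a further real lift $e^{L_1\sqrt{\hat B}\,y}$. This is where $|\nabla^2W_\varepsilon|\le CK\varepsilon^{\beta_0-2}$ is used, to keep $a_{yy}$ Lipschitz.
\item It cancels the leftover constant potential $-\mu^2$, with $\mu^2=2L^2B^2+L_1\hat B$, by an oscillatory lift $e^{i\mu z}$. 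This costs nothing, since $|e^{i\mu z}|=1$.
\end{enumerate}
The final equation in $\mathbb{R}^{n+4}$ has universal ellipticity and Lipschitz bounds and only the small drift $g_\varepsilon$. Davey's bounded-drift three-ball inequality then applies, with exponent $CK^2\varepsilon^{2\beta_0}R^2$ and a $K$-independent $\theta$. Your use of Davey's device for the rough part is fine and matches the paper.

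The large $\lambda$ is paid only linearly, through the factors $e^{C(LB+L_1\sqrt{\hat B})R}$ incurred when returning from the lifted function to $u$. Balancing $RK\varepsilon^{\beta_0-1}$ against $K^2\varepsilon^{2\beta_0}R^2$ is what produces $(KR)^{2/(1+\beta_0)}$.

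In short, a large constant potential generated by a lift costs about $\lambda R$ if it is cancelled exactly, but $\lambda^2R^2$ if it is estimated as a bounded potential in the frequency argument. Your plan only has the second option.
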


    For elliptic equations with variable leading coefficients, we are also able to show the following three-ball inequality with the weighted frequency functions. Let  $u$ satisfy
    \begin{align}\label{variable-1}
					-\sum^{n}_{i,j=1}\partial_j\!\big(a_{ij}(x)\partial_i u\big)+V(x)u=0 \qquad \text{in }  B_R .
					\end{align}
	Assume $A=(a_{ij})^n_{i,j=1}$ is symmetric, uniformly elliptic with constants $\lambda,\Lambda$,
					and Lipschitz continuous with coefficient $L_A\geq 1$,
	\begin{align}
    \label{lips-1}
	\lambda |\xi|^2 \leq \sum^{n}_{i, j=1} a_{ij}(x) \xi_i\xi_j\leq \Lambda |\xi|^2 \quad \mbox{for every } \xi\in \mathbb R^n,\ \  \qquad \mbox{and} \qquad  \|\nabla A\|_{L^\infty(B_R)}\le L_A.
	\end{align}
                    
				\begin{theorem}
			\label{thm:threeballs-II-opt-correct}
					Let $0<\beta< 1$ and  $u$ solve (\ref{variable-1}) under the assumptions (\ref{lips-1}).
					Assume $\|V\|_{C^{0,\beta}(B_R)}\leq M$. Then there exist some universal constants $C_1(n, \lambda, \Lambda, L_A, \beta)$ and $C(n, \lambda, \Lambda, \beta)$ such that for any $0<r_1<r_2<2r_2<r_3<\frac{R}{2}$,
                   % define
		%\begin{equation}\label{eq:beta-gamma-part2}
					%	\xi:=e^{-c_0L_A r_1}\log\!\Big(\frac{2r_2}{r_1}\Big),
					%	\qquad
					%	\eta:=e^{-c_0L_A r_3}\log\!\Big(\frac{r_3}{2r_2}\Big),
						%\qquad
						%\theta:=\frac{\eta}{\xi+\eta}\in(0,1),
					%\end{equation}
					%where $c_0>0$ is an absolute constant.
					%Then there exists $C=C(n,\lambda,\Lambda,\beta)>0$ such that
		\begin{equation}\label{eq:threeballs-II-final}
				\|u\|_{L^2(B_{r_2})}\le \exp\!\left\{\,C_1 M^{\frac{2}{\beta+3}} R^{\frac{4(\beta+1)}{\beta+3}} (\log\frac{r_3}{2r_2}+1)\,\right\}\|u\|_{L^2(B_{r_1})}^{\theta}\|u\|_{L^2(B_{r_3})} ^{1-\theta}\,			
					\end{equation}
                    for some $0<\theta=\frac{\log r_3-\log 2r_2}
{\log r_3-\log 2r_2+{e^{CL_A(r_3-r_1)}}(\log 2r_2-\log r_1)}
                    <1$.
                    	\end{theorem}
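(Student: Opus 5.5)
We follow the strategy of Theorem \ref{thm:threeballs-I-opt-correct}, adapted to the variable-coefficient setting through a Garofalo--Lin type weighted frequency function.

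\textbf{Step 1: splitting the potential.} Fix a scale $\epsilon\in(0,1)$ and mollify, $V_\epsilon=V*\rho_\epsilon$. Then
\[
\|V-V_\epsilon\|_{L^\infty}\le M\epsilon^\beta,\qquad \|\nabla V_\epsilon\|_{L^\infty}\le CM\epsilon^{\beta-1},\qquad \|V_\epsilon\|_{L^\infty}\le M.
\]
The equation now carries a $C^1$ part $V_\epsilon$, which we will handle by the divergence theorem as in \cite{Z16}. The bounded remainder $V-V_\epsilon$ we will handle by completing the square as in \cite{D25}.

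\textbf{Step 2: the frequency function.} Following Garofalo--Lin, set
\[
\mu(x)=\frac{\langle A(x)x,x\rangle}{|x|^2},
\]
and use the conformal metric $g_{ij}=(\det A)^{1/(n-2)}a^{ij}$, or equivalently the vector field $\beta_i=a_{ij}x_j/\mu$. With this, define weighted quantities on $B_r$:
\begin{align*}
H(r)&=\int_{\partial B_r}\mu u^2,\\
D(r)&=\int_{B_r}\big(\langle A\nabla u,\nabla u\rangle+V_\epsilon u^2\big)(r^2-|x|^2)^{\alpha},
\end{align*}
with a weight exponent $\alpha$ (or a weight $e^{\cdot}$). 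We then differentiate $D$ through the Rellich--Pohozaev identity associated with the vector field $\beta$.
\begin{itemize}
\item The Lipschitz bound on $A$ produces error terms of size $CL_A N(r)$. These are absorbed by the factor $e^{CL_A r}$, which is exactly the source of the $e^{CL_A(r_3-r_1)}$ appearing in $\theta$.
\item The term $x\cdot\nabla V_\epsilon\,u^2$ contributes $CM\epsilon^{\beta-1}r^2$.
\item The bounded part $(V-V_\epsilon)u\,\beta\cdot\nabla u$ is controlled by Cauchy--Schwarz against the positive gradient term, i.e. by completing the square. This contributes $C(M\epsilon^\beta)^2 r^4$ in the weighted setting, or more generally a term of the form $M\epsilon^\beta r^2\sqrt{\cdot}$.
\end{itemize}
The outcome is almost-monotonicity of the form
\[
\frac{d}{dr}\Big(e^{CL_Ar}N(r)\Big)\ge -C\big(M\epsilon^{\beta-1}r+M^2\epsilon^{2\beta}r^3+\cdots\big).
\]

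\textbf{Step 3: choosing $\epsilon$.} Integrating over $r\le R$ bounds the frequency by
\[
N(r)\le e^{CL_A(R-r)}N(R)+C\big(M\epsilon^{\beta-1}R^2+M\epsilon^\beta R^{2}+\cdots\big).
\]
A naive balance of $M\epsilon^{\beta-1}R^2$ against the $L^\infty$ contribution $\sqrt{M\epsilon^\beta}\,R$ would give the wrong exponent. The correct balance is the one used in Theorem \ref{thm:threeballs-I-opt-correct}: equating $\sqrt{M\epsilon^{\beta-1}}\,R$ with $(M\epsilon^\beta)^{2/3}R^{4/3}$ yields
\[
\epsilon\sim M^{-1/(\beta+3)}R^{\cdots},\qquad\text{so the error is }M^{2/(\beta+3)}R^{4(\beta+1)/(\beta+3)}.
\]
The exponent on $R$ here differs from Theorem \ref{thm:threeballs-I-opt-correct} because the variable-coefficient weights carry extra powers of $R$. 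As in Theorem \ref{thm:threeballs-I-opt-correct}, the frequency we actually work with is $\sqrt{\cdot}$-modified, i.e. Kukavica's $\sqrt{N+M}$ trick.

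\textbf{Step 4: from frequency to the three-ball inequality.} Since
\[
\frac{d}{dr}\log H=\frac{n-1}{r}+\frac{2N}{r}+O(L_A),
\]
we integrate $\log H$ from $r_1$ to $2r_2$ and from $2r_2$ to $r_3$. We then use the monotone comparison of $e^{CL_Ar}N$ on the two intervals. This produces the weights in $\theta$, namely $e^{CL_A(r_3-r_1)}$ multiplying $\log(2r_2/r_1)$. The additive error, integrated over $[2r_2,r_3]$, gives the factor $\log\frac{r_3}{2r_2}+1$ in the exponent. Finally, passing from spherical $H$ to solid $L^2$ norms uses ellipticity ($\lambda,\Lambda$) and the doubling obtained along the way.

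\textbf{Main obstacle.} The hardest step is Step 2: controlling the Pohozaev error terms from variable $A$ jointly with the completed-square term. The completing-square trick needs the $\beta$-vector field, not $x$, to pair with $A\nabla u$. We would first try the Garofalo--Lin change to geodesic-polar-type coordinates, so that $A(0)=I$ and $\mu\equiv1$ up to $O(L_A|x|)$. This keeps the \cite{D25} structure intact, with every coefficient error being linear in $L_A r$.
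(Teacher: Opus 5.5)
Your handling of the $L_A$-errors through the factor $e^{CL_Ar}$, and the resulting form of $\theta$, agrees with the paper. The gap is in Steps 2--3, which is exactly where the exponent $M^{2/(\beta+3)}$ has to come from.

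\textbf{The sharp exponent is never produced.} You mention a weight exponent $\alpha$, but it never enters your almost-monotonicity inequality, and Step 3 optimizes only $\epsilon$. Take $\frac{d}{dr}(e^{CL_Ar}N)\ge -C(M\epsilon^{\beta-1}r+M^2\epsilon^{2\beta}r^3+\cdots)$ at face value. It puts an additive error of order $M\epsilon^{\beta-1}R^2+M^2\epsilon^{2\beta}R^4$ into the exponent, and optimizing in $\epsilon$ then gives only $M^{2/(\beta+1)}$ (up to powers of $R$), not $M^{2/(\beta+3)}$. The balance you invoke, $\sqrt{M\epsilon^{\beta-1}}\approx(M\epsilon^{\beta})^{2/3}$, is the right heuristic, but nothing in your argument yields it. It presupposes one estimate that gives the $C^1$ square root for $V_\epsilon$ and the $L^\infty$ two-thirds power for $V-V_\epsilon$ at the same time. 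Neither the $C^1$ result nor the bounded-potential result applies as a black box to the sum. Kukavica's $\sqrt{N+M}$ trick is not what the proof of Theorem~\ref{thm:threeballs-I-opt-correct} uses, and it does not give the two-thirds power.

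\textbf{The missing idea: make $\alpha$ a free parameter.} Take $\alpha$ large and work with solid weighted quantities throughout: $H(r)=\int_{B_r}u^2\nu\,e_r^{\alpha-1}$, and $D,L,E$ with weight $e_r^{\alpha}$.
\begin{itemize}
\item Then $\frac{H'}{H}=\frac{2\alpha+n-2}{r}+\frac{\mathcal{N}}{\alpha r}+O(L_A)+O(\frac{M\varepsilon^\beta r}{\alpha})$.
\item Complete the square with $\Phi=A\nabla u\cdot x+\frac{e_r}{4\alpha}f_\varepsilon u$, using $J=\frac{2I-E}{4\alpha}$ and $J^2\le H\int_{B_r}\Phi^2\nu^{-1}e_r^{\alpha-1}$. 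This costs only $\frac{C}{\alpha}M^2\varepsilon^{2\beta}r^3$ in $\mathcal{N}'$.
\item After integrating $H'/H$, the $C^1$-type errors enter the exponent divided by $\alpha$. The $L^\infty$ error $M^2\varepsilon^{2\beta}R^4$ enters divided by $\alpha^2$. Removing the weight to return to $\|u\|_{L^2}$ costs $C\alpha(1+\log\frac{r_3}{2r_2})$.
\item Minimizing $\alpha+\alpha^{-1}X+\alpha^{-2}Y^2$ over $\alpha$ is what produces $\sqrt{X}+Y^{2/3}$. The paper takes $\varepsilon=(\alpha/M)^{1/(\beta+1)}$ and $\alpha=M^{2/(\beta+3)}R^{4(\beta+1)/(\beta+3)}$.
\end{itemize}

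\textbf{Inconsistent setup.} You pair a spherical $H=\int_{\partial B_r}\mu u^2$ with a solid weighted $D$. The identity $\frac{d}{dr}\log H=\frac{n-1}{r}+\frac{2N}{r}$ belongs to the unweighted frequency. It fails for your weighted $D$, and it lacks exactly the $1/\alpha$ factor that the optimization needs.

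\textbf{The power of $R$ is asserted, not derived.} You flag the mismatch between $x\cdot\nabla u$ (from differentiating $\int V_\varepsilon u^2e_r^\alpha$) and $\omega\cdot\nabla u$ with $\omega=Ax/\nu$ (from the Rellich identity for $D$). The paper handles it by integrating by parts and using $(x-\omega)\cdot x=0$, which bounds this term by $CL_A(Mr^2+M\varepsilon^{\beta-1}r^3)H$. Balancing the resulting $L_AM\varepsilon^{\beta-1}R^4/\alpha$ against $M^2\varepsilon^{2\beta}R^4/\alpha^2$ is what gives the power $R^{4(\beta+1)/(\beta+3)}$.
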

					
                    %where the optimized exponent is the explicit function
					%\begin{align}\label{eq:Psi2-final}
					%	\Psi_2(M,L_A;R)
						%:={}&
					%	L_A R
					%	+ M^{\frac{2}{\beta+3}}\,R^{\frac{4+2\beta}{\beta+3}}[log\frac{r_3}{2r_2}+1]
					%	+ M^{\frac{\beta+1}{\beta+3}}\,R^{\frac{2}{\beta+3}}
					%	+ L_A\,M^{\frac{\beta+1}{\beta+3}}\,R^{\frac{\beta+5}{\beta+3}}
					%	+ L_A\,M^{\frac{2}{\beta+3}}\,R^{\frac{3\beta^2+10\beta+7}{(\beta+1)(\beta+3)}} .
				%	\end{align}

				%\begin{theorem}[Order of vanishing from Theorem~\ref{thm:threeballs-II-opt-correct}]
				%	\label{thm:vanishing-II-correct}
				%	Assume $u$ solves $-\partial_j(a_{ij}\partial_i u)+Vu=0$ in $B_{10}$ and the hypotheses of
				%	Theorem~\ref{thm:threeballs-II-opt-correct} hold in $B_{10}$ with $R=10$.
				%	Assume \eqref{eq:assump-norm}. Then there exist constants
					%$C=C(n,\lambda,\Lambda,\beta_0)>0$ and $r_\ast=r_\ast(n,\lambda,\Lambda,L_A)>0$ such that
					%for all $0<r\le r_\ast$,
					%\begin{equation}\label{eq:vanishing-II-correct}
					%	\|u\|_{L^2(B_r)}\ge r^{\,C\left(1+\Psi_2(M,L_A;10)\right)} ,
				%	\end{equation}
				%	where $\Psi_2$ is defined in \eqref{eq:Psi2-final}.
				%\end{theorem}

				It is obvious that the exponent $M^{2/(\beta+3)}$ for H\"older potentials  
                in Theorem \ref{thm:vanishing-I-correct} and Corollary \ref{thm:vanishing-I-correct} interpolates between $C^1$ potential and bounded potentials. That is, it matches the $C^1$ potentials with $M^{1/2}$  in 
        \cite{Z16,B12} when $\beta=1$,
				and the bounded potential  with $M^{2/3}$  in
                \cite{BK05} when $\beta=0$.
				To achieve this interpolation results, we adapt the mollification for H\"older continuous functions and the weighted frequency functions, which contribute our first novel technique.
				Our second novel idea concerns the proof of Theorem \ref{thm:threeballs-III-final} with {H\"older gradient terms}.
			    We mention that  there is some difficulty in 
                using weight frequency function as the proof of Theorem  \ref{thm:threeballs-I-opt-correct} to derive the
                sharp power of quantitative unique continuation. After regularizing the H\"older gradient term into regular parts and bounded parts, it seems be impossible to balance the growth power from these two parts to achieve the optimal exponent in the three-ball inequalities. 
             We
                introduce the {lifting} procedure to incorporate the regular parts of the gradient term into the leading coefficients of a new elliptic equations and thus leave only the bounded parts to optimize.  The exponent $K^{\frac{2}{\beta_0+1}}$  for H\"older norms in Theorem \ref{thm:vanishing-III-correct}  interpolates between $C^1$ gradient terms and bounded potentials. That is, it matches the
                 $C^1$ gradient term with $K$  in  \cite{BC14}, \cite{Z15} when $\beta_0=1$, and the bounded gradient term
              with $K^2$ in \cite{D14} when  $\beta_0=0$.
				
				The organization of the paper is as follows.
				Section \ref{basic equation with potential Vu} is devoted to the proof of Theorem \ref{thm:threeballs-I-opt-correct} using the weighted frequency-function  for classical Schr\"odinger equations with H\"older continuous $V(x)$. The goal is to present the complete idea and self-contained proof for this simple model without spending efforts dealing with calculation technicalities. Section~\ref{nabla term and lifting} introduces the lifting method for H\"older gradient terms and prove Theorem \ref{thm:threeballs-III-final} and Corollary \ref{thm:vanishing-III-correct}.
				Section~\ref{general eqution} extends the arguments to second order elliptic equations  with Lipschitz leading coefficients  and prove Theorem \ref{thm:threeballs-II-opt-correct}. Compared with section \ref{basic equation with potential Vu}, there are technicalities to deal with the variable leading coefficients. The Appendix in section 5 include some basic lemma in the proof of Theorem \ref{thm:threeballs-II-opt-correct}.
                The letters $C$, $C_i$, ${c}$ and $c_i$ denote positive constants that do not depend on $V$, $W$ or $u$, and may vary from line to line. The potential $V(x)$ and the gradient term $ W(x)$ can be complex-valued functions. The letters $K, M\geq 1$ are assumed to be some large constants.
				
				%Finally, in Section~\ref{vanishing order} we prove vanishing order, namely, Theorem \ref{thm:vanishing-I-correct}-\ref{thm:vanishing-III-correct}.

\section{Three-ball inequality for classical Schr\"odinger equations} \label{basic equation with potential Vu}

In this section, we study the quantitative unique continuation for the following classical Schr\"odinger equations
	\begin{equation}\label{eq:main}
		-\Delta u + V(x)u = 0 \qquad \text{in } B_R,
	\end{equation}
	where $V \in C^{0,\beta}(B_R)$ and  $0<\beta<1$. Note that $B_R$ is a Euclidean ball centered at origin with radius $R$ and  $R$ is a fixed constant. We skip the dependence of the center for this notation. We aim to provide a self-contained  and detailed proof for Theorem \ref{thm:threeballs-I-opt-correct}. We regularize the H\"older continuous potential with the standard mollifier.
	Let $V_\varepsilon(x) = \rho_\varepsilon * V=\int_{\mathbb R^n}\rho_\varepsilon(x-y)V(y)\,dy$, where
	\[
	\rho_\varepsilon(x)=\frac{1}{\varepsilon^n}\rho\Bigl(\frac{x}{\varepsilon}\Bigr),
	\qquad
	\rho(x)\in C_0^\infty(B_1),
	\qquad
	\int_{B_1}\rho(x) dx=1.
	\]
	Assume
	\begin{align}
	\|V\|_{L^\infty(B_R)}\le M,
	\qquad
	[V]_{C^{0,\beta}(B_R)}\le M_0.
    \label{assump-1}
	\end{align}
	Later on we will choose $\max\{M, M_0\}=M$ since we consider $\|V\|_{C^{0,\beta}(B_R) }\leq M$. We want to see how the H\"older semi-norm $[V]_{C^{0,\beta}(B_R)}$ plays the role in the quantitative estimates.
	We can show the following regularization estimates for $V_\varepsilon$.
	\begin{lemma} \label{lem:moll}
    It holds that
 $\|V_\varepsilon\|_{L^\infty(B_{R-\varepsilon})}\le M$ and
	\begin{equation}\label{eq:moll-est}
		\|\nabla V_\varepsilon\|_{L^\infty(B_{R-\varepsilon})}\le C M\varepsilon^{\beta-1},
		\qquad
		\|V-V_\varepsilon\|_{L^\infty(B_{R-\varepsilon})}\le C M_0 \varepsilon^\beta
	\end{equation}
     for a constant $C=C(n)$.
	\end{lemma}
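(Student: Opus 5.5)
The three estimates follow directly from the definition of the mollifier. Fix $x\in B_{R-\varepsilon}$. Since $\operatorname{supp}\rho_\varepsilon\subset B_\varepsilon$, the convolution $V_\varepsilon(x)=\int_{B_\varepsilon}\rho_\varepsilon(z)V(x-z)\,dz$ only samples $V$ on $B_\varepsilon(x)\subset B_R$. Hence the hypotheses (\ref{assump-1}) are available at every point where we evaluate. I will take $\rho\ge 0$, as is standard for the mollifier, so that $\int\rho_\varepsilon=1$ gives $\|\rho_\varepsilon\|_{L^1}=1$.

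\textbf{The $L^\infty$ bound.} We have $|V_\varepsilon(x)|\le \int\rho_\varepsilon(z)|V(x-z)|\,dz\le M$. If $\rho$ were allowed to change sign, this would only give $\|\rho\|_{L^1}M$, which is still of the form $CM$.

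\textbf{The approximation bound.} Using $\int\rho_\varepsilon=1$, write
\[
V_\varepsilon(x)-V(x)=\int_{B_\varepsilon}\rho_\varepsilon(z)\bigl(V(x-z)-V(x)\bigr)\,dz .
\]
Since $|V(x-z)-V(x)|\le M_0|z|^\beta\le M_0\varepsilon^\beta$, this gives $\|V-V_\varepsilon\|_{L^\infty(B_{R-\varepsilon})}\le M_0\varepsilon^\beta$, up to the constant $\|\rho\|_{L^1}$.

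\textbf{The gradient bound.} Differentiate under the integral to get $\nabla V_\varepsilon(x)=\int\nabla\rho_\varepsilon(x-y)V(y)\,dy$. The key point is the cancellation $\int\nabla\rho_\varepsilon=0$, because $\rho_\varepsilon$ has compact support. This lets us subtract $V(x)$:
\[
\nabla V_\varepsilon(x)=\int_{B_\varepsilon}\nabla\rho_\varepsilon(z)\bigl(V(x-z)-V(x)\bigr)\,dz .
\]
Rescaling gives $\|\nabla\rho_\varepsilon\|_{L^1}=\varepsilon^{-1}\|\nabla\rho\|_{L^1}$. Therefore
\[
|\nabla V_\varepsilon(x)|\le \varepsilon^{-1}\|\nabla\rho\|_{L^1}\,M_0\varepsilon^\beta=C(n)M_0\varepsilon^{\beta-1}\le CM\varepsilon^{\beta-1},
\]
where the last step uses the convention $\max\{M,M_0\}=M$ stated just before the lemma.

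The only real subtlety is in this last step. A naive bound $|\nabla V_\varepsilon|\le \|\nabla\rho_\varepsilon\|_{L^1}\|V\|_\infty$ yields only $CM\varepsilon^{-1}$. It is the cancellation $\int\nabla\rho_\varepsilon=0$ combined with the H\"older seminorm that upgrades this to $\varepsilon^{\beta-1}$. All constants depend only on $n$ through the fixed choice of $\rho$.
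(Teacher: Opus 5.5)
Your proposal is correct and follows the paper's proof essentially step by step. The $L^\infty$ bound comes from $\int\rho_\varepsilon=1$ with $\rho\ge 0$, which the paper also uses implicitly. The approximation bound comes from subtracting $V(x)$, and the gradient bound comes from the cancellation $\int\nabla\rho=0$ combined with the H\"older seminorm. Like the paper, your argument actually gives $CM_0\varepsilon^{\beta-1}$ for the gradient, and you rightly note that the stated $CM\varepsilon^{\beta-1}$ relies on the convention $M_0\le M$.
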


	\begin{proof}
   
	%	For $x\in B_{R-\varepsilon}$,
		%\[
		%V_\varepsilon(x)=\int_{B_R}\rho_\varepsilon(x-y)V(y)\,dy.
		%\]
	%	Moreover, $|V_\varepsilon(x)|\le M$ and
	%	\[
	%	|\nabla V_\varepsilon(x)|
	%	=\left|\int_{B_R}\nabla\rho_\varepsilon(x-y)V(y)\,dy\right|
	%	\le C K \varepsilon^{\beta-1},
		%\qquad
	%	|V_\varepsilon(x)-V(x)|\le C K \varepsilon^\beta.
		%\]

		For $x\in B_{R-\varepsilon}$, the convolution $V_\varepsilon(x)$ is supported in $B_R$.
    %i.e.$V_\varepsilon(x)=\int_{B_R}\rho_\varepsilon(x-y)V(y)\,dy$. 
		Hence we have 
        \begin{align}
		|V_\varepsilon(x)|
		\le \|V\|_{L^\infty(B_R)}\int_{\mathbb R^n}\rho_\varepsilon(y)\,dy
		\le M.
        \end{align}
Since $V_\varepsilon(x)$ is smooth in $B_{R-\varepsilon}$, it is true that
		\[
		\nabla V_\varepsilon(x)
		=\int_{B_R}\nabla\rho_\varepsilon(x-y)V(y)\,dy
		=\frac{1}{\varepsilon^{n+1}}\int_{\mathbb R^n}\nabla\rho\Bigl(\frac{x-y}{\varepsilon}\Bigr)V(y)\,dy.
		\]
		Set $z=\frac{x-y}{\varepsilon}$. We get
		\[
		\nabla V_\varepsilon(x)
		=\int_{B_1}\varepsilon^{-1}\nabla\rho(z)\,V(x-\varepsilon z)\,dz.
		\]
By the fact that $\int_{B_1}\nabla\rho(z)\,dz=0$, it follows that
		\[
		\nabla V_\varepsilon(x)
		=\int_{B_1}\varepsilon^{-1}\nabla\rho(z)\bigl(V(x-\varepsilon z)-V(x)\bigr)\,dz.
		\]
The H\"older continuity of $V$ yields that
		\[
		|\nabla V_\varepsilon(x)|
		\le \varepsilon^{-1} M_0\int_{B_1}|\nabla\rho(z)|\,|\varepsilon z|^\beta\,dz
		\le C M_0 \varepsilon^{\beta-1}.
		\]
		Finally, we derive
		\[
		|V_\varepsilon(x)-V(x)|
		=\left|\int_{B_R}\bigl(V(y)-V(x)\bigr)\rho_\varepsilon(x-y)\,dy\right|
		\le M_0 \int_{B_R}|x-y|^\beta\rho_\varepsilon(x-y)\,dy
		\le C M_0 \varepsilon^\beta.
		\]
        This completes the proof of the Lemma.
	\end{proof}

    Now we rewrite \eqref{eq:main} as
	\begin{equation}\label{eq:moll-eq}
		-\Delta u + V_\varepsilon u = f_\varepsilon u,
		\qquad
		f_\varepsilon:=V_\varepsilon - V,
		\qquad
		\|f_\varepsilon\|_{L^\infty}\le C M_0 \varepsilon^\beta.
	\end{equation}

Our frequency functions have a weight component. 
    Define the weight function as $e_r(x)=r^2-|x|^2.$
We introduce some notations involved the weight function.
	\begin{align}
		H(r) &:= \int_{B_r} u^2\, e_r^{\alpha-1}\,dx, \quad \quad
		I_0(r) := 2\alpha\int_{B_r} u(x\cdot\nabla u)\, e_r^{\alpha-1}\,dx, \label{notations-1}\\
		D(r) &:= \int_{B_r} |\nabla u|^2\, e_r^\alpha\,dx, \quad \
		L(r) := \int_{B_r} V_\varepsilon u^2\, e_r^\alpha\,dx,\label{notations-2}\\
		E(r) &:= \int_{B_r} f_\varepsilon u^2\, e_r^\alpha\,dx, \quad \quad
		I(r) := D(r)+L(r). \label{notations-3}
	\end{align}
		We define our frequency function as 
        \begin{align}
        \label{frequency-2}
            \mathcal{N}(r):=\frac{I(r)}{H(r)}.
        \end{align}
       %This modifier frequency functions with weights $ e_r^{\alpha}$  that replace boundary integrals by weighted bulk functionals. 
       These modified frequency functions offer more flexibility and can cancel the boundary terms when applying integral by parts arguments. In the following proof, we combine the ideas in \cite{Z16} and \cite{D25}.
We first show a general differentiation formula with derivative with respect to $r$ involving the weight function. It is derived by a simple integration by parts argument. We include the proof to show the conveniences of using this weight function and for complete of the presentation.
	\begin{lemma}\label{lem:diffF}
		For any $G\in C^1$, it holds that
		\begin{equation}\label{eq:diff-formula}
			\frac{d}{dr}\int_{B_r}G\,e_r^\alpha\,dx
			=\frac{2\alpha+n}{r}\int_{B_r}G\,e_r^\alpha\,dx
			+\frac{1}{r}\int_{B_r}x\cdot\nabla G\,e_r^\alpha\,dx.
		\end{equation}
	\end{lemma}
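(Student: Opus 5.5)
The most direct route is to rescale the ball $B_r$ to the unit ball, so that all of the $r$-dependence moves into the integrand. Writing $x=ry$ with $y\in B_1$, we have $e_r(ry)=r^2(1-|y|^2)$ and $dx=r^n\,dy$, hence
\[
\int_{B_r}G\,e_r^\alpha\,dx=r^{2\alpha+n}\int_{B_1}G(ry)\,(1-|y|^2)^\alpha\,dy .
\]
The right-hand side is now an integral over a fixed domain, with an integrand that is $C^1$ in $r$. I would first treat $\alpha\ge 0$, where $(1-|y|^2)^\alpha$ is bounded, and note that the same argument works for any $\alpha>-1$, where the weight is integrable.

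The next step is to differentiate under the integral sign, which is allowed by dominated convergence since $G\in C^1$ and the weight is integrable. The product rule gives
\[
(2\alpha+n)r^{2\alpha+n-1}\int_{B_1}G(ry)(1-|y|^2)^\alpha\,dy+r^{2\alpha+n}\int_{B_1}y\cdot\nabla G(ry)\,(1-|y|^2)^\alpha\,dy .
\]

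Finally, I change variables back with $y=x/r$. The first term becomes $\frac{2\alpha+n}{r}\int_{B_r}G\,e_r^\alpha\,dx$. In the second term, $y\cdot\nabla G(ry)=\frac1r\,x\cdot\nabla G(x)$, so it becomes $\frac1r\int_{B_r}x\cdot\nabla G\,e_r^\alpha\,dx$. Together these give \eqref{eq:diff-formula}.

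There is no real obstacle here. The only point that needs care is justifying the differentiation under the integral, which follows from the local boundedness of $G$ and $\nabla G$ on a neighbourhood of $\overline{B_r}$ together with the integrability of the weight.

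As an alternative, one could differentiate directly. This gives $2\alpha r\int_{B_r} G\,e_r^{\alpha-1}\,dx$ with no boundary term, since $e_r=0$ on $\partial B_r$ and $\alpha>0$. One then uses $x\cdot\nabla e_r^\alpha=-2\alpha|x|^2e_r^{\alpha-1}$ and the divergence theorem applied to $G\,x\,e_r^\alpha$. The scaling argument avoids this bookkeeping, so I would present that one.
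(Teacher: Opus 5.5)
Your scaling argument is correct, and it takes a genuinely different route from the paper's. The paper differentiates the moving-domain integral directly. The boundary term drops because $e_r^\alpha=0$ on $\partial B_r$, which leaves $2\alpha r\int_{B_r}G\,e_r^{\alpha-1}\,dx$. It then splits $r^2=e_r+|x|^2$, writes $2\alpha|x|^2e_r^{\alpha-1}=-x\cdot\nabla e_r^\alpha$, and integrates by parts, with $\operatorname{div}x=n$ producing the factor $n$. This is essentially the alternative you sketch at the end.

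Your substitution $x=ry$ instead uses the dilation invariance of the pair $(B_r,e_r)$ to move everything onto the fixed domain $B_1$. After that, the identity is just the product and chain rules, and the only analysis needed is dominated convergence. Your argument also works for every $\alpha>-1$. The paper's route needs $\alpha>0$ for the boundary terms to vanish, which is harmless since the paper only applies the lemma with exponents $\alpha$ and $\alpha-1$ for $\alpha$ large.

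What the paper's route buys is that it never uses the exact homogeneity of the radial field $x$. It only uses $\nabla e_r=-2x$ and the vanishing of the weight on $\partial B_r$. That is precisely the template reused in Lemmas~\ref{lem:hprime} and~\ref{lem:Dprime_variable}. There $x$ is replaced by $Ax$ or $\omega=\nu^{-1}Ax$, still with $\omega\cdot x=|x|^2$, and the defects $\operatorname{div}(Ax)\nu^{-1}-n$ and $\nabla\omega-I$ appear as the error terms $\mathcal{E}_H$ and $\mathcal{E}_D$.

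There is no dilation symmetry adapted to $A$, so your scaling trick would not directly produce those identities. For instance, applying your formula to $G=u^2\nu$ yields $u\,\nu\,(x\cdot\nabla u)$ rather than the term $u\,(A\nabla u\cdot x)$ needed to match $I_0$.
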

	
	\begin{proof}
		Since $e_r^\alpha$ vanishes on $\partial B_r$, the term with integration on the boundary will disappear  when taking derivatives with respect to $r$. Thus, we have
		\begin{align}
		\frac{d}{dr}\int_{B_r}G\,e_r^\alpha
		=\int_{B_r}G\,\frac{d}{dr}(e_r^\alpha)
		=2\alpha \int_{B_r}G r\,e_r^{\alpha-1}.
        \label{general-1}
		\end{align}
	By the fact that $r^2=e_r+|x|^2$, we have
		\begin{align}
		2\alpha\int_{B_r} r G e_r^{\alpha-1}
		=\frac{2\alpha}{r}\int_{B_r} Ge_r^\alpha
		+\frac{2\alpha}{r}\int_{B_r}G|x|^2 e_r^{\alpha-1}.
        \label{general-2}
		\end{align}
	Integrating by parts also shows that
		\[
		\frac{2\alpha}{r}\int_{B_r}G|x|^2 e_r^{\alpha-1}
		=-\frac{1}{r}\int_{B_r}G\,x\cdot\nabla e_r^\alpha
		=-\frac{1}{r}\int_{B_r}x\cdot\nabla(Ge_r^\alpha)
		+\frac{1}{r}\int_{B_r}x\cdot\nabla G\,e_r^\alpha.
		\]
		Applying the integrating by argument again to have
		\begin{align}
		\frac{2\alpha}{r}\int_{B_r}G|x|^2 e_r^{\alpha-1}
		=\frac{n}{r}\int_{B_r}G e_r^\alpha+\frac{1}{r}\int_{B_r}x\cdot\nabla G\,e_r^\alpha.
        \label{general-3}
		\end{align}
        The combination of (\ref{general-1})-(\ref{general-3}) yields \eqref{eq:diff-formula}.
	\end{proof}

         Before we prove the almost monotonicity of frequency function  $\mathcal{N}(r)$,
 we first show some identities.
	\begin{lemma}\label{lem:I0 Simple}
		It holds that
		\begin{equation}\label{eq:I0=I-E}
			I_0(r)=D(r)+L(r)-E(r)=I(r)-E(r).
		\end{equation}
        \begin{equation}\label{eq:Hprime}
			H'(r)=\frac{2\alpha+n-2}{r}H(r)+\frac{1}{\alpha r}I_0(r)
			=\frac{2\alpha+n-2}{r}H(r)+\frac{1}{\alpha r}\bigl(I(r)-E(r)\bigr).
		\end{equation}
	\end{lemma}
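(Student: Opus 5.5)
We prove the two identities in Lemma \ref{lem:I0 Simple} separately. The first comes from a single integration by parts using the equation \eqref{eq:moll-eq}. The second comes from the differentiation formula of Lemma \ref{lem:diffF}, applied with the exponent $\alpha-1$ in place of $\alpha$.

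For \eqref{eq:I0=I-E}, we multiply \eqref{eq:moll-eq} by $u\,e_r^\alpha$ and integrate over $B_r$. The weight $e_r^\alpha$ vanishes on $\partial B_r$ (we take $\alpha>1$, or at least $\alpha>0$, so that the boundary term disappears). Integrating $-\int_{B_r}\Delta u\, u\, e_r^\alpha$ by parts therefore gives
\[
\int_{B_r}|\nabla u|^2e_r^\alpha+\int_{B_r}u\,\nabla u\cdot\nabla e_r^\alpha .
\]
Since $\nabla e_r^\alpha=-2\alpha x\, e_r^{\alpha-1}$, the second term equals $-2\alpha\int_{B_r}u(x\cdot\nabla u)e_r^{\alpha-1}=-I_0(r)$. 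The remaining terms of the equation contribute $L(r)$ on the left and $E(r)$ on the right. This yields $D-I_0+L=E$, which is exactly \eqref{eq:I0=I-E}.

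For \eqref{eq:Hprime}, we note that the proof of Lemma \ref{lem:diffF} works for any exponent $\gamma$ for which the boundary contribution vanishes. Applied with $\gamma=\alpha-1$ and $G=u^2$, it gives
\[
H'(r)=\frac{2(\alpha-1)+n}{r}H(r)+\frac1r\int_{B_r}x\cdot\nabla(u^2)\,e_r^{\alpha-1}.
\]
The last integral is $2\int_{B_r} u(x\cdot\nabla u)e_r^{\alpha-1}=I_0(r)/\alpha$. This is the first equality in \eqref{eq:Hprime}, and substituting \eqref{eq:I0=I-E} gives the second.

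The only delicate point is justifying Lemma \ref{lem:diffF} for the exponent $\alpha-1$. We need $e_r^{\alpha-1}$ to vanish on $\partial B_r$, which requires $\alpha>1$. We also need the differentiation under the integral sign to be legitimate, which holds since $u\in C^1$ by elliptic regularity. Alternatively, one can compute directly:
\[
\frac{d}{dr}\int_{B_r}u^2e_r^{\alpha-1}=2(\alpha-1)r\int_{B_r} u^2e_r^{\alpha-2}.
\]
One then splits $r^2=e_r+|x|^2$ and integrates by parts exactly as in \eqref{general-2}–\eqref{general-3}, which needs only the integrability of $e_r^{\alpha-2}$ and reduces the question to the same $\alpha>1$ condition. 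We will assume $\alpha>1$ throughout, as is implicit in the definition of $H$.
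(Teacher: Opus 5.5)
Your proof is correct and follows essentially the same route as the paper. The first identity comes from the same integration by parts of $u\,\Delta u\,e_r^\alpha$ combined with \eqref{eq:moll-eq}, and the second is Lemma \ref{lem:diffF} applied with $G=u^2$ and $\alpha$ replaced by $\alpha-1$. You also state explicitly that $\alpha>1$ is needed, both for the boundary terms to vanish and for $e_r^{\alpha-2}$ to be integrable, which the paper leaves implicit.
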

	
	\begin{proof}
		Since $\nabla e_r^\alpha=\alpha e_r^{\alpha-1}\nabla e_r$ and $\nabla e_r=-2x$, then
		\begin{align}\label{more-ef1}
		-\int_{B_r}u\nabla u\cdot\nabla e_r^\alpha\,dx
		=2\alpha\int_{B_r}u(x\cdot\nabla u)e_r^{\alpha-1}\,dx
		=I_0(r).
		\end{align}
		Performing integrating by parts argument gives that
		\begin{align}\label{more-ef2}
		-\int_{B_r}u\nabla u\cdot\nabla e_r^\alpha\,dx
		=\int_{B_r}|\nabla u|^2 e_r^\alpha\,dx+\int_{B_r}u\,\Delta u\, e_r^\alpha\,dx.
		\end{align}
		It follows from \eqref{eq:moll-eq} and the last two identities  that
		\[
		\int_{B_r}u\,\Delta u\, e_r^\alpha
		=\int_{B_r}V_\varepsilon u^2 e_r^\alpha-\int_{B_r}f_\varepsilon u^2 e_r^\alpha
		=L(r)-E(r).
		\]
		Thanks to (\ref{more-ef1}) and (\ref{more-ef2}), we obtain $I_0(r)=D(r)+L(r)-E(r)$. This finishes the proof of (\ref{eq:I0=I-E}).

        Recall that $H$ and $I_0$ are introduced in (\ref{notations-1}). Let $G=u^2$ with $\alpha$
	replaced by $\alpha-1$ in Lemma \ref{lem:diffF}. We obtain
     \begin{align*}
    H'(r)&=\frac{2(\alpha-1)+n}{r}\int_{B_r}u^2 e_r^{\alpha-1}\,dx+\frac{2}{r}\int_{B_r}u(x\cdot\nabla u)e_r^{\alpha-1}\,dx    \nonumber\\
    &=\frac{2\alpha+n-2}{r} H(r)+\frac{1}{\alpha r}I_0(r).
    \end{align*}
Together with (\ref{eq:I0=I-E}), we arrive at the proof of (\ref{eq:Hprime}).
\end{proof}

	%	Next we show the identity (\ref{eq:Hprime}).
      %  Taking derivative with respect to $r$ gives that
       
%	Since $r^2=e_r+|x|^2$, we rewrite the last inequality as 
		%\begin{align}
		%	H'(r)
	%		&=2(\alpha-1)\int_{B_r}u^2 \frac{(e_r+|x|^2)}{r}e_r^{\alpha-2}\,dx\nonumber\\
	%		&=\frac{2(\alpha-1)}{r}\int_{B_r}u^2 e_r^{\alpha-1}\,dx
	%		+\frac{2(\alpha-1)}{r}\int_{B_r}u^2|x|^2 e_r^{\alpha-2}\,dx.
    %        \label{right-1}
	%	\end{align}
	%Note that
	%	\[
	%	\nabla e_r^{\alpha-1}=(\alpha-1)e_r^{\alpha-2}\nabla e_r=-2(\alpha-1)x e_r^{\alpha-2}.
	%	\]
	%	For the second term in the right hand side of (\ref{right-1}), we have
	%	\begin{align*}
	%		\frac{2(\alpha-1)}{r}\int_{B_r}u^2|x|^2 e_r^{\alpha-2}\,dx
	%		&=-\frac{1}{r}\int_{B_r}u^2 x\cdot\nabla e_r^{\alpha-1}\,dx\\
	%		&=\frac{2}{r}\int_{B_r}u(x\cdot\nabla u)e_r^{\alpha-1}\,dx
	%		+\frac{n}{r}\int_{B_r}u^2 e_r^{\alpha-1}\,dx\\
	%		&=\frac{1}{\alpha r}I_0(r)+\frac{n}{r}H(r).
	%	\end{align*}
     %   Together with (\ref{right-1} and the last inequality, the identity \eqref{eq:Hprime} is derived.

Now we are able to prove the monotonicity results for the weighted frequency functions.
\begin{proposition}\label{pro-1}
Let $u$ be the solution of (\ref{eq:main}) with $V\in C^{0,\beta}$ satisfying (\ref{assump-1}). Then 
\begin{align}
    \tilde{\mathcal{N}}(r):=\mathcal{N}(r)+P(r)
\end{align}
is non-decreasing with respect to $r$ in $B_{R-\varepsilon}$, where $P(r)=Mr^2+CM_0\varepsilon^{\beta-1}r^3+\frac{CM_0^2\varepsilon^{2\beta}}{\alpha}r^4.$
\end{proposition}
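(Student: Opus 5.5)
The plan is to differentiate $\mathcal{N}=I/H$ directly, using Lemma \ref{lem:diffF} for $I'$ and \eqref{eq:Hprime} for $H'$. I will show that the $\frac{2\alpha+n-2}{r}$ contributions cancel. After completing a square, the remaining main term is nonnegative by Cauchy--Schwarz. Every leftover error term should be bounded below by $-P'(r)$, where
\[
P'(r)=2Mr+3CM_0\varepsilon^{\beta-1}r^2+\tfrac{4CM_0^2\varepsilon^{2\beta}}{\alpha}r^3 .
\]
The basic pointwise tool is $e_r^{\alpha}\le r^2 e_r^{\alpha-1}$ on $B_r$, which converts $e_r^\alpha$-weighted integrals of $u^2$ into $r^2H(r)$.

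\textbf{Step 1: the derivative $I'(r)$.} Apply Lemma \ref{lem:diffF} with $G=|\nabla u|^2$ and with $G=V_\varepsilon u^2$.

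For the term $\int x\cdot\nabla|\nabla u|^2\,e_r^\alpha$ I will do a Rellich-type integration by parts. I write $x_j\partial_j(\partial_iu)^2=2x_j\partial_iu\,\partial_i\partial_ju$, move $\partial_i$ off, and use $\nabla e_r^\alpha=-2\alpha x e_r^{\alpha-1}$. No boundary terms arise, since $e_r^\alpha$ vanishes on $\partial B_r$. This should give
\[
D'=\frac{2\alpha+n-2}{r}D+\frac{4\alpha}{r}\int_{B_r}(x\cdot\nabla u)^2e_r^{\alpha-1}-\frac{2}{r}\int_{B_r}(x\cdot\nabla u)\Delta u\,e_r^\alpha .
\]
Then I substitute $\Delta u=(V_\varepsilon-f_\varepsilon)u$ from \eqref{eq:moll-eq}. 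The resulting $V_\varepsilon u(x\cdot\nabla u)$ term cancels against the corresponding term in $L'$. What remains is
\[
I'=\frac{2\alpha+n-2}{r}I+\frac{2}{r}L+\frac1r\int_{B_r}(x\cdot\nabla V_\varepsilon)u^2e_r^\alpha+\frac{4\alpha}{r}\int_{B_r}(x\cdot\nabla u)^2e_r^{\alpha-1}+\frac2r\int_{B_r}f_\varepsilon u(x\cdot\nabla u)e_r^\alpha .
\]

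\textbf{Step 2: completing the square, the key step.} This is where Davey's idea enters, and it is the step I expect to be the real obstacle. The $f_\varepsilon$ term has no sign, and a crude bound would cost a factor of $\nabla u$. So I set
\[
w:=x\cdot\nabla u+\frac{f_\varepsilon e_r}{4\alpha}u .
\]
Then
\[
4\alpha\int(x\cdot\nabla u)^2e_r^{\alpha-1}+2\int f_\varepsilon u(x\cdot\nabla u)e_r^\alpha=4\alpha\int w^2e_r^{\alpha-1}-\frac1{4\alpha}\int f_\varepsilon^2u^2e_r^{\alpha+1}.
\]
Moreover $I_0=2\alpha\int uw\,e_r^{\alpha-1}-\tfrac12E$. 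With $J:=2\alpha\int uw\,e_r^{\alpha-1}$, Lemma \ref{lem:I0 Simple} gives $I=J+\tfrac E2$ and $I-E=J-\tfrac E2$. Hence $I(I-E)=J^2-E^2/4\le J^2$.

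In $\mathcal N'=I'/H-IH'/H^2$, the main part then becomes
\[
\frac{1}{\alpha rH^2}\Bigl(4\alpha^2H\int w^2e_r^{\alpha-1}-I(I-E)\Bigr)\ge\frac{1}{\alpha rH^2}\Bigl(4\alpha^2H\int w^2e_r^{\alpha-1}-J^2\Bigr)\ge0 ,
\]
by Cauchy--Schwarz. The sign bookkeeping in this identity is the thing I will check most carefully.

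\textbf{Step 3: the error terms.} Each leftover term is bounded using Lemma \ref{lem:moll}:
\begin{itemize}
\item Since $|L|\le Mr^2H$, we get $\frac{2L}{rH}\ge-2Mr$.
\item Since $|x\cdot\nabla V_\varepsilon|\le CM_0\varepsilon^{\beta-1}r$, the $\nabla V_\varepsilon$ term is at least $-CM_0\varepsilon^{\beta-1}r^2$.
\item Since $e_r^{\alpha+1}\le r^4e_r^{\alpha-1}$ and $|f_\varepsilon|\le CM_0\varepsilon^\beta$, the square remainder is at least $-\frac{CM_0^2\varepsilon^{2\beta}}{\alpha}r^3$.
\end{itemize}
All three bounds are valid in $B_{R-\varepsilon}$, where Lemma \ref{lem:moll} applies. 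Summing them gives $\mathcal N'(r)\ge-P'(r)$ after adjusting $C$. Therefore $\tilde{\mathcal N}=\mathcal N+P$ is non-decreasing.
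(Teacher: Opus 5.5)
Your proposal is correct and follows essentially the same route as the paper: the Rellich-type formula for $D'$, the cancellation of the $V_\varepsilon u(x\cdot\nabla u)$ terms in $I'$, the completed square $x\cdot\nabla u+\frac{e_r}{4\alpha}f_\varepsilon u$ with $J\propto 2I-E$ so that $I(I-E)=J^2-E^2/4$ (up to your normalization of $J$), Cauchy--Schwarz, and the same three error bounds via $e_r^{\alpha}\le r^2e_r^{\alpha-1}$. Your sign bookkeeping checks out. Dropping $E^2/4$ directly, as you do, is equivalent to the paper keeping the term $\frac{E^2}{4\alpha r}\ge 0$ and then discarding it.
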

\begin{proof}
	Apply \eqref{eq:diff-formula} with $G=|\nabla u|^2$ to have
	\begin{equation}\label{eq:Dprime-start}
		D'(r)=\frac{2\alpha+n}{r}D(r)+\frac{1}{r}\int_{B_r}x\cdot\nabla(|\nabla u|^2)\,e_r^\alpha\,dx.
	\end{equation}
	Direct calculation  yields that
	\begin{align}\label{eq:xgrad-grad2}
		\frac{1}{r}\int_{B_r}x\cdot\nabla(|\nabla u|^2)\,e_r^\alpha
		&=\frac{2}{r} \int_{B_r} \sum_{i,j}u_{ij}u_i x_j\,e_r^\alpha \nonumber \\
		&=-\frac{2}{r}\int_{B_r}x\cdot\nabla u\,\Delta u\,e_r^\alpha
		-\frac{2}{r}\int_{B_r}|\nabla u|^2 e_r^\alpha
		+\frac{4\alpha}{r}\int_{B_r}(x\cdot\nabla u)^2 e_r^{\alpha-1}.
	\end{align}
It follows from  \eqref{eq:moll-eq}, \eqref{eq:Dprime-start}--\eqref{eq:xgrad-grad2} that
	\begin{equation}\label{eq:Dprime}
		\begin{aligned}
			D'(r)
			&=\frac{2\alpha+n-2}{r}D(r)
			+\frac{4\alpha}{r}\int_{B_r}(x\cdot\nabla u)^2 e_r^{\alpha-1}\\
			&\quad
			-\frac{2}{r}\int_{B_r}x\cdot\nabla u\,V_\varepsilon u\,e_r^\alpha
			+\frac{2}{r}\int_{B_r}x\cdot\nabla u\,f_\varepsilon u\,e_r^\alpha .
		\end{aligned}
	\end{equation}
	
	Similarly, applying \eqref{eq:diff-formula} to $G=V_\varepsilon u^2$ yields that
	\begin{equation}\label{eq:Lprime}
		L'(r)=\frac{2\alpha+n}{r}L(r)
		+\frac{1}{r}\int_{B_r}x\cdot\nabla V_\varepsilon\,u^2\,e_r^\alpha
		+\frac{2}{r}\int_{B_r}V_\varepsilon u\,x\cdot\nabla u\,e_r^\alpha .
	\end{equation}
	
We combine \eqref{eq:Dprime} and \eqref{eq:Lprime} to get
	\begin{equation}\label{eq:Iprime}
		\begin{aligned}
			I'(r)=D'(r)+L'(r)
			&=\frac{2\alpha+n-2}{r}\bigl(D(r)+L(r)\bigr)+\frac{2}{r}L(r)
			+\frac{4\alpha}{r}\int_{B_r}(x\cdot\nabla u)^2 e_r^{\alpha-1}\\
			&\quad
			+\frac{2}{r}\int_{B_r}x\cdot\nabla u\,f_\varepsilon u\,e_r^\alpha
			+\frac{1}{r}\int_{B_r}x\cdot\nabla V_\varepsilon\,u^2\,e_r^\alpha .
		\end{aligned}
	\end{equation}
	
Next we study the derivative of $\mathcal{N}$, that is, 
\begin{align}
    \mathcal{N}'=\frac{I'H-H'I }{H^2}.
\end{align}
From \eqref{eq:Hprime} and (\ref{eq:Iprime}), we get
	\begin{equation}\label{eq:H2Nprime-start}
		\begin{aligned}
			H^2 \mathcal{N}'
			&=H\Bigl(\frac{2\alpha+n-2}{r}(D+L)+\frac{2}{r}L+\frac{4\alpha}{r}\!\int_{B_r}(x\!\cdot\!\nabla u)^2 e_r^{\alpha-1}
			+\frac{2}{r}\!\int_{B_r}x\!\cdot\!\nabla u\,f_\varepsilon u\,e_r^\alpha
			\\
			&\quad+\frac{1}{r}\!\int_{B_r}x\!\cdot\!\nabla V_\varepsilon\,u^2 e_r^\alpha\Bigr)
			-\frac{2\alpha+n-2}{r}HI-\frac{1}{\alpha r}(I-E)I\\
			&=\frac{2}{r}HL
			+H\Bigl(\frac{4\alpha}{r}\!\int_{B_r}(x\!\cdot\!\nabla u)^2 e_r^{\alpha-1}
			+\frac{2}{r}\!\int_{B_r}x\!\cdot\!\nabla u\,f_\varepsilon u\,e_r^\alpha
			+\frac{1}{r}\!\int_{B_r}x\!\cdot\!\nabla V_\varepsilon\,u^2 e_r^\alpha\Bigr) \\
			&\quad -\frac{1}{\alpha r}(I-E)I.
		\end{aligned}
	\end{equation}
	We introduce a function
	\[
	\Phi:=x\cdot\nabla u+\frac{e_r}{4\alpha}f_\varepsilon u.
	\]
	%Then 
	%\[
	%\int_{B_r}\Phi^2 e_r^{\alpha-1}
	%=\int_{B_r}(x\cdot\nabla u)^2 e_r^{\alpha-1}
	%+\int_{B_r}\frac{1}{2\alpha}(x\cdot\nabla u)\,f_\varepsilon u\,e_r^\alpha
	%+\int_{B_r}\frac{1}{16\alpha^2}f_\varepsilon^2u^2 e_r^{\alpha+1}.
	%\]
	Then it holds that
	\begin{equation}\label{eq:cross-term}
		\int_{B_r}(x\cdot\nabla u)\,f_\varepsilon u\,e_r^\alpha
		=
		2\alpha\int_{B_r}\Phi^2 e_r^{\alpha-1}
		-2\alpha\int_{B_r}(x\cdot\nabla u)^2 e_r^{\alpha-1}
		-\frac{1}{8\alpha}\int_{B_r}f_\varepsilon^2u^2 e_r^{\alpha+1}.
	\end{equation}
Plugging \eqref{eq:cross-term} into \eqref{eq:H2Nprime-start} gives that
	\begin{equation}\label{eq:H2Nprime-mid}
		\begin{aligned}
			H^2\mathcal{N}'
			&=\frac{2HL}{r}+\frac{4\alpha H}{r}\int_{B_r}\Phi^2 e_r^{\alpha-1}
			-\frac{H}{4\alpha r}\int_{B_r}f_\varepsilon^2u^2 e_r^{\alpha+1}
			+\frac{H}{r}\int_{B_r}x\cdot\nabla V_\varepsilon\,u^2 e_r^\alpha \\ 
			&\quad -\frac{1}{\alpha r}(I-E)I.
		\end{aligned}
	\end{equation}
Next we introduce
	\[
	J(r):=\int_{B_r}u\Phi\,e_r^{\alpha-1}
	=\int_{B_r}u(x\cdot\nabla u)e_r^{\alpha-1}
	+\frac{1}{4\alpha}\int_{B_r}f_\varepsilon u^2 e_r^\alpha.
	\]
	From Lemma \ref{lem:I0 Simple},   (\ref{notations-1}) and (\ref{notations-3}), we have
	\[
	J(r)=\frac{I-E}{2\alpha}+\frac{E}{4\alpha}=\frac{2I-E}{4\alpha}.
	\]
	Then we also have
	\[
	(I-E)I=\frac{(2I-E)^2}{4}-\frac{E^2}{4}
	=16\alpha^2J^2-\frac{E^2}{4}.
	\]
That is,
	\begin{equation}\label{eq:IEI}
		-\frac{1}{\alpha r}(I-E)I=-\frac{4\alpha}{r}J^2+\frac{E^2}{4\alpha r}.
	\end{equation}
Substituting \eqref{eq:IEI} in \eqref{eq:H2Nprime-mid} gives that
	\begin{equation}\label{eq:H2Nprime-square}
		\begin{aligned}
			H^2 \mathcal{N}'
			&=\frac{2}{r}HL
			+\frac{4\alpha}{r}\Bigl(H\int_{B_r}\Phi^2 e_r^{\alpha-1}-J^2\Bigr)
			+\frac{1}{4\alpha r}\Bigl(E^2-H\int_{B_r}f_\varepsilon^2u^2 e_r^{\alpha+1}\Bigr)\\
			&+\frac{H}{r}\int_{B_r}x\cdot\nabla V_\varepsilon\,u^2 e_r^\alpha .
		\end{aligned}
	\end{equation}
The application of  Cauchy-Schwarz inequality yields that
	\[
	H\int_{B_r}\Phi^2 e_r^{\alpha-1}-J^2
	=
	  \int_{B_r}u^2 e_r^{\alpha-1} \int_{B_r}\Phi^2 e_r^{\alpha-1}-\Bigl(\int_{B_r}u\Phi\,e_r^{\alpha-1}\Bigr)^2
	\ge 0.
	\]
With aid of the last inequality and \eqref{eq:H2Nprime-square}, we derive that
	\begin{equation}\label{eq:H2Nprime-lower}
		H^2 \mathcal{N}'
		\ge \frac{2HL}{r}
		-\frac{H}{4\alpha r}\int_{B_r}f_\varepsilon^2u^2 e_r^{\alpha+1}
		+\frac{H}{r}\int_{B_r}x\cdot\nabla V_\varepsilon\,u^2 e_r^\alpha .
	\end{equation}
	
	Thanks to Lemma \ref{lem:moll}, we have $\|f_\varepsilon\|_{L^\infty}^2\le C M_0^2\varepsilon^{2\beta}$ and
\begin{align}
	|L(r)|
	\le \|V_\varepsilon\|_{L^\infty}\int_{B_r}u^2 e_r^\alpha
	\le  Mr^2 \,H(r).
    \label{mono-11}
	\end{align}
	Note that $e_r^{\alpha+1}\le r^4 e_r^{\alpha-1}$ and $e_r^\alpha\le r^2 e_r^{\alpha-1}$. Thus, we obtain
	\begin{align}
	\left|\frac{H}{4\alpha r}\int_{B_r}f_\varepsilon^2u^2 e_r^{\alpha+1}\right|
	\le \frac{C M_0^2\varepsilon^{2\beta}r^4}{4\alpha r}H^2,
	\qquad
	\left|\frac{H}{r}\int_{B_r}x\cdot\nabla V_\varepsilon\,u^2 e_r^\alpha\right|
	\le C M_0\varepsilon^{\beta-1}r^2 H^2.
    \label{mono-12}
	\end{align}
	Therefore, the combination of \eqref{eq:H2Nprime-lower}, (\ref{mono-11}), and (\ref{mono-12})  implies
	\begin{equation}\label{eq:Nprime}
        \mathcal{N}'(r)\ge -2Mr - C M_0r^2\varepsilon^{\beta-1}-\frac{C M_0^2 r^3\varepsilon^{2\beta}}{4\alpha}.
		\qquad
	\end{equation}
   Set $P(r):=Mr^2+CM_0\varepsilon^{\beta-1}r^3+\frac{CM_0^2\varepsilon^{2\beta}}{\alpha}r^4.$
    	%Let
	%\[
	%a_1=2M,
	%\qquad
	%a_2=C K\varepsilon^{\beta-1},
	%\qquad
	%a_3=\frac{C K^2\varepsilon^{2\beta}}{4\alpha},
	%\]
	%and define
	%\[
	%P(r)=\frac{a_1}{2}r^2+\frac{a_2}{3}r^3+\frac{a_3}{4}r^4,
	%\qquad
	%\widetilde N(r):=N(r)+P(r).
	%\]
	Hence \eqref{eq:Nprime} verifies the conclusion of the proposition.
    \end{proof}

    Thanks to the monotonicity results of frequency functions in the Proposition \ref{pro-1}, we derive the three-ball inequality in Theorem \ref{thm:threeballs-I-opt-correct}.
\begin{proof}[Proof of Theorem \ref{thm:threeballs-I-opt-correct}]
	It follows from \eqref{eq:Hprime} and definition of the $\mathcal{N}$ that
	\begin{align*}
	\frac{H'}{H}
	&=\frac{2\alpha+n-2}{r}+\frac{I-E}{\alpha r H} \nonumber \\
	&=\frac{2\alpha+n-2}{r}+\frac{1}{\alpha r}\bigl[\mathcal{N}+P\bigr]-\frac{P}{\alpha r}-\frac{E}{\alpha r H}.
	\end{align*}
From Lemma \ref{lem:moll} and (\ref{notations-3}), we know
	\[
	\left|\frac{E}{\alpha r H}\right|
	\le \frac{1}{\alpha r H}\int_{B_r}|f_\varepsilon|u^2 e_r^\alpha
	\le \frac{C M_0\varepsilon^\beta r^2 H}{\alpha r H}
	=\frac{C M_0\varepsilon^\beta}{\alpha}r.
	\]
Therefore, we obtain
	\begin{align}
		\frac{H'}{H}
		\ge
		\frac{2\alpha+n-2}{r}+\frac{1}{\alpha r}\tilde{\mathcal{N}}(r)-\frac{P(r)}{\alpha r}-\frac{C M_0\varepsilon^\beta}{\alpha}r,\label{lower-1}\\
		\frac{H'}{H}
		\le
		\frac{2\alpha+n-2}{r}+\frac{1}{\alpha r}\tilde {\mathcal{N}}(r)-\frac{P(r)}{\alpha r}+\frac{C M_0\varepsilon^\beta}{\alpha}r
        \label{upper-1}.
	\end{align}
	
	Integrating the lower bound in \eqref{lower-1} from $2r_2$ to $r_3$ and using the monotonicity of  $\tilde{\mathcal{N}}(r)$ and the definition of $P(r)$, we get
	\begin{equation}\label{eq:lower-int}
		\begin{aligned}
			\log\frac{H(r_3)}{H(2r_2)}
			&=\int_{2r_2}^{r_3}\frac{H'}{H}\,dr\\
			&\ge \int_{2r_2}^{r_3}\Bigl[\frac{2\alpha+n-2}{r}+\frac{1}{\alpha r}\tilde{\mathcal{N}}(2r_2)-\frac{P(r)}{\alpha r}-\frac{C M_0\varepsilon^\beta}{\alpha}r\Bigr]\,dr\\
			&=\Bigl[2\alpha+n-2+\frac{\tilde{\mathcal{N}}(2r_2)}{\alpha}\Bigr]\log\frac{r_3}{2r_2}
			-\Bigl(\frac{CM}{\alpha}+\frac{C M_0\varepsilon^\beta}{\alpha}\Bigr)r_3^2
			-\frac{CM_0\varepsilon^{\beta-1}}{\alpha}r_3^3
			-\frac{CM_0^2\varepsilon^{2\beta}}{\alpha^2}r_3^4.
		\end{aligned}
	\end{equation}
	Similarly, integrating the upper bound in \eqref{upper-1} from $r_1$ to $2r_2$ and using the monotonicity of $\tilde {\mathcal{N}}$, we obtain
	\begin{align}\label{eq:upper-int}
		\log\frac{H(2r_2)}{H(r_1)}
		&=\int_{r_1}^{2r_2}\frac{H'}{H}\,dr \nonumber \\
		&\le 
		\Bigl[2\alpha+n-2+\frac{\tilde{\mathcal{N}}(2r_2)}{\alpha}\Bigr]\log\frac{2r_2}{r_1}
		+\frac{C M_0\varepsilon^\beta}{2\alpha}(2r_2)^2.
	\end{align}
	Combining the inequality \eqref{eq:lower-int}--\eqref{eq:upper-int} yields that
		\begin{equation}\begin{aligned} \label{eq:bridge}
		\frac{\log\!\Bigl[\frac{H(2r_2)}{H(r_1)}\Bigr]-\frac{C K\varepsilon^\beta}{2\alpha}(2r_2)^2}{\log\!\Bigl[\frac{2r_2}{r_1}\Bigr]}
		&\le 2\alpha+n-2+\frac{\tilde{\mathcal{N}}(2r_2)}{\alpha} \\
		&\le \frac{\log\!\Bigl[\frac{H(r_3)}{H(2r_2)}\Bigr]+\Bigl(\frac{CM}{\alpha}+\frac{C M_0\varepsilon^\beta}{\alpha}\Bigr)r_3^2
			+\frac{CM_0\varepsilon^{\beta-1}}{\alpha}r_3^3
	+\frac{CM_0^2\varepsilon^{2\beta}}{\alpha^2}r_3^4}{\log\!\Bigl[\frac{r_3}{2r_2}\Bigr]}.
	\end{aligned}
    	\end{equation}
We want to get ride of the weight function $e_r(x)$. Let 	
$h(r):=\int_{B_r}u(x)^2\,dx.$
	Since $0\le e_r\le r^2$ on $B_r$, it is true that
	\begin{equation}\label{eq:Hlehr}
		H(r)=\int_{B_r}u^2 e_r^{\alpha-1}\le r^{2(\alpha-1)}h(r).
	\end{equation}
	We also have $e_\rho=\rho^2-|x|^2\ge \rho^2-r^2$ for $x\in B_r$ for $\rho>r$.  Hence
	\begin{equation}\label{eq:Hrhole}
		H(\rho)=\int_{B_\rho}u^2 e_\rho^{\alpha-1}\ge \int_{B_r}u^2 e_\rho^{\alpha-1}\ge (\rho^2-r^2)^{\alpha-1}h(r).
	\end{equation}
	Taking $\rho=2r_2$ and $r=r_2$ gives that
	\[
	H(2r_2)\ge (3r_2^2)^{\alpha-1}h(r_2)=\Bigl(\frac{3}{4}\Bigr)^{\alpha-1}(2r_2)^{2(\alpha-1)}h(r_2).
	\]
	From \eqref{eq:Hlehr} and \eqref{eq:Hrhole},  we obtain that
	\begin{align}
		\log\frac{H(r_3)}{H(2r_2)}
		&\le \log\frac{h(r_3)}{h(r_2)}+(\alpha-1)\Bigl[2\log\frac{r_3}{2r_2}+\log\frac{4}{3}\Bigr],\label{eq:log1}\\
		\log\frac{H(2r_2)}{H(r_1)}
		&\ge \log\frac{h(r_2)}{h(r_1)}+(\alpha-1)\Bigl[2\log\frac{2r_2}{r_1}-\log\frac{4}{3}\Bigr].\label{eq:log2}
	\end{align}
	Inserting the estimates \eqref{eq:log1}--\eqref{eq:log2} into \eqref{eq:bridge}, we
	 arrive at the three-ball inequality
	\begin{align}\label{eq:threeball-h}
		h(r_2)&\le
		\exp\left\{
		(\alpha-1)\log\frac{4}{3}
		+\frac{C M_0\varepsilon^\beta}{\alpha}R^2
		+\frac{M}{\alpha}R^2
		+\frac{C M_0\varepsilon^{\beta-1}}{\alpha}R^3
		+\frac{C M_0^2\varepsilon^{2\beta}}{\alpha^2}R^4
		\right\}\nonumber \\
        &\quad  \times h^{\frac{\gamma_2}{\gamma_1+\gamma_2}}(r_1)\,h^{\frac{\gamma_1}{\gamma_1+\gamma_2}}(r_3),
	\end{align}
    where \[
	\gamma_1:=\log\frac{2r_2}{r_1},
	\qquad
	\gamma_2:=\log\frac{r_3}{2r_2}.
	\]

    Since $\|V\|_{C^{0, \beta}}=\max\{ M, M_0\}\geq 1$. Without loss of generality,  let  $M_0\leq M$. Assume $R>1$. It is obvious that 
$\frac{C M \varepsilon^\beta}{\alpha}R^2
		<\frac{M}{\alpha}R^2
		<\frac{C M \varepsilon^{\beta-1}}{\alpha}R^3$
    as $0<\beta<1$ and $\varepsilon$ is small.
 We choose
$ \varepsilon= \Bigl(\frac{\alpha}{MR}\Bigr)^{\frac{1}{\beta+1}}$.
	%then
	%\[
	%\varepsilon^{\beta-1}
	%=\min\left\{\Bigl(\frac{R}{2}\Bigr)^{\beta-1},\ \Bigl(\frac{\alpha}{MR}\Bigr)^{\frac{\beta-1}{\beta+1}}\right\},
	%\qquad
	%\varepsilon^{2\beta}\le \Bigl(\frac{\alpha}{MR}\Bigr)^{\frac{2\beta}{\beta+1}}.
	%\]
	Thus,
	\[
	\frac{CMR^3}{\alpha}\,\varepsilon^{\beta-1}
	\le C R^{\frac{4+2\beta}{\beta+1}}\,M^{\frac{2}{\beta+1}}\,\alpha^{-\frac{2}{\beta+1}},
	\] 
	%CMR^3\alpha^{-1}\Bigl(\frac{MR}{2}\Bigr)^{\frac{1-\beta}{1+\beta}}
	and
	\[
	\frac{CM^2R^4}{\alpha^2}\,\varepsilon^{2\beta}
	\le
	%CM^2R^4\alpha^{-2}\Bigl(\frac{\alpha}{MR}\Bigr)^{\frac{2\beta}{\beta+1}}
	%=
	C R^{\frac{4+2\beta}{\beta+1}}\,M^{\frac{2}{\beta+1}}\,\alpha^{-\frac{2}{\beta+1}}.
	\]
	It is reduced to to minimize the exponent $\alpha+C R^{\frac{4+2\beta}{\beta+1}}\,M^{\frac{2}{\beta+1}}\,\alpha^{-\frac{2}{\beta+1}}$ in the coefficient of the three-ball inequality (\ref{eq:threeball-h}).
	%Since $\frac{MR^2}{\alpha}\leq C  R^{\frac{4+2\beta}{\beta+1}}\,M^{\frac{2}{\beta+1}}\,\alpha^{-\frac{2}{\beta+1}}$ from the fact that $0<\beta\leq 1$, 
    In order to achieve the minimum, we take 
$\alpha = M^{\frac{2}{\beta+3}}\,R^{\frac{4+2\beta}{\beta+3}}.$
	Therefore, the three-ball inequality with the minimum exponent in (\ref{eq:threeball-h}) is given as 
    	\begin{align*}
		h(r_2)\le
		\exp\left\{CM^{\frac{2}{\beta+3}}R^{\frac{4+2\beta}{\beta+3}}
		\right \}
h(r_1)^{\theta}\,h(r_3)^{1-\theta},
	\end{align*}
    where $\theta=\frac{\gamma_2}{\gamma_1+\gamma_2}=\frac{\log \frac{r_3}{2r_2}}{\log \frac{r_3}{r_1} }, $
     which completes the proof of the Theorem.
	\end{proof}

    The vanishing order of solutions follows from the quantitative three-ball inequality.

      \begin{proof}[Proof of Corollary \ref{thm:vanishing-I-correct}]
Let $r_1=r, r_2=1, r_3=4$ and $R=10$
    	in (\ref{eq:threeballs-I-final}). 
    From the assumption of $u$ in (\ref{eq:assump-norm}), we obtain that
    	\[
    	1\le \|u\|_{L^2(B_r)}^{\theta}\,(C_0^2|B_{4}|)^{1-\theta}\,
    	\exp\!\left\{\,C\,M^{\frac{2}{\beta+3}}\,10^{\frac{4+2\beta}{\beta+3}}\right\}.
    	\]
    	Since $\theta^{-1}=\frac{\log 4-\log r}{\log 2}$, we can show that \begin{align}
    	    \|u\|_{L^2(B_r)}\ge C\,r^{C(1+M^{\frac{2}{\beta+3}})},
    	\end{align}
        which implies the upper bound  $CM^{\frac{2}{\beta+3}}$ for the vanishing order at origin. Since the three-ball inequality is translation invariant, we can show the same upper bound of vanishing order in $B_{1/2}.$
    \end{proof}

\section{Three-ball inequality for elliptic equations with drift term}\label{nabla term and lifting}

    In this section, we study the quantitative unique continuation for elliptic equations with drift term,\begin{equation}\label{eq:mainPDE}
    	-\Delta u + W\cdot \nabla u = 0 \qquad \text{in } B_R\subset\mathbb{R}^n,
    \end{equation}
    where $W\in C^{0,\beta_0}(B_R;\mathbb{R}^n)$ is H\"older continuous.
    Usually, it is more challenging to study unique continuation or quantitative unique continuation for the gradient term $W(x)$. 
    %See the study of  strong unique continuation property for critical Lebesgue space of the gradient term cite{W}, {T}, which appears to have more difficulty  than the study of critical Lebesgue space for the potentials. 
    We can not find the sharp growth power for the solutions in (\ref{eq:mainPDE}) using the weighted frequency function directly as (\ref{frequency-2}) in the last section. Instead, we adopt  a lifting argument to incorporate the regular parts of gradient term into the leading coefficients. In the lifting arguments, we need to make use of the second order derivative of $W_\varepsilon$, which is different from the section 2 where the first order derivative of $V_\varepsilon$ is  used.
    
  We still do regularization argument for $W(x)$.
 Let $W_\varepsilon(x)=\rho_\varepsilon*W$ be the standard mollification. Then
    $W_\varepsilon\in C^\infty(B_{R-\varepsilon})$ and the equation \eqref{eq:mainPDE} can be written as
    \begin{equation}\label{eq:mollifiedPDE}
    	-\Delta u + W_\varepsilon\cdot \nabla u = g_\varepsilon\cdot \nabla u
    	\qquad\text{in }B_{R-\varepsilon},
    \end{equation}
    where $g_\varepsilon:=W_\varepsilon-W$.
    Assume
    $\|W\|_{C^{0,\beta_0}(B_R)}\le K$.
   As the arguments in Lemma \ref{lem:moll},   it holds that 
    \begin{equation}\label{eq:mollEst}
    	|W_\varepsilon|\le K,\qquad
    	|\nabla W_\varepsilon|\le CK\,\varepsilon^{\beta_0-1},\qquad
        |\nabla^2 W_\varepsilon|\le CK\,\varepsilon^{\beta_0-2},\qquad
    	|g_\varepsilon|\le CK\,\varepsilon^{\beta_0}
    \end{equation}
in $B_{R-\varepsilon}$ for a constant $C(n)$. 
    Set the vector $I=(1,\dots,1)\in\mathbb{R}^n$ and
    \begin{align}\label{BBB-1}
    B:=\|W_\varepsilon\|_{C^1(B_{R-\varepsilon})}+1.
    \end{align}
We rewrite \eqref{eq:mollifiedPDE} in the following way
\begin{equation}\label{eq:firstRewrite}
    	-\Delta u
+\Bigg[\frac{W_\varepsilon+\bigl(\|W_\varepsilon\|_{C^1}+1\bigr)I}{LB}\Bigg]\cdot \nabla(LBu)
    	=
    	g_\varepsilon\cdot \nabla u + B\,I\cdot \nabla u,
    \end{equation}
    where $L$ is a large positive constant to be  determined later.
    Denote
    \begin{equation}\label{eq:defb}
    	b:=\frac{W_\varepsilon+\bigl(\|W_\varepsilon\|_{C^1}+1\bigr)I}{LB},
    	\qquad
    	\hat{u}(x,t):=e^{LBt}\,u(x).
    \end{equation}
     By the fact that $\partial_{tt}\hat{u}=L^2B^2\hat{u}$, the equation \eqref{eq:firstRewrite} turns into
    \begin{equation}\label{eq:Veq1}
    	-\partial_{tt}\hat{u}-\Delta_x \hat{u} + b\cdot \nabla_x(\partial_t \hat{u})
    	=
    	g_\varepsilon\cdot \nabla_x \hat{u} + B\,I\cdot \nabla_x \hat{u} - L^2B^2\hat{u}.
    \end{equation}
    
The left hand side of the last equation can be written as
    \begin{equation}\label{eq:divForm1}
    	-\partial_{tt}\hat{u}-\Delta_x \hat{u} + b\cdot \nabla_x(\partial_t \hat{u})
    	=
    	-\operatorname{div}_{x,t}\!\bigl(A_1\nabla_{x,t}\hat{u}\bigr)
    	-\frac{{\rm div}_x b}{2}\,\partial_t \hat{u},
    \end{equation}
    where the $(n+1)\times(n+1)$ matrix $A_1$ is given as
    \begin{equation}\label{eq:A1def}
    	A_1:=
    	\begin{pmatrix}
    		I_{n\times n} & -\frac{b}{2}^T\\
    		-\frac{b}{2} & 1
    	\end{pmatrix}.
    \end{equation}
    Indeed, we can show that $$-{\rm div}_{x,t}(A_1\nabla_{x,t}\hat{u})=-\partial_{tt}\hat{u}-\Delta_x\hat{u}+ b\cdot\nabla_x(\partial_t\hat{u})
    +\frac{{\rm div}_x b}{2}\partial_t\hat{u}.$$
    We want $A_1$ to be uniformly elliptic.
    For $\xi=(\xi_1,\dots,\xi_{n+1})\in\mathbb{R}^{n+1}$,
    \begin{align}\label{eq:A1ell}
    	\xi\cdot A_1\,\xi^T
    	&=\xi_1^2+\cdots+\xi_{n+1}^2-\sum_{k=1}^n b_k\,\xi_k\,\xi_{n+1}\\
    	&\ge |\xi|^2-\Bigl(\sum_{k=1}^n b_k^2\Bigr)^{1/2}
    	\Bigl(\sum_{k=1}^n \xi_k^2\Bigr)^{1/2}\xi_{n+1}\notag\\
    	&\ge |\xi|^2-|b|\big(  \sum_{k=1}^n\xi_k^2+\xi_{n+1}^2  \big)
    	=\Bigl(1-|b|\Bigr)|\xi|^2.\notag
    \end{align}
  By (\ref{BBB-1}) and \eqref{eq:defb}, 
    \begin{equation}\label{eq:bbound}
    	|b|
    	\le
\frac{\|W_\varepsilon\|_{C^1}+\bigl(\|W_\varepsilon\|_{C^1}+1\bigr)\sqrt{n}}
    	{LB}
    	\le \frac{\sqrt{n}+1}{L}.
    \end{equation}
    Taking $L$ to be so large that $|b|\le \frac{1}{2}$,  by \eqref{eq:A1ell},  we obtain that
    \[
    \xi\cdot A_1 \,\xi^T \ge \frac12|\xi|^2.
    \]
    Thus, $A_1$ is uniformly elliptic. Moreover, we can have $|\nabla a_{ij}|\le C|\nabla b|\le 1$ by  taking $L$ large. 
    
  Since  $\partial_t\hat{u}=LB\hat{u}$, it follows from the equation \eqref{eq:Veq1} that
    \begin{equation}\label{eq:VeqDiv}
-\operatorname{div}_{x,t}\!\bigl(A_1\nabla_{x,t}\hat{u}\bigr)
    	=
    	g_{1,\varepsilon}\cdot \nabla_{x,t}\hat{u}
    	+ B\,I_1\cdot \nabla_{x,t}\hat{u}
    	- L^2B^2\hat{u}
    	+\frac{{\rm div}_x b}{2}\,LB\hat{u},
    \end{equation}
    where we use the natural embedding
    \[
    g_{1,\varepsilon}:=(g_\varepsilon,0)\in\mathbb{R}^{n+1},
    \qquad
    I_1:=(I,0)\in\mathbb{R}^{n+1}.
    \]
    
   We perform further lifting arguments to absorb $B\,I_1\cdot \nabla_{x,t}\hat{u}$ into the leading coefficients. We write
    \[
    B\,I_1\cdot \nabla_{x,t}\hat{u}
    =
    \frac{B\,I_1}{LB}\cdot \nabla_{x,t}(LB\hat{u}).
    \]
    Let
    \[
    \widehat b:=\frac{I_1}{L},
    \qquad
    \bar{u}(x,t,s):=e^{LBs}\,\hat{u}(x,t).
    \]
    Then $\partial_s \bar{u}=LB\bar{u}$ and the same lifting process as above gives the equation for $\bar{u}$:
    \begin{equation}\label{eq:WeqDiv}
    	-\operatorname{div}_{x,t,s}\!\bigl(A_2\nabla_{x,t,s}\bar{u}\bigr)
    	=
    	g_{2,\varepsilon}\cdot \nabla_{x,t,s}\bar{u}
    	-2L^2B^2\bar{u}
    	+\frac{{\rm div}_x b}{2}\,LB\bar{u},
    \end{equation}
    where we use $\partial_{ss}\bar u=L^2B^2 \bar u$, $ g_{2,\varepsilon}=(g_{1,\varepsilon},0)\in\mathbb{R}^{n+2},$
    and the $(n+2)\times(n+2)$ matrix $A_2$ takes the form
    \begin{equation}\label{eq:A2def}
    	A_2:=
    	\begin{pmatrix}
    		A_1 &  \frac{\widehat b}{2}^T\\
    		\frac{\widehat{b}}{2} & 1
    	\end{pmatrix}.
    \end{equation}
    We can show $A_2$ is uniformly elliptic by taking $L$ large.

   \medskip

  \medskip
    Next we incorporate the term $-2L^2B^2\bar{u}
    	+\frac{{\rm div}_x b}{2}\,LB\bar{u}$ in (\ref{eq:WeqDiv}) into the leading coefficients.
%\[\label{equ:second lifting} 
%-%\operatorname{div}_{x,t,s}\!\bigl(A_2\nabla_{x,t,s}\bar u\bigr)
%= f_{2,\varepsilon}\nabla_{x,t,s}\bar u
%-2L^2B^2\bar u
%+\frac{{\rm div}_x b}{2} LB\,\bar u.
%\]
Denote $\hat{B}:=\|W_\varepsilon\|_{C^2}+1$.
From \eqref{eq:mollEst}, we have $\hat{B}\le CK\varepsilon^{\beta-2}.$
It holds that
\[
\bigl(-2L^2B^2+(\operatorname{div}_x b)LB/2\bigr)\bar u
=
\frac{(\operatorname{div}_x b)LB/2+L_1\hat{B}}{L_1^2 {\hat{B}}}
\Bigl(L_1^2 {\hat{B}}\bar u\Bigr)-\bigl(L_1\hat{B}+2L^2B^2\bigl)\bar{u}.
\]
We introduce 
\[
\widetilde{u}(x,t,s,y)
=
e^{\,L_1 \sqrt{\hat{B}}y}\,\bar u(x,t,s),
\]
where \(L_1\) is a large positive constant to be determined. Then
\[
\partial_{yy}^2\widetilde{u}
=
L_1^2 {\hat{B}}\widetilde{u},
\]
and
\[
\bigl(-2L^2B^2+\,(\operatorname{div}_x b)LB/2\bigr)\widetilde{u}
=
\frac{(\operatorname{div}_x b)LB/2+L_1\hat{B}}{L_1^2 {\hat{B}}}
\partial_{yy}^2\widetilde{u}-\bigl(L_1\hat{B}+2L^2B^2\bigl)\widetilde{u}.
\]
Thus it follows from \eqref{eq:WeqDiv} that
\begin{equation}\label{eq: Final lifting}
    -\operatorname{div}_{x,t,s,y}\!\bigl(A_3\nabla_{x,t,s,y}\widetilde{u}\bigr)
=
g_{3,\varepsilon}\nabla_{x,t,s,y}\widetilde{u}-(2L^2B^2+L_1\hat{B})\widetilde{u},
\end{equation}
where
\[
A_3:=
\begin{pmatrix}
A_2 & 0\\[1ex]
0 & \dfrac{L_1\hat{B}+(\operatorname{div}_x b)LB/2}{L_1^2 {\hat{B}}}
\end{pmatrix}.
\]
Note that $\operatorname{div}_x b = \frac{\operatorname{div}_x W_{\varepsilon}}{LB}, $
%\begin{equation*}
%    \operatorname{div}_x b = \frac{\operatorname{div}_x W_{\varepsilon}}{LB}, \qquad |\operatorname{div}_x b| \leq C\frac{K\varepsilon^{\beta_0 - 1}}{LB}.
%\end{equation*}
Denote
\begin{equation*}
    a_{yy}: = \frac{L_1\hat{B}+\operatorname{div}_x W_\varepsilon/2}{L_1^2\hat{B}} %= {\color{red} \frac{L_1\hat{B}-\operatorname{div}_x W_\varepsilon/(2L^2B^2)}{L_1^2\hat{B}}}
\end{equation*}
If $L_1$ is large enough, we have
\begin{equation*}
    \frac{1}{2L_1} \leq a_{yy} \leq 1, \qquad |\nabla a_{yy}| \leq \frac{C|\nabla^2 W_{\varepsilon}|}{L_1^2 \hat{B}} \leq C,
\end{equation*}
where we used the second derivative of $W_\varepsilon$ in (\ref{eq:mollEst}).
Then $A_3$ is uniformly elliptic and $|\nabla A_3| \leq C_1$ for some universal constant $C_1$.

%Now we take \(L_1\) large such that
%\[
%|\nabla A_3|\le \Lambda
%\]
%for a universal constant $\Lambda$ and that \(A_3\) is uniformly elliptic:
%\[
%A_3x\cdot x\ge \lambda |x|^2,\qquad \forall x\in \mathbb{R}^{n+3}.
%\]

Finally, we absorb the term $-(2L^2B^2+L_1B)\widetilde{u}$ in (\ref{eq: Final lifting}).
%Using %$(2L^2B^2+L_1\hat{B}) %\widetilde{u}=\frac{2L^2B^2%+L_1B}{L_1^2L^2B^2}%(L_1^2L^2B^2\widetilde{u})$,
Let $\widetilde{\bar u}(x,t,s,y,z)=e^{i\sqrt{2L^2B^2+L_1\hat{B}}z}\widetilde{u}(x,t,s,y)$. Then we can write \eqref{eq: Final lifting} to be
\begin{equation}\label{eq: real Final lifting}
    -\operatorname{div}_{x,t,s,y,z}\!\bigl(A_4\nabla_{x,t,s,y,z}\widetilde{\bar u}\bigr)=g_{4,\varepsilon}\nabla_{x,t,s,y,z}\widetilde{\bar u},
\end{equation}
where 
\[
A_4:=
\begin{pmatrix}
A_3 & 0\\[1ex]
0 & 1
\end{pmatrix},
\]
and $g_{4, \varepsilon}:=(g_{3,\varepsilon},0)\in \mathbb{R}^{n+4}$.
Again $A_4$ is uniformly elliptic and has uniform Lipschitz upper bound.
%\textcolor{red}{
 %  Finally, let
%    \begin{equation*}
%    \widetilde{u}(x,t,s,y) = e^{iLBy} \bar u(x,t,s),
%\end{equation*}
%then
%\begin{equation*}
%    \partial_{yy}^2 \widetilde{u} = -L^2B^2 \widetilde{u}
%\end{equation*}
%and
%\begin{equation*}
%    \bigl(-2L^2B^2+\,(\operatorname{div}_x b)LB/2\bigr)\widetilde{u} = \frac{-2L^2B^2 + (\operatorname{div}_xb) LB/2}{-L^2B^2} \partial_{yy}^2 \widetilde{u}.
%\end{equation*}
%Thus it follows from \eqref{eq:WeqDiv} that
%\begin{equation}\label{eq:Final lifting}
 %   -%\operatorname{div}_{x,t,s,y}\!\bigl(A_3\nabla_{x,t,s,y}\widetilde{u}\bigr)
%=
%g_{3,\varepsilon}\nabla_{x,t,s,y}\widetilde{u}
%\end{equation}
%where
%\[
%A_3=
%\begin{pmatrix}
%A_2 & 0\\[1ex]
%0 & \dfrac{(\operatorname{div}_x b)LB/2-2L^2B^2}{-L^2B^2}
%\end{pmatrix},
%\]
%Note that
%\begin{equation*}
 %   \operatorname{div}_x b = \frac{\operatorname{div}_x W_{\varepsilon}}{LB}, \qquad |\operatorname{div}_x b| \leq C\frac{K\varepsilon^{\beta_0 - 1}}{LB}.
%\end{equation*}
%Denote
%\begin{equation*}
%    a_{yy} = \frac{(\operatorname{div}_x b)LB/2-2L^2B^2}{-L^2B^2} = 2 - \frac{\operatorname{div}_x W_{\varepsilon}}{2L^2B^2}.
%\end{equation*}
%If $L$ is large enough and we will assume that $K\varepsilon^{\beta_0} > 1$, we have
%\begin{equation*}
 %   \frac{3}{2} \leq |a_{yy}| \leq \frac{5}%{2}, \qquad |\nabla a_{yy}| \leq \frac{C|\nabla^2 W_{\varepsilon}|}{2L^2B^2} \leq \frac{C}{L^2 K \varepsilon^{\beta_0}}.
%\end{equation*}
%Then $A_3$ is uniform elliptic and $|\nabla A_3| \leq C_1$ for some universal constant $C_1$.}

Applying Proposition 4.1 in \cite{D25} to \eqref{eq: real Final lifting} to have
\begin{equation}\label{ddd-1}
\|\widetilde{\bar u}\|_{r_2}
\le
{e^{C R}}
\exp\!\left\{
\bigl[CK^2\varepsilon^{2\beta_0}R^2+C\bigr]
\bigl[1+\log \tfrac{r_3}{2r_2}\bigr]
\right\}
\|\widetilde{\bar u}\|_{r_1}^{\theta}
\|\widetilde{\bar u}\|_{r_3}^{1-\theta},
\end{equation}
where we used the fact that $\|g_{4, \varepsilon}\|_{L^\infty}\leq CK \varepsilon^{\beta_0}$,
$\|\widetilde{ \bar u}\|_r:=\|\widetilde{ \bar u}\|_{L^2(B_r)}$ for
$B_r\subset \mathbb R^{n+4}$ and 
\begin{align}
\theta=
\frac{\log r_3-\log 2r_2}
{\log r_3-\log 2r_2+{Ce^{C(r_3-r_1)}}\bigl[\log 2r_2-\log r_1\bigr]},
\label{theta-1}
\end{align}
with $C$ depending on $n$. {Note that we choose a slight different $\theta$ from \cite{D25}. This $\theta$ can also be seen from the arguments for elliptic equations with  variable leading coefficients in next section.}
Recall all the lifting arguments we have performed. It follows that
\[
\widetilde{\bar u}
=
e^{LB(s+t)+ L_1\sqrt{\hat B}y +i\sqrt{2L^2B^2+L_1\hat{B}}z}u(x).
\]
Then
\begin{equation}
 \begin{aligned}
  	& \|\widetilde{\bar u}\|_{r_2}^2=\int_{B^{x,t,s,y,z}_{r_2}} \widetilde{\bar u}^2
  	\ge e^{-4r_2LB - 2r_2L_1\sqrt{\hat{B}}}\int_{B_{r_2}} u^2(x)\,dx, \\
  	&\|\widetilde{\bar u}\|_{r_1}^2\le e^{4r_1LB + 2r_1L_1\sqrt{\hat{B}}}\int_{B_{r_1}} u^2(x)\,dx,\\
  	&\|\widetilde{\bar u}\|_{r_3}^2\le e^{4r_3LB + 2r_3L_1\sqrt{\hat{B}}}\int_{B_{r_3}} u^2(x)\,dx.
  \end{aligned}
  \label{compare-1}
  \end{equation}
  Through (\ref{ddd-1}, (\ref{compare-1}) and estimates  of $B$, $\hat{B}$ in (\ref{eq:mollEst}),
we obtain the quantitative three-ball inequality for $u$ in $\mathbb R^n$,
\begin{equation}
\|u\|_{r_2}
\le
{e^{CR}}
\exp\!\left\{
CRK\varepsilon^{\beta_0-1}
+ CRK^{1/2}\varepsilon^{\beta_0/2 - 1}+
\bigl[CK^2\varepsilon^{2\beta_0}R^2+C\bigr]
\bigl[1+\log \tfrac{r_3}{2r_2}\bigr]
\right\}
\|u\|_{r_1}^{\theta}\|u\|_{r_3}^{1-\theta}.
\end{equation}
%where
%\[
%\|u\|_r:=\|u\|_{L^2(B_r^x)}.
%\]
We can choose $R\geq 1$ and $K\varepsilon^{\beta_0}>1$. It is obvious that $RK^\frac{1}{2}\varepsilon^{\frac{\beta_0}{2}-1}\leq RK\varepsilon^{{\beta_0}-1}$.
To minimize the exponent in the last inequality, we select
\[
K\varepsilon^{\beta_0-1}=K^2\varepsilon^{2\beta_0} R.
\]
Then $\varepsilon=(RK)^{-\frac1{\beta_0+1}}$.
%and in this choice
%\[
%K\varepsilon^{\beta-1}=K^2\varepsilon^{2\beta}=K^{\frac{2}{\beta+1}},
%\]
%\[K^{1/2}\varepsilon^{\beta/2-1}
%=
%K^{\frac{3}{2(\beta+1)}}<K^{\frac{2}{\beta+1}}.
%\]
Since $0<\beta_0<1$,  we obtain the three-ball inequality
\begin{equation}\label{eq:3ballsuFinal}
    	\|u\|_{r_2}
    	\le
    	\exp\!\Bigg\{
    	CK^{\frac{2}{\beta_0+1}}R^{\frac{2}{\beta_0+1}}
    	\Bigl[
    	1+\log\!\Bigl(\frac{r_3}{2 r_2}\Bigr)
    	\Bigr]
    	\Bigg\}
    	\,
    	\|u\|_{r_1}^{\theta}\,
    	\|u\|_{r_3}^{1-\theta}
    \end{equation}
   with $\theta$ is given in (\ref{theta-1}). This completes the proof of Theorem \ref{thm:threeballs-III-final}.
\medskip

Note that the three-ball inequality in Theorem (\ref{thm:threeballs-I-opt-correct}) is rescaling invariant.
We can also prove a rescaling invariant three-ball inequality for the solutions in \eqref{eq:mainPDE-1}.
 \begin{corollary}
 \label{add-on}
 	Let $0<\beta_0< 1$ and $u$ solve (\ref{eq:mainPDE-1}) with $R\geq 1$.
					Assume $\|W\|_{C^{0,\beta_0}}\leq K$.
					Then there exists some  constants $C(n, \beta_0)$ such that for any $0<r_1<r_2<2r_2<r_3<\frac{R}{2}$,
\begin{equation}
					\|u\|_{r_2}
    	\le
    	\exp\!\Bigg\{
    	CK^{\frac{2}{\beta_0+1}}R^{{2}}
    	\Bigl[
    	1+\log\!\Bigl(\frac{r_3}{2 r_2}\Bigr)
    	\Bigr]
    	\Bigg\}
    	\,
    	\|u\|_{r_1}^{\theta}\,
    	\|u\|_{r_3}^{1-\theta},
					\end{equation}
					where $0<\theta=\frac{\log r_3-\log 2r_2}
{\log r_3-\log 2r_2+{C}(\log 2r_2-\log r_1)}<1.$
    				     
    				 \end{corollary}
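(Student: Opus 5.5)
The plan is to derive Corollary \ref{add-on} from Theorem \ref{thm:threeballs-III-final} by rescaling the equation to unit size. Let $v(x):=u(Rx)$ for $x\in B_1$. A direct computation gives
\[
-\Delta v+W_R(x)\cdot\nabla v=0\quad\text{in }B_1,\qquad W_R(x):=R\,W(Rx).
\]
So $v$ solves an equation of the form (\ref{eq:mainPDE-1}) on the ball of radius $1$.

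The first step is to bound the H\"older norm of the rescaled drift. Since $R\geq 1$ and $0<\beta_0<1$,
\[
\|W_R\|_{L^\infty(B_1)}\le RK,\qquad [W_R]_{C^{0,\beta_0}(B_1)}=R^{1+\beta_0}[W]_{C^{0,\beta_0}(B_R)}\le R^{1+\beta_0}K.
\]
Hence $\|W_R\|_{C^{0,\beta_0}(B_1)}\le 2R^{1+\beta_0}K=:K'$. Consequently
\[
K'^{\frac{2}{\beta_0+1}}\le C\,K^{\frac{2}{\beta_0+1}}R^{2},
\]
which is exactly the exponent claimed in the corollary.

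The second step is to apply Theorem \ref{thm:threeballs-III-final} to $v$ with domain radius $1$ and radii $r_i':=r_i/R$. The hypothesis $0<r_1<r_2<2r_2<r_3<\frac R2$ becomes $0<r_1'<r_2'<2r_2'<r_3'<\frac12$. Although the theorem is stated for $R\ge 1$, the radius $1$ is admissible, so its $R^{\frac{2}{\beta_0+1}}$ factor equals $1$.

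The logarithmic ratios are scale invariant: $\log\frac{r_3'}{2r_2'}=\log\frac{r_3}{2r_2}$, and likewise for $\frac{2r_2'}{r_1'}$. Moreover $r_3'-r_1'<\frac12$, so $Ce^{C(r_3'-r_1')}\le Ce^{C/2}$, which is a new constant $C$ depending only on $n$ and $\beta_0$. Replacing $Ce^{C(r_3'-r_1')}$ by this larger constant only decreases $\theta$. Since $\|v\|_{r_1'}\le\|v\|_{r_3'}$, decreasing $\theta$ only enlarges the right-hand side, so the three-ball inequality persists with the simplified $\theta$ stated in the corollary.

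The final step is to return to $u$. We have $\|v\|_{L^2(B_{r'})}=R^{-n/2}\|u\|_{L^2(B_{r})}$ for $r=Rr'$. Because $\theta+(1-\theta)=1$, the factors $R^{-n/2}$ appear to the same total power on both sides and cancel. This yields the stated inequality with exponent $CK^{\frac{2}{\beta_0+1}}R^2\bigl[1+\log\frac{r_3}{2r_2}\bigr]$, after absorbing additive constants using $K,R\ge1$.

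The main point to check is that Theorem \ref{thm:threeballs-III-final}, with its constants depending only on $(n,\beta_0)$, is legitimately applied at unit scale with the large norm $K'$. In particular, the choice $\varepsilon=(K')^{-1/(\beta_0+1)}$ in its proof must satisfy $\varepsilon<1$ and $K'\varepsilon^{\beta_0}>1$. Both follow from $K'\ge1$ being large, so I expect no genuine obstruction there.
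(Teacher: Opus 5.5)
Your proposal is correct and follows essentially the same route as the paper. The paper likewise applies Theorem \ref{thm:threeballs-III-final} at unit scale with $Ce^{C(r_3-r_1)}$ replaced by a fixed constant (which only decreases $\theta$). It then rescales via $v(x)=u(Rx)$, so that the drift $RW(Rx)$ has $C^{0,\beta_0}$ norm bounded by a multiple of $KR^{1+\beta_0}$, and uses the scale invariance of the logarithmic ratios and of $\theta$. Your proposal also makes explicit two steps the paper leaves implicit: why decreasing $\theta$ only enlarges the right-hand side (because $\|v\|_{r_1'}\le\|v\|_{r_3'}$), and the cancellation of the $R^{-n/2}$ factors.
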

\begin{proof}
Let $R=1$ in Theorem \ref{thm:threeballs-III-final}. The estimate (\ref{eq:threeballs-III-final}) implies the following three-ball inequality
\begin{equation}
					\|u\|_{r_2}
    	\le
    	\exp\!\Bigg\{
    	CK^{\frac{2}{\beta_0+1}}
    	\Bigl[
    	1+\log\!\Bigl(\frac{r_3}{2 r_2}\Bigr)
    	\Bigr]
    	\Bigg\}
    	\,
    	\|u\|_{r_1}^{\tilde{\theta}}\,
    	\|u\|_{r_3}^{1-\tilde{\theta}},
        \label{rescaling-100}
					\end{equation}
where $0<\tilde{\theta}:=\frac{\log r_3-\log 2r_2}
{\log r_3-\log 2r_2+{C_1}(\log 2r_2-\log r_1)}<\theta
=\frac{\log r_3-\log 2r_2}
{\log r_3-\log 2r_2+{C} e^{C(r_3-r_1)}(\log 2r_2-\log r_1)}<1$. We perform a rescaling argument. Let $v(x)=u(Rx)$. Then $v(x)$ satisfies 
\begin{align}
  \triangle v(x)+R\tilde{W}(x) \nabla v(x)=0\quad \mbox{in} \ B_1. 
  \label{111-v}
\end{align}
where $\tilde{W}(x):=W(Rx)$. If $\|W\|_{C^{0,\beta_0}}\leq K$, then $\|R\tilde{W}\|_{C^{0, \beta_0}}\leq K R^{1+\beta_0}$. Applying the estimates 
(\ref{rescaling-100}) to $v$ yields that
    	\begin{align}		
                \|v\|_{r_2}
    	\le
    	\exp\!\Bigg\{
    	CK^{\frac{2}{\beta_0+1}}R^2
    	\Bigl[
    	1+\log\!\Bigl(\frac{r_3}{2 r_2}\Bigr)
    	\Bigr]
    	\Bigg\}
    	\,
    	\|v\|_{r_1}^{\tilde{\theta}}\,
    	\|v\|_{r_3}^{1-\tilde{\theta}}.
        \label{rescaling-99}
					\end{align}
                    Since $\tilde{\theta} $ and $\log \frac{r_3}{2 r_2}$ are rescaling invariant values, we substitute $v(x)=u(Rx)$ into the last inequality to obtain
                    \begin{align}
    	\|u\|_{r_2}
    	\le
    	\exp\!\Bigg\{
    	CK^{\frac{2}{\beta_0+1}}R^{{2}}
    	\Bigl[
    	1+\log\!\Bigl(\frac{r_3}{2 r_2}\Bigr)
    	\Bigr]
    	\Bigg\}
    	\,
    	\|u\|_{r_1}^{\tilde{\theta}}\,
    	\|u\|_{r_3}^{1-\tilde{\theta}}.
					\end{align}         This finishes the proof of the Corollary.       
\end{proof}

The three-ball inequality in \ref{thm:threeballs-III-final} and Corollary \ref{add-on} can show the vanishing order estimates.
      \begin{proof}[Proof of Corollary \ref{thm:vanishing-III-correct}]
Let $r_1=r, r_2=1, r_3=4$ and $R=10$ in \eqref{eq:3ballsuFinal}. 
 By the normalized assumption for $u$ in \eqref{eq:assump-norm}, we can show that
    	\[
    	1\le \|u\|_{L^2(B_r)}^{\theta}\,(C_0^2|B_{4}|)^{1-\theta}\,
    	\exp\!\left\{\,C\,K^{\frac{2}{\beta_0+1}}\,10^{\frac{2}{\beta_0+1}}\right\}.
    	\]
    	It follows from (\ref{theta-1})
        that $\theta^{-1}=\frac{\log 2+C\log 2-C\log r}{\log 2}$. Hence it holds that\begin{align}
    	    \|u\|_{L^2(B_r)}\ge C\,r^{C(1+K^{\frac{2}{\beta_0+1}})}.
    	\end{align}
        This implies the upper bound  $CK^{\frac{2}{\beta_0+1}}$  for the vanishing order of $u$ at origin. Since the three-ball inequality is translation invariant,  the same upper bound of vanishing order is arrived in $B_{1/2}.$
    \end{proof}

  %  \begin{proof}
    %	Apply \eqref{eq:threeballs-III-final} with $(r_1,r_2,r_3,R,\sigma)=(r,1,10,10,2)$:
    %	\[
    %	\|u\|_{L^2(B_1)}
    %	\le \mathcal{C}\,\|u\|_{L^2(B_r)}^{\vartheta}\,\|u\|_{L^2(B_{10})}^{1-\vartheta},
    %	\]
    %	where $\mathcal{C}$ is the explicit constant from \eqref{eq:threeballs-III-final}
    %	(with these fixed radii) and $\vartheta$ is given by \eqref{eq:vartheta-final}.
    %	Using $\|u\|_{L^2(B_1)}\ge 1$ and $\|u\|_{L^2(B_{10})}\le C_0|B_{10}|^{1/2}$ yields
    	%\[
    %	\|u\|_{L^2(B_r)}\ge \mathcal{C}^{-1/\vartheta}(C_0|B_{10}|^{1/2})^{-(1-\vartheta)/\vartheta}.
    %	\]
    %The same argument as above implies the result.
   
   % \end{proof}
    	\section{Three-ball inequality for elliptic equation with variable coefficients }\label{general eqution}

       % Variable coefficient equation $-\partial_j\!\bigl(a_{ij}(x)\partial_i u\bigr)+V(x)u=0$
    	In this section, we study 
the quantitative unique continuation for 
    	\begin{equation}\label{eq:PDE}
    		-\partial_j\!\bigl(a_{ij}(x)\partial_i u\bigr)+V(x)u=0\qquad \text{in }B_R.
    	\end{equation}
The summation for the index $i, j$ is understood by the Einstein summation convention. The main ideas follow from the arguments in section 1. There are calculation technicalities to deal with variable leading coefficients.
    	Assume $A(x)=(a_{ij}(x))_{1\le i,j\le n}$ be a symmetric matrix on $B_R\subset\mathbb{R}^n$ satisfy \eqref{lips-1}.
      % Z% such that
%\begin{equation}%\label{eq:ellipticity}
    	%	\lambda |\xi|^2 \le a_{ij}(x)\xi_i\xi_j \le \Lambda |\xi|^2,
    		%\qquad \forall \xi\in\mathbb{R}^n,\ \forall x\in B_R,
    
   % 	and $A$ is Lipschitz:
    	%\begin{equation}\label{eq:LipA}
    		%\|\nabla A\|_{L^\infty(B_R)} := \max_{i,j}\|\nabla a_{ij}\|_{L^\infty(B_R)} \le L_A.
    %	\end{equation}
    	Let $V\in C^{0,\beta}(B_R)$ with $0<\beta< 1$ and \begin{equation}\label{eq:Vbounds}
    		\|V\|_{L^\infty(B_R)}\le M,\qquad [V]_{C^{0,\beta}(B_R)}\le M_0.
    	\end{equation}
   As in section 1, we want to see the role of the H\"older semi-norm $[V]_{C^{0,\beta}}$ in the estimates.
    	Fix a point $0\in B_R$. After an invertible linear change of variables
    	(depending only on $A(0)$), we may assume $A(0)=I_{n\times n}.$
    	Then by \eqref{lips-1}
\begin{equation}\label{eq:Aclose}
    		\|A(x)-I_{n\times n}\|\le L_A r
    	\end{equation}
 for $|x|\le r$.
    We perform the same mollification for $V(x)$ as Lemma \ref{lem:moll}.
    	%Let $\rho\in C_0^\infty(B_1)$ be a standard mollifier, $\rho\ge0$, $\int_{B_1}\rho=1$,
    	%and set $\rho_\varepsilon(x)=\varepsilon^{-n}\rho(x/\varepsilon)$, $V_\varepsilon=\rho_\varepsilon*V$,
    	%$f_\varepsilon:=V_\varepsilon-V$. Then the standard estimates \label{lem:mollify}
        Thus,
    		for every $0<\varepsilon\le 1$,\begin{align}\label{VVV-1}
\|V_\varepsilon\|_{L^\infty(\mathbb{R}^n)}\le M,
    		\qquad
    		\|\nabla V_\varepsilon\|_{L^\infty(\mathbb{R}^n)}\le C M_0\varepsilon^{\beta-1},
    		\qquad
\|f_\varepsilon\|_{L^\infty(\mathbb{R}^n)}\le C M_0 \varepsilon^\beta,
    	    	\end{align}
    		where $f_\varepsilon:=V_\varepsilon-V$ and  $C=C(n)$.
    	With these, wer rewrite \eqref{eq:PDE}  as
    	\begin{equation}\label{eq:PDEeps}
    -\partial_j\!\bigl(a_{ij}\partial_i u\bigr)+V_\varepsilon u=f_\varepsilon u\qquad \text{in }B_R.
    	\end{equation}
    	Define the coefficient-dependent weight 
    	\begin{equation}\label{eq:mu}
    		\nu(x):=\frac{A(x)x\cdot x}{|x|^2}=\frac{a_{ij}(x)x_ix_j}{|x|^2},\qquad x\neq 0,
    	\end{equation}
    	and set $\nu(0):= \operatorname{tr}A(0)/n = 1$ since $A(0)=I_{n\times n}$.
    		By \eqref{lips-1}, $\lambda\le \nu(x)\le \Lambda$ for all $x\neq 0$.
    		Moreover, from \eqref{eq:Aclose} and $A(0)=I_{n\times n}$,
    		\begin{equation}\label{eq:mucmp1}
    			|\nu(x)-1|\le C L_A r
    		\end{equation}
    		 for $|x|\le r$ with $C=C(n,\lambda,\Lambda)$.
    	%\end{remark}
    	
    	Fix a parameter $\alpha\ge 2$.
    	As in section 1, we define 
    \begin{equation}\label{eq:Hdef}
    		H(r):=\int_{B_r} u(x)^2\,\nu(x)\,e^{\alpha-1}_r(x)\,dx,
    	\end{equation}
        with $e_r=r^2-|x|^2.$
    	Denote the Dirichlet and potential terms as
    \begin{equation}\label{eq:DLEdef}
    		D(r):=\int_{B_r} a_{ij}(x)\,\partial_i u\,\partial_j u\,e_r^\alpha\,dx,\quad
    		L(r):=\int_{B_r} V_\varepsilon(x)\,u^2\,e_r^\alpha\,dx,\quad
    		E(r):=\int_{B_r} f_\varepsilon(x)\,u^2\,e_r^\alpha\,dx,
    	\end{equation}
    	and
    	\begin{equation}\label{eq:I_N_def}
    		I(r):=D(r)+L(r),\qquad \mathcal{N}(r):=\frac{I(r)}{H(r)}.
    	\end{equation}
    We also introduce 
 	\begin{equation}\label{eq:I0def}
    		I_0(r):=-\int_{B_r}u\,a_{ij}\partial_i u\,\partial_j\!\bigl(e_r^\alpha\bigr)\,dx.
    	\end{equation}
    	Since $\partial_j(e_r^\alpha)=-2\alpha x_j e_r^{\alpha-1}$,
    	we have
    	\begin{equation}\label{eq:I0rewrite}
    		I_0(r)=2\alpha\int_{B_r} u\,\bigl(a_{ij}\partial_i u\,x_j\bigr)\,e_r^{\alpha-1}\,dx.
    	\end{equation}

 %  {First identity: $I_0=I-E$ }
    	
    	%\begin{lemma}\label{lem:I0}
       % [Identity $I_0=I-E$]
    	Using the integration by part arguments as in Lemma \ref{lem:I0 Simple}, we can show the following identity,	
    \begin{equation}\label{eq:I0IE}
    			I_0(r)=D(r)+L(r)-E(r)=I(r)-E(r).
    		\end{equation}
         
    	%\end{lemma}
The Lemma \ref{lem:hprime} and Lemma \ref{lem:Dprime_variable} below are standard and can be obtained by integration by parts. We include their proof in the Appendix for the complete of the presentation.
    \begin{lemma}\label{lem:hprime}
    	One has the identity
    		\begin{equation}\label{eq:Hprime_final_413}
    		H'(r)=\frac{2\alpha+n-2}{r}\,H(r)+\frac{I-E}{\alpha r}+\frac{1}{r}\,\mathcal{E}_H(r),
    		\end{equation}
    	where
    		\begin{equation}\label{eq:EH_def_412}	\mathcal{E}_H(r):=\int_{B_r}|u|^2\Big[\operatorname{div}(Ax)\,\nu^{-1}-n\Big]\nu\,e_r^{\alpha-1}\,dx.
    		\end{equation}
    		Moreover,
\begin{equation}\label{eq:EH_bound_414}
    			|\mathcal{E}_H(r)|\le c\,L_A\,r\,H(r).
    		\end{equation}
    	
    	\end{lemma}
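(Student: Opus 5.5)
We will follow the computation of Lemma \ref{lem:I0 Simple}, with the weight $\nu$ inserted. Differentiating $H(r)=\int_{B_r}u^2\nu e_r^{\alpha-1}$ in $r$, the boundary term vanishes since $e_r^{\alpha-1}=0$ on $\partial B_r$ (as $\alpha\ge 2$), and we get
\[
H'(r)=2(\alpha-1)r\int_{B_r}u^2\nu e_r^{\alpha-2}\,dx .
\]
Writing $r^2=e_r+|x|^2$ splits this as $\frac{2(\alpha-1)}{r}H+\frac{2(\alpha-1)}{r}\int_{B_r}u^2\nu|x|^2e_r^{\alpha-2}$. The key observation is that $\nu|x|^2=a_{ij}x_ix_j=(Ax)\cdot x$. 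Since $\nabla e_r^{\alpha-1}=-2(\alpha-1)xe_r^{\alpha-2}$, the second term equals
\[
-\frac1r\int_{B_r}u^2\,(Ax)\cdot\nabla e_r^{\alpha-1}\,dx .
\]

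Next we integrate by parts; there is no boundary term because $e_r^{\alpha-1}$ vanishes on $\partial B_r$. This gives
\[
\frac1r\int_{B_r}\operatorname{div}(u^2Ax)\,e_r^{\alpha-1}
=\frac1r\int_{B_r}u^2\operatorname{div}(Ax)e_r^{\alpha-1}+\frac2r\int_{B_r}u\,a_{ij}x_j\partial_i u\, e_r^{\alpha-1},
\]
where we used the symmetry of $A$, so that $\nabla(u^2)\cdot Ax=2u\,a_{ij}\partial_iu\,x_j$. By \eqref{eq:I0rewrite}, the last term is exactly $\frac{1}{\alpha r}I_0$, and by \eqref{eq:I0IE} this equals $\frac{I-E}{\alpha r}$. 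For the first term we write $\operatorname{div}(Ax)=n\nu+\big[\operatorname{div}(Ax)\nu^{-1}-n\big]\nu$. This produces $\frac nr H+\frac1r\mathcal E_H$. Summing, $2(\alpha-1)+n=2\alpha+n-2$, which yields \eqref{eq:Hprime_final_413}.

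For the bound \eqref{eq:EH_bound_414}, we compute $\operatorname{div}(Ax)=\operatorname{tr}A(x)+x_j\partial_i a_{ij}$. The second piece is bounded by $CL_A|x|\le CL_Ar$ on $B_r$. For the trace piece, $\operatorname{tr}A(x)-n\nu(x)=\operatorname{tr}(A(x)-I)-n(\nu(x)-1)$, using $A(0)=I$. By \eqref{eq:Aclose} and \eqref{eq:mucmp1} this is at most $CL_Ar$. Hence $|\operatorname{div}(Ax)\nu^{-1}-n|\le \lambda^{-1}|\operatorname{div}(Ax)-n\nu|\le cL_Ar$. Inserting this pointwise bound into the integrand of $\mathcal E_H$, and using $\nu\ge\lambda>0$, gives $|\mathcal E_H|\le cL_ArH$.

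The main delicate point is regularity at $x=0$ and in the integration by parts: $\nu$ is only bounded (it is discontinuous at the origin). However, the integration by parts only involves the field $u^2Ax$, which is Lipschitz, and never differentiates $\nu$. So the argument is justified for $u\in H^1_{loc}$, which is enough given the regularity of $u$ (approximate by smooth functions if needed).
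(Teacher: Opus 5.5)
Your proposal is correct and follows the paper's proof essentially step by step. You differentiate $H$, split $r^2=e_r+|x|^2$, use $\nu|x|^2=Ax\cdot x$, integrate by parts against $\nabla e_r^{\alpha-1}$, identify the resulting term as $\frac{1}{\alpha r}I_0=\frac{I-E}{\alpha r}$, and split $\operatorname{div}(Ax)=n\nu+(\operatorname{div}(Ax)-n\nu)$, exactly as the paper does. The only difference is that you derive the pointwise bound $|\operatorname{div}(Ax)\nu^{-1}-n|\le CL_A|x|$ yourself, from $\operatorname{div}(Ax)=\operatorname{tr}A+x_j\partial_i a_{ij}$ together with \eqref{eq:Aclose} and \eqref{eq:mucmp1}, whereas the paper simply cites it as \eqref{AAA-1}.
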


    Then
         Lemma \ref{lem:hprime} gives that 
\begin{equation}\label{eq:Hprime_key}
    		\left|\,H'(r)-\frac{2\alpha+n-2}{r}H(r)-\frac{1}{\alpha r}\big(I(r)-E(r)\big)\right|
    		\le C L_A H(r),
    	\end{equation}
    	with $C=C(n,\lambda,\Lambda)$.
    	
    We now turn to the derivative of $D(r)$. 
    Define, for $x\neq 0$,
    \begin{equation}\label{eq:Zdef}
    	\omega(x):=\frac{A(x)x}{\nu(x)} ,\qquad\text{then}\qquad \omega(x)\cdot x=\frac{A(x)x\cdot x}{\nu(x)}=|x|^2.
    \end{equation}
   % (Here $(A(x)x)_j=a_{ij}(x)x_i$.) Moreover, using $A(0)=I$, \eqref{eq:Aclose}, and $|\nu(x)-1|\le C L_A r$ for $|x|\le r$,
    The following properties for $\omega(x)$ hold
\begin{equation}\label{eq:Zbounds}
    \begin{aligned}
    |\omega(x)-x|\le C L_A |x|^2
       ,\qquad
    	|\nabla \omega(x)-I|\le C L_A |x|,\qquad
    	| \mathrm{div} \omega(x)-n|\le C L_A |x|,
        \end{aligned}
    \end{equation}
   and it holds that
    \begin{align}\label{AAA-1}
    |{\rm div}(Ax)\nu^{-1}-n|\le CL_A|x|,
    \end{align}
     where $C$ depends on $n, \lambda, \Lambda$.  See e.g. \cite{D25}, \cite{GPS18}, \cite{ZZ23}.
     \begin{lemma}\label{lem:Dprime_variable}
    	There exists $C=C(n,\lambda,\Lambda)>0$ such that for every $0<r\le R$,
    	\begin{equation}\label{eq:Dprime_formula}
    		\begin{aligned}
    			D'(r)=&\ \frac{2\alpha+n-2}{r}D(r)
    			+\frac{4\alpha}{r}\int_{B_r}(A\nabla u\cdot x)^2\,\nu^{-1}\,e_r^{\alpha-1}\,dx
    				 \\
    			&-\frac{2}{r}\int_{B_r}(A\nabla u\cdot x)\,\nu^{-1}\,{\rm div}(A\nabla u)\,e_r^\alpha\,dx
    			+\frac{1}{r}\mathcal{E}_D(r),
    		\end{aligned}
    	\end{equation}
    		where the error term is
\begin{equation}\label{eq:ED_definition}
    		\begin{aligned}
    			\mathcal{E}_D(r):=&\ \int_{B_r}({\rm div} \omega-n)\,a_{ij}\partial_i u\,\partial_j u\,e_r^\alpha\,dx
    			+2\int_{B_r}a_{ij}\partial_k u\,\partial_j u\,(\delta_{ik}-\partial_i \omega_k)\,e_r^\alpha\,dx \\
    			&\ +\int_{B_r}(\partial_k a_{ij})\partial_i u\,\partial_j u\,\omega_k\,e_r^\alpha\,dx.
    		\end{aligned}
    	\end{equation}
    	Moreover, the error term satisfies
    	\begin{equation}\label{eq:ED_bound}
    		|\mathcal{E}_D(r)|\le C L_A r\,D(r).
    	\end{equation}
    \end{lemma}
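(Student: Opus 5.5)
The plan is to repeat the computation of Proposition \ref{pro-1} with the vector field $\omega$ from \eqref{eq:Zdef} in place of $x$, and to push every discrepancy coming from $A$ into $\mathcal{E}_D$. First apply Lemma \ref{lem:diffF} with $G=a_{ij}\partial_i u\,\partial_j u$. Since $x\cdot\nabla G$ does not integrate by parts cleanly against $A$, rewrite it as
\[
x\cdot\nabla G=\omega\cdot\nabla G+(x-\omega)\cdot\nabla G .
\]
Because the paper's $\mathcal{E}_D$ contains only first derivatives of $u$, the cleaner route is to start from
\[
\frac{d}{dr}\int_{B_r}G e_r^\alpha=2\alpha r\int_{B_r} G e_r^{\alpha-1}
\]
and use the identity $2\alpha|x|^2e_r^{\alpha-1}=-\omega\cdot\nabla e_r^\alpha$. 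This holds since $\nabla e_r^\alpha=-2\alpha x e_r^{\alpha-1}$ and $\omega\cdot x=|x|^2$, which is exactly why $\omega$ is normalized by $\nu$. Writing $r^2=e_r+|x|^2$, we get
\[
D'(r)=\frac{2\alpha}{r}D+\frac1r\int_{B_r}\operatorname{div}(G\omega)e_r^\alpha .
\]
Expanding $\operatorname{div}(G\omega)=G\operatorname{div}\omega+\omega_k\partial_kG$ and adding and subtracting $n$ gives the term $\frac{n}{r}D$ together with the first error term of \eqref{eq:ED_definition}.

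Next treat $\omega_k\partial_k G$. It equals $(\partial_k a_{ij})\partial_iu\,\partial_ju\,\omega_k$, which is the third error term, plus $2a_{ij}\partial_{ik}u\,\partial_ju\,\omega_k$. In the latter, move the derivative via
\[
\partial_i u_k\,\omega_k=\partial_i(\omega_k u_k)-u_k\partial_i\omega_k ,
\]
and split $\partial_i\omega_k=\delta_{ik}-(\delta_{ik}-\partial_i\omega_k)$. This produces $-2D$ (hence $2\alpha+n-2$), the second error term, and $2\int a_{ij}\partial_j u\,\partial_i(\omega\cdot\nabla u)e_r^\alpha$. Integrate the last term by parts; there is no boundary term since $e_r^\alpha=0$ on $\partial B_r$. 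It yields
\[
-2\int \operatorname{div}(A\nabla u)(\omega\cdot\nabla u)e_r^\alpha+4\alpha\int (A\nabla u\cdot x)(\omega\cdot\nabla u)e_r^{\alpha-1}.
\]
By the symmetry of $A$ we have $\omega\cdot\nabla u=(A\nabla u\cdot x)\nu^{-1}$, so these are precisely the two main terms in \eqref{eq:Dprime_formula}. The only care needed is bookkeeping of index placement and of the symmetry of $a_{ij}$.

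For the bound \eqref{eq:ED_bound}, apply \eqref{eq:Zbounds} on $B_r$: $|\operatorname{div}\omega-n|\le CL_A r$, $|I-\nabla\omega|\le CL_Ar$ and $|\omega|\le C|x|\le Cr$, together with $|\nabla A|\le L_A$. Each integrand is then at most $CL_A r\,|\nabla u|^2e_r^\alpha$, and ellipticity gives $|\nabla u|^2\le\lambda^{-1}a_{ij}\partial_iu\,\partial_ju$, so $|\mathcal{E}_D|\le CL_A r D$.

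The main obstacle is the second-derivative term. Justifying the integration by parts requires $u\in H^2_{loc}$, which follows from elliptic regularity since $A$ is Lipschitz and $V$ is bounded. One must also confirm that $\omega$ is Lipschitz near $0$ so that \eqref{eq:Zbounds} applies; this is the cited fact.
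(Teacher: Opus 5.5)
Your proposal is correct and is essentially the paper's proof. Like the paper, you split $r^2=e_r+|x|^2$, use $2\alpha|x|^2e_r^{\alpha-1}=-\omega\cdot\nabla e_r^\alpha$, expand $\omega_k\partial_k(a_{ij}\partial_iu\,\partial_ju)$ using the symmetry of $A$, and integrate the second-derivative term by parts. Your product-rule step $\partial_iu_k\,\omega_k=\partial_i(\omega\cdot\nabla u)-u_k\partial_i\omega_k$ followed by integration by parts is only a reordering of the paper's direct integration by parts in $x_i$, and your bound on $\mathcal{E}_D$ via \eqref{eq:Zbounds}, $|\omega|\le C|x|$, $|\nabla A|\le L_A$ and ellipticity matches the paper's (the opening detour through Lemma \ref{lem:diffF} is unnecessary and can be dropped).
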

	
    We need to  consider the derivative of $I$. Recall 
        $I(r)=D(r)+L(r)$ 
         and $L=\int_{B_r}V_\varepsilon u^2e_r^\alpha$.
    	Applying Lemma \ref{lem:diffF} to $G(x)=V_\varepsilon(x)u^2(x)$gives that
\begin{equation} \label{eq:Lprime_formula}
    \begin{aligned}
    		L'(r)&=\frac{2\alpha+n}{r}\int_{B_r}V_\varepsilon u^2e_r^\alpha+\frac{1}{r}\int_{B_r}x\cdot\nabla(V_\varepsilon u^2)\,e_r^\alpha \\
            &=
            \frac{2\alpha+n}{r}L(r)+\frac{1}{r}\int_{B_r}x\cdot\nabla  V_\varepsilon\,u^2\,e_r^\alpha\,dx 
    		+\frac{2}{r}\int_{B_r}V_\varepsilon\,u\,(x\cdot\nabla u)\,e_r^\alpha\,dx .
    \end{aligned}
\end{equation}

    Adding \eqref{eq:Dprime_formula} and \eqref{eq:Lprime_formula}, and using (\ref{eq:PDEeps}), we conclude that
\begin{equation}\label{eq:Iprime_formula}
    	\begin{aligned}
    		I'(r)=&\ \frac{2\alpha+n-2}{r}I(r)+\frac{2}{r}L(r)
    		+\frac{4\alpha}{r}\int_{B_r}\Big(A\nabla u\cdot x\Big)^2\nu^{-1}\,e_r^{\alpha-1}\,dx \\
    		&\ +\frac{2}{r}\int_{B_r}f_\varepsilon\,u\,\Big(
            A\nabla u\cdot x
            \Big)\nu^{-1}\,e_r^{\alpha}\,dx
    		+\frac{1}{r}\int_{B_r}x\cdot\nabla V_\varepsilon\,u^2\,e_r^\alpha\,dx \\
    		&\ + B_V(r) + \frac{1}{r}\mathcal{E}_D(r),
    	\end{aligned}
    \end{equation}
    where
    \begin{equation}\label{eq:BVdef}
    	B_V(r):=\frac{2}{r}\int_{B_r}V_\varepsilon\,u\,\Big((x\cdot\nabla u)-\nu^{-1}\big(A\nabla u\cdot x)\Big)\,e_r^\alpha\,dx .
    \end{equation}
 Next  we show a upper bound for  $B_V$.
    \begin{lemma}\label{lem:BV_bound}
    	There exists $C=C(n,\lambda,\Lambda)>0$ such that for every $0<r< R$,
    	\begin{equation}\label{eq:BV_bound}
    		|B_V(r)|\le C L_A\Big(Mr^2+M_0\varepsilon^{\beta-1}r^3\Big)\,H(r).
    	\end{equation}
    \end{lemma}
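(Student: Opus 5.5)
The key observation is that the integrand of $B_V$ is a gradient of $u^2$ against a small vector field, so I would integrate by parts to remove $\nabla u$ entirely. By \eqref{eq:Zdef}, $A\nabla u\cdot x=\nabla u\cdot A x=\nu\,\nabla u\cdot\omega$. Hence
\[
(x\cdot\nabla u)-\nu^{-1}(A\nabla u\cdot x)=\nabla u\cdot(x-\omega),
\]
and therefore
\[
B_V(r)=\frac{2}{r}\int_{B_r}V_\varepsilon\,u\,\nabla u\cdot(x-\omega)\,e_r^\alpha\,dx=\frac1r\int_{B_r}V_\varepsilon\,\nabla(u^2)\cdot(x-\omega)\,e_r^\alpha\,dx .
\]
Because $e_r^\alpha$ vanishes on $\partial B_r$, the divergence theorem gives no boundary term. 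We obtain
\[
B_V(r)=-\frac1r\int_{B_r}u^2\Big[\nabla V_\varepsilon\cdot(x-\omega)\,e_r^\alpha+V_\varepsilon\,(n-{\rm div}\,\omega)\,e_r^\alpha+V_\varepsilon\,(x-\omega)\cdot\nabla e_r^\alpha\Big]dx .
\]
There is a mild singularity of $\omega$ at $0$, but the bounds \eqref{eq:Zbounds} make the integrand integrable, so the integration by parts is justified by cutting out a small ball and letting its radius tend to $0$.

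The potentially dangerous term is the last one, since $\nabla e_r^\alpha=-2\alpha x\,e_r^{\alpha-1}$ would introduce a factor $\alpha$ that is absent from \eqref{eq:BV_bound}. This is where I expect the main obstacle, and the resolution is the defining property of $\omega$ in \eqref{eq:Zdef}. We have $(x-\omega)\cdot x=|x|^2-\omega\cdot x=0$, so the third term vanishes identically. This is precisely why the weight $\nu$ and the vector field $\omega=Ax/\nu$ were introduced.

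For the two remaining terms I would use \eqref{VVV-1}, \eqref{eq:Zbounds} and $|x|\le r$. First, $|x-\omega|\le CL_A r^2$ and $|\nabla V_\varepsilon|\le CM_0\varepsilon^{\beta-1}$, which controls the first term. Second, $|n-{\rm div}\,\omega|\le CL_A r$ and $|V_\varepsilon|\le M$, which controls the second. In both I then use $e_r^\alpha\le r^2e_r^{\alpha-1}$, and $\nu\ge\lambda$ gives $\int_{B_r}u^2e_r^{\alpha-1}\le \lambda^{-1}H(r)$. Putting these together,
\[
|B_V(r)|\le \frac1r\Big(CM_0\varepsilon^{\beta-1}L_Ar^2\cdot r^2+CML_Ar\cdot r^2\Big)\lambda^{-1}H(r)=CL_A\big(Mr^2+M_0\varepsilon^{\beta-1}r^3\big)H(r),
\]
with $C=C(n,\lambda,\Lambda)$. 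This is \eqref{eq:BV_bound}.
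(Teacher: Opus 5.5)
Your proof is correct and follows essentially the same route as the paper. Both arguments rewrite the integrand as $V_\varepsilon\nabla(u^2)\cdot(x-\omega)\,e_r^\alpha$, integrate by parts, discard the $\nabla e_r^\alpha$ term using $(x-\omega)\cdot x=0$, and bound the remaining two terms via \eqref{eq:Zbounds}, \eqref{VVV-1} and $e_r^\alpha\le r^2e_r^{\alpha-1}$; your extra remark justifying the integration by parts near the origin is harmless (the paper omits it).
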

    \begin{proof}
    	Recall $\omega(x)$ from \eqref{eq:Zdef}.  Note that $\nu^{-1}(A\nabla u\cdot x)=\omega\cdot\nabla u$.
    	Hence
   \begin{align*}
    	B_V(r)&=\frac{2}{r}\int_{B_r}V_\varepsilon\,u\,\big((x-\omega)\cdot\nabla u\big)\,e_r^\alpha\,dx
    	=-\frac{1}{r}\int_{B_r}u^2\, {\rm div}\!\big(V_\varepsilon(x)(x-\omega)e_r^\alpha\big)\,dx.
    	\end{align*}
    	Expanding the divergence to have
    	\[
    	{\rm div}\!\big(V_\varepsilon(x)(x-\omega)e_r^\alpha\big)
    	=(x-\omega)\cdot\nabla V_\varepsilon(x)\,e_r^\alpha
    	+V_\varepsilon(x)\,{\rm div}(x-\omega)\,e_r^\alpha
    	+V_\varepsilon(x)(x-\omega)\cdot\nabla(e_r^\alpha).
    	\]
    	The last term vanishes because $\nabla(e_r^\alpha)=-2\alpha x\,e_r^{\alpha-1}$ and
    	$(x-\omega)\cdot x=|x|^2-\omega\cdot x=0$ by \eqref{eq:Zdef}. Thus,
    	\[
    	|B_V(r)|\le \frac{1}{r}\int_{B_r}u^2\Big(|x-\omega|\,|\nabla V_\varepsilon|+|V_\varepsilon|\,|{\rm div} \omega-n|\Big)e_r^\alpha\,dx.
    	\]
    	Thanks to  \eqref{eq:Zbounds} and \eqref{VVV-1}, we obtain
    	\begin{align*}
    	    |B_V(r)|\le C L_A\Big(M_0\varepsilon^{\beta-1}r+M\Big)\int_{B_r}u^2e_r^\alpha\,dx.
    	\end{align*}
    	Using the fact that
    	 $e_r^\alpha\le r^2e_r^{\alpha-1}$,
         %and $\nu\simeq 1$ on $B_{r}$, 
         we derive \eqref{eq:BV_bound}.
    \end{proof}
    
    With the derivative of $H, D, I$ in hand, we are able to show the almost monotonicity of the frequency function.
\begin{lemma}\label{lem:monotoneN}
    	There exists $C=C(n,\lambda,\Lambda)>0$ such that 
\begin{align*}
 \widetilde {\mathcal{N}}:= e^{CL_{A}r}\big(\mathcal{N}(r) +P(r) \big) 
\end{align*}
        is monotone non-decreasing for every $0<r< R$, where $P(r)=M r^2+CM_0 \varepsilon^{\beta-1} r^3+ M_0^2\varepsilon^{2\beta} r^4+CL_{A}M r^3+CL_{A} M_0 r^4.$
      %  $\widetilde N$]
    %	For every $0<r\leq R$ one has $\widetilde N'(r)\ge 0$. In particular $\widetilde N$ is nondecreasing on $(0,R]$.
    	%\begin{equation}\label{eq:Nprime_lower_variable}
    %		\mathcal{N}'(r)+C L_A \mathcal{N}(r)\ \ge\ -2Mr - C K r^2\varepsilon^{\beta-1}-\frac{C}{4\alpha}K^2 r^3\varepsilon^{2\beta}
    	%	\;-\; C L_A Mr^2 \;-\; C L_A K\varepsilon^{\beta-1}r^3 .
    %	\end{equation}
    \end{lemma}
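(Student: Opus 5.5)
The plan mirrors the proof of Proposition \ref{pro-1}, with the variable-coefficient errors absorbed by the factor $e^{CL_Ar}$. We compute $H^2\mathcal{N}'=I'H-H'I$, using \eqref{eq:Iprime_formula} for $I'$ and Lemma \ref{lem:hprime} for $H'$. The terms $\frac{2\alpha+n-2}{r}HI$ cancel, which leaves
\begin{align*}
H^2\mathcal{N}'={}&\frac{2}{r}HL+\frac{4\alpha H}{r}\int_{B_r}(A\nabla u\cdot x)^2\nu^{-1}e_r^{\alpha-1}+\frac{2H}{r}\int_{B_r}f_\varepsilon u(A\nabla u\cdot x)\nu^{-1}e_r^\alpha\\
&+\frac{H}{r}\int_{B_r}x\cdot\nabla V_\varepsilon u^2e_r^\alpha+HB_V+\frac{H}{r}\mathcal{E}_D-\frac{1}{\alpha r}(I-E)I-\frac{1}{r}\mathcal{E}_H I.
\end{align*}

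\textbf{Completing the square.} Set $\Phi:=\nu^{-1}(A\nabla u\cdot x)+\frac{e_r}{4\alpha}f_\varepsilon u$ and measure everything with the weight $\nu e_r^{\alpha-1}$, which is the weight appearing in $H$. Then
\[
\int_{B_r}\Phi^2\nu e_r^{\alpha-1}=\int_{B_r}(A\nabla u\cdot x)^2\nu^{-1}e_r^{\alpha-1}+\frac{1}{2\alpha}\int_{B_r}f_\varepsilon u(A\nabla u\cdot x)e_r^\alpha+\frac{1}{16\alpha^2}\int_{B_r}f_\varepsilon^2u^2\nu e_r^{\alpha+1}.
\]
The cross term here lacks the factor $\nu^{-1}$ that appears in $I'$. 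To fix this, I would put $\nu^{-1}$ into the $f_\varepsilon$ part of $\Phi$ instead, i.e. use $\frac{e_r}{4\alpha}\nu^{-1}f_\varepsilon u$, or else bound the mismatch $|\nu^{-1}-1|\le CL_Ar$ as an error. Next set $J:=\int_{B_r}u\Phi\nu e_r^{\alpha-1}$. By \eqref{eq:I0rewrite} and \eqref{eq:I0IE},
\[
J=\frac{I_0}{2\alpha}+\frac{E}{4\alpha}=\frac{2I-E}{4\alpha}
\]
(up to the same small correction), so $(I-E)I=16\alpha^2J^2-E^2/4$. Cauchy--Schwarz gives $H\int_{B_r}\Phi^2\nu e_r^{\alpha-1}-J^2\ge0$. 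The remaining $f_\varepsilon^2$ term is bounded below by $-\frac{CM_0^2\varepsilon^{2\beta}r^3}{\alpha}H^2$, because $e_r^{\alpha+1}\le r^4e_r^{\alpha-1}$ and $\nu\simeq1$.

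\textbf{Bounding the other terms.}
\begin{itemize}
\item The $L$ and $\nabla V_\varepsilon$ terms give $-2Mr-CM_0\varepsilon^{\beta-1}r^2$, as before.
\item $B_V$ gives $-CL_A(Mr^2+M_0\varepsilon^{\beta-1}r^3)H^2$ by Lemma \ref{lem:BV_bound}. After integrating, this accounts for the $CL_AMr^3$ and the $\varepsilon$-dependent $L_A$ contributions in $P$.
\item The $\mathcal{E}_D$ term: by \eqref{eq:ED_bound}, $\frac{1}{r}|\mathcal{E}_D|\le CL_AD$. Writing $D=I-L\le I+Mr^2H$ gives $\frac{H}{r}\mathcal{E}_D\ge -CL_AH(I+Mr^2H)$.
\item The $\mathcal{E}_H$ term: by \eqref{eq:EH_bound_414}, $\frac{1}{r}|\mathcal{E}_H I|\le CL_AH|I|$.
\end{itemize}

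Dividing by $H^2$, I expect
\[
\mathcal{N}'\ge -CL_A|\mathcal{N}|-p(r),\qquad p=P'.
\]
Here $p$ collects $2Mr$, $CM_0\varepsilon^{\beta-1}r^2$, $CM_0^2\varepsilon^{2\beta}r^3$, $CL_AMr^2$ and $CL_AM_0r^3$.

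\textbf{Main obstacle.} The difficulty is the sign of $\mathcal{N}$. The term $-CL_A|\mathcal{N}|$ only combines with the factor $e^{CL_Ar}$ when $\mathcal{N}+P\ge0$. Since $D\ge0$ and $|L|\le Mr^2H$, we have $\mathcal{N}\ge -Mr^2\ge -P$. Hence $\mathcal{N}+P\ge0$, and
\[
|\mathcal{N}|\le \mathcal{N}+2Mr^2\le (\mathcal{N}+P)+Mr^2.
\]
This gives $(\mathcal{N}+P)'\ge -CL_A(\mathcal{N}+P)-CL_AMr^2$. The leftover $CL_AMr^2$ is absorbed by enlarging the $CL_AMr^3$ term of $P$. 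The exponential integrating factor then yields $\big(e^{CL_Ar}(\mathcal{N}+P)\big)'\ge0$. The delicate point is keeping the constants consistent, so that the terms added to $P$ for absorption do not themselves create new $L_AP$ errors. I would handle this by noting that $L_AP\le CL_A(\mathcal{N}+P)$ is already accounted for by the $e^{CL_Ar}$ factor.
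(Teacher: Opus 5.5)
Your outline follows the paper's argument: expand $H^2\mathcal{N}'=I'H-H'I$, complete the square in the $f_\varepsilon$ cross term, apply Cauchy--Schwarz with the weight $\nu$, bound the remaining terms, and close with the factor $e^{CL_Ar}$. However, three steps do not work as written.

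First, you must commit to the $\nu^{-1}$-version of $\Phi$. Take $\Phi=\nu^{-1}\bigl((A\nabla u\cdot x)+\tfrac{e_r}{4\alpha}f_\varepsilon u\bigr)$ with weight $\nu e_r^{\alpha-1}$. This is exactly the paper's $\Phi$ with weight $\nu^{-1}e_r^{\alpha-1}$. The cross term then carries $\nu^{-1}$, and $J=\tfrac{I_0}{2\alpha}+\tfrac{E}{4\alpha}=\tfrac{2I-E}{4\alpha}$ holds exactly, with no correction. Your other option does not give the stated $P$. The mismatch $\tfrac{2H}{r}\int_{B_r}f_\varepsilon u\,(A\nabla u\cdot x)(\nu^{-1}-1)e_r^\alpha$ contains $\nabla u$. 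Controlling it through $D/H\le|\mathcal{N}|+CMr^2$ and Young's inequality leaves an error of order $L_AM_0^2\varepsilon^{2\beta}r^4$ in $\mathcal{N}'$, without the factor $1/\alpha$. Any attempt to recover $1/\alpha$ moves a large factor into the coefficient of $|\mathcal{N}|$, hence into the integrating factor.

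Second, $J=\tfrac{2I-E}{4\alpha}$ gives $(I-E)I=\tfrac{(2I-E)^2-E^2}{4}=4\alpha^2J^2-\tfrac{E^2}{4}$, not $16\alpha^2J^2-\tfrac{E^2}{4}$. Only the correct factor produces $-\tfrac{4\alpha}{r}J^2$ to pair with $\tfrac{4\alpha H}{r}\int_{B_r}\Phi^2\nu e_r^{\alpha-1}$. Your identity would require $H\int_{B_r}\Phi^2\nu e_r^{\alpha-1}\ge 4J^2$, which Cauchy--Schwarz does not give. An intermediate line in Section 2 of the paper has the same slip, but its final identity $-\tfrac{1}{\alpha r}(I-E)I=-\tfrac{4\alpha}{r}J^2+\tfrac{E^2}{4\alpha r}$ is correct.

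Third, the $L_AM_0$ term of $P$ is wrong. Your own $B_V$ bound puts $-CL_AM_0\varepsilon^{\beta-1}r^3$ into the lower bound for $\mathcal{N}'$. Since $\varepsilon^{\beta-1}\ge1$ is unbounded as $\varepsilon\to0$ while $C=C(n,\lambda,\Lambda)$, the entry $CL_AM_0r^3$ in your $p$ cannot dominate it. $P$ must contain $CL_AM_0\varepsilon^{\beta-1}r^4$. This is what the paper's proof actually uses ($b_3=CL_AM_0\varepsilon^{\beta-1}$); the displayed statement simply dropped the factor $\varepsilon^{\beta-1}$.

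Finally, your ``main obstacle'' is not an obstacle. Since $H$ carries the weight $\nu\ge\lambda$, you have $|L|\le\lambda^{-1}Mr^2H$, so $\mathcal{N}\ge-CMr^2$ and $|\mathcal{N}|\le\mathcal{N}+CMr^2$. This gives $\mathcal{N}'+CL_A\mathcal{N}\ge-P'$. Then $\bigl(e^{CL_Ar}(\mathcal{N}+P)\bigr)'=e^{CL_Ar}\bigl(\mathcal{N}'+CL_A\mathcal{N}+P'+CL_AP\bigr)\ge CL_Ae^{CL_Ar}P\ge0$. No sign condition on $\mathcal{N}+P$ is needed. Enlarging coefficients of $P$ causes no circularity, because the term $CL_AP\ge0$ is simply discarded.
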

    
    \begin{proof}
 %\begin{equation}\label{eq:EHdef}
    	%	H'(r)=\frac{2\alpha+n-2}{r}H(r)+\frac{1}{\alpha r}\big(I(r)-E(r)\big)+\frac{1}{r}E_H(r),
    	%	\qquad |E_H(r)|\le C L_A r\,H(r).
    	%\end{equation}
    	Thanks to the fact that $H^2\mathcal{N}'=I'H-H'I$, \eqref{eq:Iprime_formula}, and \eqref{eq:Hprime_final_413}, we obtain that
\begin{equation}\label{eq:H2Nprime_step1}
    		\begin{aligned}
    			H(r)^2\mathcal{N}'(r)=&
    			\ \frac{2}{r}H(r)L(r)
    			+H(r)\Bigg(
    			\frac{4\alpha}{r}\!\int_{B_r}\!\Big(A\nabla u\cdot x\Big)^2\nu^{-1}e_r^{\alpha-1}\,dx \\
    			&
    			+\frac{2}{r}\!\int_{B_r}\!f_\varepsilon u\Big(A\nabla u\cdot x\Big)\nu^{-1}e_r^\alpha\,dx
    			+\frac{1}{r}\!\int_{B_r}\!x\cdot\nabla V_\varepsilon\,u^2e_r^\alpha\,dx
    			\Bigg) \\
    			&\ +H(r)B_V(r)+\frac{1}{r}H(r)\mathcal{E}_D(r)
    			-\frac{1}{\alpha r}\big(I(r)-E(r)\big)I(r)-\frac{1}{r}I(r)\mathcal{E}_H(r).
    		\end{aligned}
    	\end{equation}
    	
    	Now set
    	$\Phi:=\Big(A\nabla u\cdot x\Big)+\frac{e_r}{4\alpha}f_\varepsilon u.$
    	Then
    	\[
    	\Phi^2\nu^{-1}e_r^{\alpha-1}
    	=\Big(A\nabla u\cdot x\Big)^2\nu^{-1}e_r^{\alpha-1}
    	+\frac{1}{2\alpha}\,f_\varepsilon u\Big(A\nabla u\cdot x\Big)\nu^{-1}e_r^\alpha
    	+\frac{1}{16\alpha^2}f_\varepsilon^2u^2\nu^{-1}e_r^{\alpha+1}.
    	\]
    	Therefore,
    	\begin{equation}\label{eq:Phi_identity_variable}
    		\begin{aligned}
    			\int_{B_r} f_\varepsilon u\Big(A\nabla u\cdot x\Big)\nu^{-1}e_r^\alpha\,dx
    			&=
    			2\alpha\!\int_{B_r}\!\Phi^2\nu^{-1}e_r^{\alpha-1}\,dx
-2\alpha\!\int_{B_r}\!\Big(A\nabla u\cdot x\Big)^2\nu^{-1}e_r^{\alpha-1}\,dx\\
    			&-\frac{1}{8\alpha}\!\int_{B_r}\!f_\varepsilon^2u^2\nu^{-1}e_r^{\alpha+1}\,dx.
    		\end{aligned}
    	\end{equation}
Substituting \eqref{eq:Phi_identity_variable} into \eqref{eq:H2Nprime_step1} to get
    	\begin{equation}\label{eq:H2Nprime_step2}
    		\begin{aligned}
    			H^2\mathcal{N}'=&\ \frac{2}{r}HL+\frac{4\alpha H}{r}\int_{B_r}\Phi^2\nu^{-1}e_r^{\alpha-1}\,dx
    			-\frac{H}{4\alpha r}\int_{B_r}f_\varepsilon^2u^2\nu^{-1}e_r^{\alpha+1}\,dx \\
    			&\ +\frac{H}{r}\int_{B_r}x\cdot\nabla V_\varepsilon\,u^2e_r^\alpha\,dx
    			+HB_V+\frac{1}{r}H\mathcal{E}_D-\frac{1}{\alpha r}(I-E)I-\frac{1}{r}I \mathcal{E}_H .
    		\end{aligned}
    	\end{equation}
    	Set
    	\[
J(r):=\int_{B_r}u\,\Phi\,e_r^{\alpha-1}\,dx
    	=\int_{B_r}u\Big(A\nabla u\cdot x\big)e_r^{\alpha-1}\,dx+\frac{1}{4\alpha}\int_{B_r}f_\varepsilon u^2e_r^\alpha\,dx.
    	\]
    	Note that $I_0=I-E$. From \eqref{eq:I0rewrite} and (\ref{eq:I0IE}), we get
\begin{equation}\label{eq:J_identity_variable}
    		J(r)=\frac{I(r)-E(r)}{2\alpha}+\frac{E(r)}{4\alpha}=\frac{2I(r)-E(r)}{4\alpha}.
    	\end{equation}
    	Consequently,
\begin{equation}\label{eq:IE_identity_variable}
    		-\frac{1}{\alpha r}(I-E)I=-\frac{4\alpha}{r}J^2(r)+\frac{E^2(r)}{4\alpha r}.
    	\end{equation}
    	Inserting \eqref{eq:IE_identity_variable} into \eqref{eq:H2Nprime_step2} to obtain\begin{equation}\label{eq:H2Nprime_step3}
    		\begin{aligned}
    			H^2\mathcal{N}'=&\ \frac{2}{r}HL+\frac{4\alpha}{r}\Big(H\int_{B_r}\Phi^2\nu^{-1}e_r^{\alpha-1}\,dx-J^2(r)\Big) \\
    			&\ -\frac{H}{4\alpha r}\int_{B_r}f_\varepsilon^2u^2\nu^{-1}e_r^{\alpha+1}\,dx+\frac{E(r)^2}{4\alpha r}
    			+\frac{1}{r}H\int_{B_r}x\cdot\nabla V_\varepsilon\,u^2e_r^\alpha\,dx \\
    			&\ +HB_V+\frac{1}{r}H\mathcal{E}_D-\frac{1}{r}I \mathcal{E}_H .
    		\end{aligned}
    	\end{equation}
    	By Cauchy--Schwarz with the weight function $\nu$, it holds that
    	\[
    	J^2(r)=\Big(\int u\Phi\,e_r^{\alpha-1}\Big)^2
    	\le \Big(\int u^2\nu\,e_r^{\alpha-1}\Big)\Big(\int \Phi^2\nu^{-1}e_r^{\alpha-1}\Big)
    	=H\int \Phi^2\nu^{-1}e_r^{\alpha-1}.
    	\]
    	Hence the second term on the right hand side of  \eqref{eq:H2Nprime_step3} is nonnegative. 
    	Hence \eqref{eq:H2Nprime_step3} leads to
\begin{equation}\label{eq:H2Nprime_lower_basic}
    		\begin{aligned}
    			H^2\mathcal{N}'\ \ge\ &
    			-\frac{2}{r}H|L|
    			-\frac{H}{4\alpha r}\int_{B_r}f_\varepsilon^2u^2\nu^{-1}e_r^{\alpha+1}\,dx
    			+\frac{1}{r}H\int_{B_r}x\cdot\nabla V_\varepsilon\,u^2e_r^\alpha\,dx \\
    			&\ -\frac{1}{r}H|\mathcal{E}_D|
    			-\frac{1}{r}|I||\mathcal{E}_H|
    			+H |B_V| .
    		\end{aligned}
    	\end{equation}
    	
    	We now estimate each term on the right-hand side (\ref{eq:H2Nprime_lower_basic}).
    	Using $\|V_\varepsilon\|_\infty\le M$ and  $\lambda \leq \nu\leq \Lambda$,
    	\begin{equation}\label{LLL-1}
    	|L(r)|\le M\int_{B_r}u^2e_r^\alpha\,dx \le CMr^2\int_{B_r}u^2\nu e_r^{\alpha-1}\,dx = CMr^2H(r).
    	\end{equation}
    	Then $-\frac{2}{r}\frac{|L|}{H}\ge -C Mr$.
   It follows from (\ref{VVV-1}) that
    	\[
    	\frac{1}{rH}\left|\int_{B_r}x\cdot\nabla V_\varepsilon\,u^2e_r^\alpha\,dx\right|
    	\le \frac{C\|\nabla V_\varepsilon\|_\infty}{H}\int_{B_r}u^2e_r^\alpha\,dx
    	\le C M_0 r^2\varepsilon^{\beta-1}.
    	\]
    	By $e_r^{\alpha+1}\le r^4e_r^{\alpha-1}$, we have
    	\[
    	\frac{1}{4\alpha rH}\int_{B_r}f_\varepsilon^2u^2\nu^{-1}e_r^{\alpha+1}\,dx
    	\le \frac{C\|f_\varepsilon\|_\infty^2}{4\alpha rH}\,r^4\int_{B_r}u^2e_r^{\alpha-1}\,dx
    	\le \frac{CM_0^2 r^3\varepsilon^{2\beta}}{4\alpha}.
    	\]
    	Note that $I(r)=D(r)+L(r)$. For the error term $\mathcal{E}_D$, \eqref{eq:ED_bound} gives
    	\[
    	\frac{|\mathcal{E}_D(r)|}{rH(r)}\le C L_A\frac{D(r)}{H(r)}\le C L_A\Big(|\mathcal{N}(r)|+\frac{|L(r)|}{H(r)}\Big)\le C L_A\big(|\mathcal{N}(r)|+Mr^2\big).
    	\]
    	The estimate \eqref{eq:EH_bound_414} in Lemma \ref{lem:hprime} gives that $\frac{|\mathcal{E}_H(r)|}{rH(r)}\le C L_A$. Hence
    	\[
    	\frac{|I(r)|}{H(r)}\frac{|\mathcal{E}_H(r)|}{rH(r)}\le C L_A |\mathcal{N}(r)|.
    	\]
    	Furthermore, Lemma \ref{lem:BV_bound} yields that
    	\[
    	\frac{|B_V(r)|}{H(r)}\le C L_A\big(Mr^2+M_0\varepsilon^{\beta-1}r^3\big).
    	\]
    	Dividing \eqref{eq:H2Nprime_lower_basic} by $H^2(r)$ and combining  the last four inequalities gives that
\begin{equation}\label{eq:Nprime_lower_abs}
    		\mathcal{N}'(r)\ \ge\ -2Mr - C M_0 r^2\varepsilon^{\beta-1}-\frac{C}{4\alpha}M_0^2 \varepsilon^{2\beta}r^3
    		- C L_A|\mathcal{N}(r)|-C L_A Mr^2 - C L_A M_0\varepsilon^{\beta-1}r^3 .
    	\end{equation}
    	
    	It remains to remove the absolute value on $\mathcal{N}(r)$ in the right side of the last inequality. Since $D(r)\ge 0$ and $\mathcal{N}(r)=\frac{D(r)+L(r)}{H(r)}$, we have
    	$\mathcal{N}(r)\ge -\frac{|L(r)|}{H(r)}\ge -C Mr^2$, where we used  (\ref{LLL-1}). Consequently $|\mathcal{N}(r)|\le \mathcal{N}(r)+C Mr^2$.
    	Substituting this into \eqref{eq:Nprime_lower_abs} to implies
    	that
    \begin{equation}\label{eq:Nprime_almost}
    	\mathcal{N}'(r)+c_0L_A \mathcal{N}(r)\ \ge\ -2Mr - C M_0r^2\varepsilon^{\beta-1}-\frac{C}{4\alpha}M_0^2 r^3\varepsilon^{2\beta}
    	\;-\; C L_A Mr^2 \;-\; C L_A M_0\varepsilon^{\beta-1}r^3 .
    \end{equation}

    Set
    \[
    a_1:=2M,\quad a_2:=CM_0\varepsilon^{\beta-1},\quad a_3:=\frac{C}{4\alpha}M_0^2\varepsilon^{2\beta},
    \quad b_2:=CL_A M,\quad b_3:=CL_A M_0\varepsilon^{\beta-1},
    \]
    and define
    \begin{equation}\label{eq:Pdef}
    	P(r):=\frac{a_1}{2}r^2+\frac{a_2}{3}r^3+\frac{a_3}{4}r^4+\frac{b_2}{3}r^3+\frac{b_3}{4}r^4,
    	\qquad 
    	\widetilde {\mathcal{N}}(r):=e^{c_0L_A r}\big(\mathcal{N}(r)+P(r)\big).
    \end{equation}
    	Differentiating $\widetilde {\mathcal{N}}$ in \eqref{eq:Pdef} with respect to $r$ yields that
    	\[
    	\widetilde {\mathcal{N}}'(r)=e^{c_0L_A r}\Big(\mathcal{N}'(r)+c_0L_A \mathcal{N}(r)+P'(r)+c_0L_A P(r)\Big).
    	\]
    	Since $c_0L_A P(r)\ge 0$, thanks to  \eqref{eq:Nprime_almost}, we obtain
    	\[
    	\widetilde {\mathcal{N}}'(r)\ \ge\ c_0e^{c_0L_A r}L_A P(r)\ge 0 .
    	\]
    This finishes the proof of the Lemma.
    \end{proof}

With aid of the monotonicity of $\widetilde {\mathcal{N}}$, we are ready to prove Theorem
\ref{thm:threeballs-II-opt-correct}.
   
\begin{proof}[Proof of Theorem
\ref{thm:threeballs-II-opt-correct} ]   By \eqref{eq:Hprime_key}, we get the bound on $E/H$.
    Indeed, since $|f_\varepsilon|\le C M_0\varepsilon^\beta$, then
    \begin{equation}\label{eq:EoverH_bound}
    	\left|\frac{E(r)}{\alpha r H(r)}\right|
    	\le \frac{1}{\alpha r H(r)}\int_{B_r}|f_\varepsilon|u^2e_r^\alpha
    	\le \frac{CM_0\varepsilon^\beta}{\alpha r H(r)}\,r^2\int_{B_r}u^2e_r^{\alpha-1}
    	\le \frac{CM_0\varepsilon^\beta}{\alpha}\,r.
    \end{equation}
    Dividing \eqref{eq:Hprime_key} by $H(r)$ and using \eqref{eq:EoverH_bound} yields
    \begin{equation}
    \begin{aligned}\label{eq:Hlog_ineq}
    	\frac{H'(r)}{H(r)}
    	\ge \frac{2\alpha+n-2}{r}+\frac{1}{\alpha r}\mathcal{N}(r)-CL_A-\frac{CM_0\varepsilon^\beta}{\alpha}r \\
    	\frac{H'(r)}{H(r)}
    	\le \frac{2\alpha+n-2}{r}+\frac{1}{\alpha r}\mathcal{N}(r)+CL_A+\frac{CM_0\varepsilon^\beta}{\alpha}r .
        \end{aligned}
    \end{equation}
    Utilizing \eqref{eq:Pdef}, we may rewrite
    \begin{equation}\label{eq:Nrewrite}
    	\mathcal{N}(r)=e^{-c_0L_A r}\widetilde {\mathcal{N}}(r)-P(r).
    \end{equation}
    Substituting \eqref{eq:Nrewrite} into \eqref{eq:Hlog_ineq} gives
    \begin{equation}\label{eq:Hlog_ineq_tildeN}
    	\frac{H'(r)}{H(r)}
    	\ge \frac{2\alpha+n-2}{r}+\frac{e^{-c_0L_A r}}{\alpha r}\widetilde {\mathcal{N}}(r)-\frac{P(r)}{\alpha r}-CL_A-\frac{CM_0\varepsilon^\beta}{\alpha}r,
    \end{equation}
    \begin{equation}\label{eq:Hlog_ineq_tildeN_upper}
    	\frac{H'(r)}{H(r)}
    	\le \frac{2\alpha+n-2}{r}+\frac{e^{-c_0L_A r}}{\alpha r}\widetilde{\mathcal{N}}(r)-\frac{P(r)}{\alpha r}+CL_A+\frac{CM_0\varepsilon^\beta}{\alpha}r.
    \end{equation}
    
   Integrating \eqref{eq:Hlog_ineq_tildeN} from $2r_2$ to $r_3$, using the monotonicity of  $\widetilde {\mathcal{N}}$, 
   we obtain that
    \begin{align}
    	\log\frac{H(r_3)}{H(2r_2)}
    	&=\int_{2r_2}^{r_3}\frac{H'(r)}{H(r)}\,dr \notag\\
    	&\ge \int_{2r_2}^{r_3}\Bigg[\frac{2\alpha+n-2}{r}+\frac{e^{-c_0L_A r_3}}{\alpha r}\widetilde{\mathcal{N}}(2r_2)
    	-\frac{P(r)}{\alpha r}-CL_A-\frac{CM_0\varepsilon^\beta}{\alpha}r\Bigg]dr 
    	\label{eq:int_lower}.
    \end{align}
    we can further bound \begin{equation}\label{eq:int_lower_simplified}
    	\log\frac{H(r_3)}{H(2r_2)}
    	\ge \Bigg[2\alpha+n-2+\frac{e^{-c_0L_A r_3}}{\alpha}\widetilde{\mathcal{N}}(2r_2)\Bigg]\log\frac{r_3}{2r_2}
    	-\frac{1}{\alpha}\int_{2r_2}^{r_3}\frac{P(r)}{r}\,dr
    	-CL_A r_3-\frac{CM_0\varepsilon^\beta}{2\alpha}r_3^2.
    \end{equation}
    Evaluating the integral of $P(r)/r$ using \eqref{eq:Pdef} yields
    \begin{equation}\label{eq:Pr_int_bound}
    	\int_{2r_2}^{r_3}\frac{P(r)}{r}\,dr
    	\le \frac{a_1}{4}r_3^2+\frac{a_2}{9}r_3^3+\frac{a_3}{16}r_3^4+\frac{b_2}{9}r_3^3+\frac{b_3}{16}r_3^4 .
    \end{equation}  
Integrating 
  \eqref{eq:Hlog_ineq_tildeN_upper} from $r_1$ to $2r_2$, by monotonicity of $\widetilde {\mathcal{N}}$ and the fact $-P(r)/(\alpha r)\le 0$, we obtain
    \begin{align}
    	\log\frac{H(2r_2)}{H(r_1)}
    	&=\int_{r_1}^{2r_2}\frac{H'(r)}{H(r)}\,dr \notag\\
    	&\le \int_{r_1}^{2r_2}\Bigg[\frac{2\alpha+n-2}{r}+\frac{e^{-c_0L_A r_1}}{\alpha r}\widetilde N(2r_2)
    	+CL_A+\frac{CM_0\varepsilon^\beta}{\alpha}r\Bigg]dr \notag\\
    	&\le \Bigg[2\alpha+n-2+\frac{e^{-c_0L_A r_1}}{\alpha}\widetilde {\mathcal{N}}(2r_2)\Bigg]\log\frac{2r_2}{r_1}
+CL_A(2r_2)+\frac{CM_0\varepsilon^\beta}{2\alpha}(2r_2)^2 .
    	\label{eq:int_upper}
    \end{align}
    Introduce the notations, \begin{equation}\label{eq:beta_gamma_def}
    	\xi:=e^{-c_0L_A r_1}\log\frac{2r_2}{r_1},
    	\qquad
    	\eta:=e^{-c_0L_A r_3}\log\frac{r_3}{2r_2}.
    \end{equation}
  By \eqref{eq:int_lower_simplified}--\eqref{eq:int_upper}, the following three-ball  estimate hold for $H$
    \begin{equation}\label{eq:threeball_H}
    	H(2r_2)\ \le\ H^{\frac{\eta}{\xi+\eta}}(r_1)\,H^{\frac{\xi}{\xi+\eta}}\,(r_3)
    	\exp\Bigg\{C L_A r_3+\frac{C M_0\varepsilon^\beta}{\alpha}r_3^2
    	+\frac{C}{\alpha}\Big(a_1 r_3^2+a_2 r_3^3+a_3 r_3^4+b_2 r_3^3+b_3 r_3^4\Big)\Bigg\},
    \end{equation}
    where $C=C(n,\lambda,\Lambda)>0$ and $a_1,a_2,a_3,b_2,b_3$ are in (\ref{eq:Pdef}).

    We also want to get rid of the weight function in $e_r$ in $H(r).$
   Let $ h(r):=\int_{B_r}u(x)^2\,dx.$
    Since  $e_r^{\alpha-1}\le r^{2(\alpha-1)}$, we have  \begin{equation}\label{eq:96_rederive}
    	\begin{aligned}
    		H(r)
    =\int_{B_r}u^2\,\nu\,e_r^{\alpha-1}\,dx
    		\le C\,r^{2(\alpha-1)}\,h(r),
    	\end{aligned}
    \end{equation}

    For $0<r<\rho\leq R$.
    We obtain
\begin{equation}\label{eq:97_rederive}
    	\begin{aligned}
    		H(\rho)
    		=\int_{B_\rho}u^2\,\nu\,e_\rho^{\alpha-1}\,dx
    		\geq c\,(\rho^2-r^2)^{\alpha-1}\,h(r),
    	\end{aligned}
    \end{equation}
    
    Thus, \begin{equation}\label{eq:hr2_from_H2r2}
    	h(r_2)\ \le\ \frac{1}{c}\left(\frac{4}{3}\right)^{\alpha-1}(2r_2)^{-2(\alpha-1)}\,H(2r_2).
    \end{equation}
  
    %Applying \eqref{eq:96_rederive} with $r=r_1$ and $r=r_3$ yields
    %\begin{equation}\label{eq:Hr1_upper}
    %	H(r_1)\le C\,r_1^{2(\alpha-1)}h(r_1),\qquad
    %	H(r_3)\le C\,r_3^{2(\alpha-1)}h(r_3).
  %  \end{equation}
    	By (\ref{eq:96_rederive}) and (\ref{eq:hr2_from_H2r2}), \eqref{eq:threeball_H} becomes
    		\begin{align}
    		h(r_2)
    		&\leq C'\left(\frac{4}{3}\right)^{\alpha-1}
    		\left(\frac{r_1}{2r_2}\right)^{\frac{2(\alpha-1)\eta}{\xi+\eta}}
    		\left(\frac{r_3}{2r_2}\right)^{\frac{2(\alpha-1)\xi}{\xi+\eta}}
    		\,h^{\frac{\eta}{\xi+\eta}}(r_1)\,h^{\frac{\xi}{\xi+\eta}}(r_3)
    		\exp\Big\{\mathcal{F}(\alpha,\varepsilon;r_3)\Big\},
    		\label{eq:hr2_sub2}
    	\end{align}
    	where 
    \begin{equation}\label{eq:F_def}
    		\mathcal{F}(\alpha,\varepsilon;r_3)
    		:=CL_A r_3+\frac{CM_0\varepsilon^\beta}{\alpha}\,r_3^2
    		+\frac{C}{\alpha}\big(a_1r_3^2+a_2r_3^3+a_3r_3^4+b_2r_3^3+b_3r_3^4\big).
    	\end{equation}
       % \textcolor{red}{ we need to calculate this out.
    	%	$\left(\frac{r_1}{2r_2}\right)^{\frac{2(\alpha-1)\eta}{\xi+\eta}}
    	%	\left(\frac{r_3}{2r_2}\right)^{\frac{2(\alpha-1)\xi}{\xi+\eta}}
    	%	\, $. 
        %It seems that it does not match the three-ball ineqality below}
    	where $C'=C'(n,\lambda,\Lambda)>0$.
    	Then we can write \eqref{eq:hr2_sub2} in the form
\begin{equation}\label{eq:98_rederive_general}
    		h(r_2)\ \le\ h^{\frac{\eta}{\xi+\eta}}(r_1)\,h^{\frac{\xi}{\xi+\eta}}(r_3)\,
    		\exp\{
    		C(\alpha-1)+\frac{2(\alpha-1)\eta}{\xi+\eta} \log \frac{r_1}{2r_2}+\frac{2(\alpha-1)\xi}{\xi+\eta} \log\frac{r_3}{2r_2}+\mathcal{F}(\alpha,\varepsilon; r_3)\big)
    		\}.
    	\end{equation}

    	%Recalling the definitions 
    	%\[
    	%a_1:=M,\qquad a_2:=K\varepsilon^{\beta-1},\qquad a_3:=\frac{K^2\varepsilon^{2\beta}}{\alpha},
    	%\qquad b_2:=L_A M,\qquad b_3:=L_AK\varepsilon^{\beta-1},
    	%\]
    	 % the bracket of $\mathcal{F}(\alpha,\varepsilon;r_3)$ in \eqref{eq:98_rederive_general} becomes
    	%\[
    	%\frac{C}{\alpha}\big(a_1r_3^2+a_2r_3^3+a_3r_3^4+b_2r_3^3+b_3r_3^4\big)
    	%= \frac{C}{\alpha}\left(Mr_3^2+K\varepsilon^{\beta-1}r_3^3+\frac{K^2\varepsilon^{2\beta}}{\alpha}r_3^4\right)
    	%+\frac{C}{\alpha}\left(L_A Mr_3^3+L_AK\varepsilon^{\beta-1}r_3^4\right).
    	%\]
   Note that $\log\frac{r_1}{2r_2}<0$. Recall $a_1, a_2, a_3$ and $b_1, b_2$ in (\ref{eq:Pdef}).
  The exponent in the (\ref{eq:98_rederive_general}) is bounded as
    \begin{equation}\label{eq:E_explicit}
    		\begin{aligned}
    			  C(\alpha-1)[\log\frac{r_3}{2r_2}+1] &+CL_A R
    			+\frac{CM_0\varepsilon^\beta}{\alpha}R^2
    			+\frac{CM}{\alpha}R^2
    			+\frac{CM_0}{\alpha}\varepsilon^{\beta-1}R^3 \\
    			&+\frac{CM_0^2}{\alpha^2}\varepsilon^{2\beta}R^4
    			 +\frac{CL_A M}{\alpha}R^3
    			+\frac{CL_A M_0}{\alpha}\varepsilon^{\beta-1}R^4 .
    	\end{aligned}
    \end{equation}
    Finally we optimize \eqref{eq:E_explicit} by determining $\varepsilon$ and $\alpha.$
     We may assume $M_0\varepsilon^\beta>1$, Otherwise, the $L^\infty$ norm of $V-V_{\varepsilon}$ is bounded in (\ref{VVV-1}). We also choose $M=\max\{ M, \ M_0\}$ since $\|V\|_{C^{0,\beta}}\leq M$.
     Assume $R>1$ and $L_A\geq 1$ for some fixed $R$ and $L_A$.
Thus, we have $ \frac{CM}{\alpha}R^2\leq  \frac{CL_A M}{\alpha}R^3$, $\frac{CL_A M}{\alpha}R^3\leq \frac{CL_{A} M\varepsilon^{\beta-1}}{\alpha}R^4,$  and 
\begin{align}
\frac{CM\varepsilon^\beta}{\alpha}R^2\leq    \frac{CM\varepsilon^{\beta-1}}{\alpha}R^3 \leq \frac{CL_{A} M\varepsilon^{\beta-1}}{\alpha}R^4.
\end{align}
We also assume $\alpha$ to be large. Then we are reduced to optimize the following term in (\ref{eq:E_explicit})
\begin{equation}
    			  C(\alpha-1)[\log\frac{r_3}{2r_2}+1]+
    			\frac{CM^2}{\alpha^2}\varepsilon^{2\beta}R^4
    			+\frac{CL_A M}{\alpha}\varepsilon^{\beta-1}R^4 .
    \end{equation}
  %  Fix $\alpha\ge 2$ and $R\in(0,r_0]$. Define
  %  \begin{equation}\label{eq:eps_choice_full}
   % 	\varepsilon:=\min\left\{\frac{R}{2},\ \left(\frac{\alpha}{KR}\right)^{\frac{1}{\beta+1}}\right\}.
  %  \end{equation}
    %Then
    We choose $\frac{M^2}{\alpha^2}\varepsilon^{2\beta}=
    			\frac{ M}{\alpha}\varepsilon^{\beta-1}$. Then
    $\varepsilon=(\frac{\alpha }{M})^{\frac{1}{\beta+1}}$.
    Hence \begin{equation}
    			  C(\alpha-1)[\log\frac{r_3}{2r_2}+1]+
    			{C}L_A^{\frac{2\beta}{\beta+1}} (\frac{M}{\alpha})^{\frac{2}{\beta+1}} R^4
    \end{equation}
  Next, to optimize the last inequality, we select $\alpha=(\frac{M}{\alpha})^{\frac{2}{\beta+1}} R^4 $, i.e.  $\alpha=M^{\frac{2}{\beta+3}} R^{\frac{4(\beta+1)}{\beta+3}}$.
  Thus, we obtain the best exponent $C_1 M^{\frac{2}{\beta+3}} R^{\frac{4(\beta+1)}{\beta+3}} (\log\frac{r_3}{2r_2}+1)$ in (\ref{eq:98_rederive_general}). Therefore, the three-ball inequality 
  \begin{equation}
				\|u\|_{L^2(B_{r_2})}\le \exp\!\left\{\,C_1 M^{\frac{2}{\beta+3}} R^{\frac{4(\beta+1)}{\beta+3}} (\log\frac{r_3}{2r_2}+1)\,\right\}\|u\|_{L^2(B_{r_1})}^{\theta}\|u\|_{L^2(B_{r_3})} ^{1-\theta}
					\end{equation}
   is arrived, where $0<\theta=\frac{\log \frac{r_3}{2r_2}}{e^{CL_A(r_3-r_1)}\log \frac{2r_2}{r_1} +\log \frac{r_3}{2r_2} } <1$.
  \end{proof}

At last, we provide a corollary for the quantitative three-ball inequality for 
$u$ satisfying
    \begin{align}
    -\operatorname{div}(A\nabla u)+W(x) \cdot \nabla u+V(x)u=0\ \text{in}\ B_R,
    \label{general-100}
    \end{align}
    where \begin{align}
    \label{WVWV-1}
    W(x)\in C^{0,\beta_0}, \ V(x)\in  C^{0,\beta} \quad \mbox{with} \  \ \|W\|_{C^{0,\beta_0}}\le K, \ \|V\|_{C^{0,\beta}}\le M.\end{align}
    Making use of the argument in Section 3 and current section, we can show that
\begin{corollary}\label{coro: Corollary on general case}
Let $u$ solve $(\ref{general-100})$
under the assumption (\ref{lips-1}) and (\ref{WVWV-1}).
Then there exists some universal constant $C(n, \lambda, \Lambda, L_A, \beta, \beta_0)$ such that for any $0<r_1<r_2<2r_2<r_3<\frac{R}{2}$,
    \begin{equation}\label{eq:general equation}
        \|u\|_{r_2}
    	\le
    	\exp\!\Bigg\{C\Bigl[M^{\frac{2}{\beta+3}} R^{\frac{4(\beta+1)}{\beta+3}} +K^{\frac{2}{\beta_0+1}}R^{\frac{2}{\beta_0+1}}\Bigl]
    	\Bigl[
    	1+2\log\!\Bigl(\frac{r_3}{2 r_2}\Bigr)
    	\Bigr]
    	\Bigg\}
    	\,
    	\|u\|_{r_1}^{\theta}\,
    	\|u\|_{r_3}^{1-\theta},
\end{equation}
 and 
\[
0<\theta=
\frac{\log r_3-\log 2r_2}
{\log r_3-\log 2r_2+Ce^{C(r_3-r_1)}\bigl[\log 2r_2-\log r_1\bigr]}<1.
\]
    \end{corollary}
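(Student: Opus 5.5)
We combine the lifting procedure of Section \ref{nabla term and lifting} with the weighted frequency function of Section \ref{general eqution}. First mollify both lower order terms: $W_\varepsilon=\rho_\varepsilon*W$ with $g_\varepsilon=W_\varepsilon-W$, and $V_\delta=\rho_\delta*V$ with $f_\delta=V_\delta-V$, using two independent parameters $\varepsilon,\delta$. By Lemma \ref{lem:moll} and \eqref{eq:mollEst} we have $|g_\varepsilon|\le CK\varepsilon^{\beta_0}$, $|\nabla^k W_\varepsilon|\le CK\varepsilon^{\beta_0-k}$ for $k=1,2$, and the bounds \eqref{VVV-1} for $V_\delta$. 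The equation becomes
\[
-\operatorname{div}(A\nabla u)+W_\varepsilon\cdot\nabla u+V_\delta u=g_\varepsilon\cdot\nabla u+f_\delta u .
\]

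Next we perform the lifting of Section \ref{nabla term and lifting} on the regular drift $W_\varepsilon\cdot\nabla u$. The variable $t$ with $\hat u=e^{LBt}u$ absorbs $W_\varepsilon$ into an off-diagonal block of the leading matrix. The block structure \eqref{eq:A1def} now has $A$ in place of $I_{n\times n}$. Ellipticity still holds for $L$ large depending on $\lambda$, since $|b|\le C/L$ and the diagonal block is $\ge\lambda$. The variable $s$ absorbs $BI\cdot\nabla u$, and the variable $y$ absorbs $-2L^2B^2+\tfrac12(\operatorname{div}b)LB$, which requires the $\nabla^2W_\varepsilon$ bound to keep $|\nabla a_{yy}|\le C$. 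We do not use the final imaginary lifting in $z$ here. Instead we keep the constant zeroth order term $-(2L^2B^2+L_1\hat B)$ and merge it with the Hölder potential, obtaining an equation in $\mathbb R^{n+3}$:
\[
-\operatorname{div}(\tilde A\nabla\tilde u)+\big(V_\delta+c_\varepsilon\big)\tilde u=f_\delta\tilde u+\tilde g_\varepsilon\cdot\nabla\tilde u .
\]
Here $\tilde A$ is uniformly elliptic with $|\nabla\tilde A|\le C(L_A+1)$, and $c_\varepsilon$ is a constant. Since $\nabla c_\varepsilon=0$, this constant contributes only to the $L$-term.

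We then run the monotonicity argument of Lemma \ref{lem:monotoneN} on this lifted equation, with the bounded drift error $\tilde g_\varepsilon\cdot\nabla\tilde u$ added. That error is handled exactly as in \cite{D25} Proposition 4.1. In the completed square we set $\Phi=A\nabla u\cdot x+\frac{e_r}{4\alpha}(f_\delta u+\tilde g_\varepsilon\cdot\nabla u)$. The cross terms $\int(\tilde g_\varepsilon\cdot\nabla u)^2e_r^{\alpha+1}$ are controlled by $D$, giving a factor $e^{C\|\tilde g\|^2r^2}$ type multiplier. This produces an almost-monotone quantity $e^{C(L_A+K^2\varepsilon^{2\beta_0}r)r}(\mathcal N+P)$. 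The resulting three-ball inequality for $\tilde u$ has exponent equal to the sum of the Theorem \ref{thm:threeballs-II-opt-correct} terms in $(\alpha,\delta,M)$ and the terms $CK^2\varepsilon^{2\beta_0}R^2(1+\log\frac{r_3}{2r_2})$. The factor $2$ in front of the logarithm arises from these two sources being added.

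The main obstacle is the large constant $c_\varepsilon\sim L^2B^2\sim K^2\varepsilon^{2\beta_0-2}$ entering $L(r)$. In the frequency argument it costs $c_\varepsilon R^2/\alpha$, which is not the optimal rate. We would try first to avoid it by keeping the $z$-lifting $e^{i\sqrt{c_\varepsilon}z}$, so that the zeroth order term is only $V_\delta$. The price is that $\tilde u$ becomes complex-valued. The frequency computations go through verbatim with $|u|^2$ and $\mathrm{Re}$, as the paper notes for complex potentials. Finally we return to $u$ via \eqref{compare-1}, which costs $CR(K\varepsilon^{\beta_0-1}+K^{1/2}\varepsilon^{\beta_0/2-1})$. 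We then choose $\varepsilon=(RK)^{-1/(\beta_0+1)}$ as in Theorem \ref{thm:threeballs-III-final}, and $\delta,\alpha$ as in Theorem \ref{thm:threeballs-II-opt-correct}. Since these choices decouple, the exponents add to $C[M^{\frac{2}{\beta+3}}R^{\frac{4(\beta+1)}{\beta+3}}+K^{\frac{2}{\beta_0+1}}R^{\frac{2}{\beta_0+1}}](1+2\log\frac{r_3}{2r_2})$. The value of $\theta$ comes from the $e^{CL_Ar}$ weight, as in \eqref{theta-1}.
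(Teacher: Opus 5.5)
Your route is the one the paper intends. The paper gives no separate proof, only the remark that the corollary follows by combining the lifting of Section 3 with the weighted frequency argument of Section 4. Your ingredients are the right ones:
\begin{itemize}
\item separate mollifications of $W$ and $V$;
\item lifting $W_\varepsilon$ with $A$ in the $x$-block, which stays elliptic for $L$ large depending on $n,\lambda$;
\item keeping the imaginary $z$-lifting, so that $V_\delta$ is the only zeroth order term left (discard your intermediate version without it);
\item completing the square with $\Phi=A\nabla u\cdot x+\frac{e_r}{4\alpha}(f_\delta u+\tilde g_\varepsilon\cdot\nabla u)$.
\end{itemize}
Passing verbatim to the complex-valued $\tilde u$ uses that $V$, hence $V_\delta$, is real. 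Otherwise the $\mathrm{Im}\,V_\delta$ part of the cross term in $D'$ does not cancel against $L'$.

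One step, as you wrote it, does not produce the stated $\theta$ and must be corrected. The drift part of the remainder is $-\frac{H}{4\alpha r}\int|\tilde g_\varepsilon\cdot\nabla u|^2\nu^{-1}e_r^{\alpha+1}\ge -\frac{CK^2\varepsilon^{2\beta_0}r}{\alpha}HD$. So the almost monotone quantity is $e^{CL_Ar+CK^2\varepsilon^{2\beta_0}r^2/\alpha}(\mathcal N+P)$, and the factor $1/\alpha$ is essential. With your weight $e^{CK^2\varepsilon^{2\beta_0}r^2}$, the ratio $\xi/\eta$ would pick up $e^{cK^2\varepsilon^{2\beta_0}(r_3^2-r_1^2)}$ and $\theta$ would depend on $K$.

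For the same reason you cannot take $\alpha$ as in Theorem 3. There is a single frequency parameter, and it must also satisfy $\alpha\gtrsim K^2\varepsilon^{2\beta_0}R^2=(KR)^{\frac{2}{\beta_0+1}}$. Take $\alpha=\alpha_V+\alpha_W$ with $\alpha_V=M^{\frac{2}{\beta+3}}R^{\frac{4(\beta+1)}{\beta+3}}$ and $\alpha_W=(KR)^{\frac{2}{\beta_0+1}}$, and keep $\delta=(\alpha_V/M)^{\frac{1}{\beta+1}}$. Then:
\begin{itemize}
\item every $V$-error in the exponent carries $\alpha$ in the denominator and only improves;
\item the drift weight is at most $e^{C}$;
\item $C\alpha(1+\log\frac{r_3}{2r_2})$ plus the return cost $CRK\varepsilon^{\beta_0-1}=C(KR)^{\frac{2}{\beta_0+1}}$ gives the sum in the statement.
\end{itemize}
This choice of $\alpha$, not an independent optimization of the two parts, is why the two contributions add.

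Finally, the drift also enters $I_0=I-E-G$ with $G=\mathrm{Re}\int\bar u\,\tilde g_\varepsilon\cdot\nabla u\,e_r^\alpha$, and hence enters $H'/H$. Absorbing it via $|G|\le C\|\tilde g_\varepsilon\|_\infty rH^{1/2}D^{1/2}$ costs $CK^2\varepsilon^{2\beta_0}r/\alpha$, which is harmless for this $\alpha$. It also makes the coefficients of $\mathcal N$ in the upper and lower bounds for $H'/H$ differ by a fixed factor. That factor, and not only the $e^{CL_Ar}$ weight, is the source of the constant $C$ in front of $e^{C(r_3-r_1)}$ in $\theta$.
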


\section{Appendix}
In this section, we present the proof of Lemma \ref{lem:hprime} and Lemma \ref{lem:Dprime_variable}, which are used in the section 4.
    	\begin{proof}[Proof of Lemma \ref{lem:hprime}]
    		
    		Note that $\nu$ in (\ref{eq:mu}) is independent of $r$.
           The differentiation of $H$ with respect to $r$ still acts on $e^{\alpha-1}_r$.  Hence
    		\begin{equation}\label{eq:Hprime_step1}
    			\begin{aligned}
    				H'(r)
    =\int_{B_r}|u|^2\nu\,\partial_r(e_r^{\alpha-1})\,dx 
    				=2(\alpha-1)r\int_{B_r}|u|^2\nu\,e_r^{\alpha-2}\,dx .
    			\end{aligned}
    		\end{equation}
    		By the fact that $r^2=e_r+|x|^2$, we have
    		\begin{equation}\label{eq:Hprime_minus}
    			H'(r)-\frac{2(\alpha-1)}{r}H(r)
    			=\frac{2(\alpha-1)}{r}\int_{B_r}|u|^2\nu\,|x|^2\,e_r^{\alpha-2}\,dx .
    		\end{equation}
    		The definition of $\nu$ in (\ref{eq:mu}) gives that
    $\nu(x)|x|^2=A(x)x\cdot x.$
    		Hence \eqref{eq:Hprime_minus} becomes
\begin{equation}\label{eq:Hprime_Ax}
    			H'(r)-\frac{2(\alpha-1)}{r}H(r)
    			=\frac{2(\alpha-1)}{r}\int_{B_r}|u|^2\,A(x)x\cdot x\,e_r^{\alpha-2}\,dx .
    		\end{equation}
    		%Since $\nabla e_r(x)=\nabla(r^2-|x|^2)=-2x$, we have
    		%\[
    		%\nabla(e_r^{\alpha-1})=(\alpha-1)e_r^{\alpha-2}\nabla e_r
    		%=-2(\alpha-1)x\,e_r^{\alpha-2}.
    		%\]
    		%Hence
    		%\[
    		%2(\alpha-1)x\,e_r^{\alpha-2}=-\nabla(e_r^{\alpha-1}).
    		%\]
    		Then \eqref{eq:Hprime_Ax} yields that
\begin{equation}\label{eq:Hprime_div_form}
    			H'(r)-\frac{2(\alpha-1)}{r}H(r)
    			=-\frac{1}{r}\int_{B_r}|u|^2\,A(x)x\cdot\nabla(e_r^{\alpha-1})\,dx .
    		\end{equation}
    		Integrating by parts gives 
            %and using that $e_r^{\alpha-1}=0$ on $\partial B_r$, we get
    		%\begin{equation}\label{eq:Hprime_ibp1}
    			%-\int_{B_r}|u|^2\,A x\cdot\nabla(e_r^{\alpha-1})
    			%=\int_{B_r}\operatorname{div} \big(|u|^2 A x\big)\,e_r^{\alpha-1}\,dx .
    		%\end{equation}
    		%Combining \eqref{eq:Hprime_div_form}--\eqref{eq:Hprime_ibp1}, 
\begin{equation}\label{eq:Hprime_expand_div}
    			H'(r)-\frac{2(\alpha-1)}{r}H(r)
    			=\frac{1}{r}\int_{B_r}2u\,A\nabla u\cdot x\,e_r^{\alpha-1}\,dx
    			+\frac{1}{r}\int_{B_r}|u|^2\,{\rm div} (Ax)\,e_r^{\alpha-1}\,dx .
    		\end{equation}

    		%Recall
    		%\begin{equation}\label{eq:I_def_410}
    		%	I_0(r):=2\alpha\int_{B_r}u\,A\nabla u\cdot x\,e_r^{\alpha-1}\,dx .
    		%\end{equation}
    		Thus, the first term on the right-hand side of \eqref{eq:Hprime_expand_div} equals $\frac{1}{\alpha r}I_0(r)$.
    		For the second term, we write ${\rm div}(Ax)=n\nu+\big({\rm div}(Ax)-n\nu\big)$ and obtain that
    		\begin{equation}\label{eq:split_divAx}
    			\frac{1}{r}\int_{B_r}|u|^2\,{\rm div}(Ax)\,e_r^{\alpha-1}\,dx
    			=\frac{n}{r}\int_{B_r}|u|^2\,\nu\,e_r^{\alpha-1}\,dx
    			+\frac{1}{r}\int_{B_r}|u|^2\big({\rm div}(Ax)-n\nu\big)e_r^{\alpha-1}\,dx .
    		\end{equation}
    		%Since $H(r)=\int_{B_r}|u|^2\nu e_r^{\alpha-1}$, 
            The first term on the right hand side of \eqref{eq:split_divAx}
    		is $\frac{n}{r}H(r)$. For the second term we define
    		\begin{equation}\label{eq:EH_def_again}
\mathcal{E}_H(r):=\int_{B_r}|u|^2\Big[{\rm div}(Ax)\,\nu^{-1}-n\Big]\nu\,e_r^{\alpha-1}\,dx ,
    		\end{equation}
    	 The last three inequalities give that
%\begin{equation}\label{eq:Hprime_pre413}
    		%	H'(r)-\frac{2(\alpha-1)}{r}H(r)=\frac{n}{r}H(r)+\frac{1}{\alpha r}I_0(r)+\frac{1}{r}E_H(r),
    		%\end{equation}
    		%hence
\begin{equation}\label{eq:Hprime_413_again}
    			H'(r)=\frac{2(\alpha-1)+n}{r}H(r)+\frac{1}{\alpha r}I_0(r)+\frac{1}{r}\mathcal{E}_H(r).
    		\end{equation}
    		Recall \eqref{eq:I0IE}, we obtain the derivative of $H$ in (\ref{eq:Hprime_final_413}). From (\ref{AAA-1}), we get
    		\[
    		|\mathcal{E}_H(r)|
    \leq cL_Ar\int_{B_r}|u|^2\nu\,e_r^{\alpha-1}\,dx
    		=c L_A\,r\,H(r),
    		\]
    		which is \eqref{eq:EH_bound_414}.
    		This completes the proof.
    	\end{proof}

   The proof of Lemma \ref{lem:Dprime_variable} is derived from integration by parts argument as well.
     	\begin{proof}[Proof of Lemma \ref{lem:Dprime_variable}]
    		Recall $D(r)$ in (\ref{eq:DLEdef}).
    		Differentiating with respect to $r$. gives 
\begin{equation}\label{eq:Dprime_step1}
    			\begin{aligned}
    				D'(r)
    				=2\alpha r\int_{B_r} a_{ij}\partial_i u\,\partial_j u\,e_r^{\alpha-1}\,dx.
    			\end{aligned}
    		\end{equation}
    		Splitting $r^2=e_r+|x|^2$ in the last integral, we obtain
\begin{equation}\label{eq:Dprime_step2}
    			\begin{aligned}
    				D'(r)
    				&=\frac{2\alpha}{r}\int_{B_r} a_{ij}\partial_i u\,\partial_j u\,e_r^{\alpha}\,dx
    				+\frac{2\alpha}{r}\int_{B_r} a_{ij}\partial_i u\,\partial_j u\,|x|^2\,e_r^{\alpha-1}\,dx \\
    				&=\frac{2\alpha}{r}D(r)+\frac{2\alpha}{r}\int_{B_r} a_{ij}\partial_i u\,\partial_j u\,|x|^2\,e_r^{\alpha-1}\,dx.
    			\end{aligned}
    		\end{equation}

    	Recall $\omega(x)$ is given in (\ref{eq:Zdef}).  Then $\omega(x)\cdot x=|x|^2$. 
    		It holds that
\begin{equation}\label{eq:divZomega}
    			{\rm div} (\omega e_r^\alpha)
    			=({\rm div} \omega)\,e_r^\alpha+\omega\cdot\nabla(e_r^\alpha)
    			=({\rm div} \omega)\,e_r^\alpha-2\alpha|x|^2e_r^{\alpha-1}.
    		\end{equation}
    		Set $F(x):=a_{ij}(x)\partial_i u(x)\partial_j u(x)$. Multiplying \eqref{eq:divZomega} by $F$ and integrating over $B_r$ gives
    		\begin{align}\label{eq:rewrite_last_integral}
    			2\alpha\int_{B_r}F|x|^2e_r^{\alpha-1}\,dx
    			&=\int_{B_r}F({\rm div}\omega)\,e_r^\alpha\,dx-\int_{B_r}F\,{\rm div}(\omega e_r^\alpha)\,dx \notag\\
    			&=\int_{B_r}F({\rm div}\omega)\,e_r^\alpha\,dx+\int_{B_r}\nabla F\cdot \omega\,e_r^\alpha\,dx.
    		\end{align}
    		%In the last step we integrated by parts:
    		%\[
    		%\int_{B_r}F\,{\rm div}(\omegae_r^\alpha)\,dx
    		%=\int_{\partial B_r}F\,\omegae_r^\alpha\cdot\nu\,dS-\int_{B_r}\nabla F\cdot \omega\,e_r^\alpha\,dx
    		%=-\int_{B_r}\nabla F\cdot \omega\,e_r^\alpha\,dx,
    		%\]
    		%because $e_r^\alpha=0$ on $\partial B_r$. 
            %Writing $\nabla F\cdot \omega=\partial_kF\,\omega_k$ and decomposing
    		Let ${\rm div} \omega=n+(\rm div \omega-n)$. It follow from \eqref{eq:Dprime_step2} and \eqref{eq:rewrite_last_integral} that
\begin{equation}\label{eq:Dprime_step3}
    			D'(r)=\frac{2\alpha+n}{r}D(r)+\frac{1}{r}\int_{B_r}\partial_kF\,\omega_k\,e_r^\alpha\,dx
    			+\frac{1}{r}\int_{B_r}({\rm div} \omega-n)\,F\,e_r^\alpha\,dx.
    		\end{equation}
    		
    		We expand
    		\[
    		\partial_kF=\partial_k(a_{ij}\partial_i u\,\partial_j u)
    		=(\partial_k a_{ij})\partial_i u\,\partial_j u
    		+a_{ij}(\partial_{ik}u)\,\partial_j u
    		+a_{ij}\partial_i u\,(\partial_{jk}u).
    		\]
    		Using $a_{ij}=a_{ji}$, the last two terms are equal after swapping $(i,j)$. Hence
    		\begin{equation}\label{eq:T0}
    			\int_{B_r}\partial_kF\,\omega_k\,e_r^\alpha\,dx
    			=\int_{B_r}(\partial_k a_{ij})\partial_i u\,\partial_j u\,\omega_k\,e_r^\alpha\,dx
    			+2\int_{B_r}a_{ij}(\partial_{ik}u)\,\partial_j u\,\omega_k\,e_r^\alpha\,dx.
    		\end{equation}
    		For the second integral in \eqref{eq:T0}, integrating by parts in the $i$th variable yields
    		\begin{align}\label{eq:IBP_second_derivatives}
    			2\int_{B_r}a_{ij}(\partial_{ik}u)\,\partial_j u\,\omega_k\,e_r^\alpha\,dx
    			&=-2\int_{B_r}\partial_k u\,\partial_i\!\big(a_{ij}\partial_j u\,\omega_k\,e_r^\alpha\big)\,dx \notag\\
    			&=-2\int_{B_r}\partial_k u\,\omega_k\,\partial_i(a_{ij}\partial_j u)\,e_r^\alpha\,dx 
    	-2\int_{B_r}a_{ij}\partial_k u\,\partial_j u\,(\partial_i \omega_k)\,e_r^\alpha\,dx \notag\\
    			&\quad +4\alpha\int_{B_r}a_{ij}\partial_j u\,x_i\;(\partial_k u\,\omega_k)\,e_r^{\alpha-1}\,dx. 
    		\end{align}
    		%again with no boundary term since $e_r^\alpha=0$ on $\partial B_r$.
    		
    		%We rewrite the last term in \eqref{eq:IBP_second_derivatives} using
    		%$\partial_i(e_r^\alpha)=-2\alpha x_ie_r^{\alpha-1}$:
    		%\begin{align}\label{eq:last_term_square}
    		%	-2\int_{B_r}a_{ij}\partial_k u\,\partial_j u\,\omega_k\,\partial_i(e_r^\alpha)\,dx
    		%	&=\notag\\
    			%&=4\alpha\int_{B_r}(A\nabla u\cdot %x)\,(\nabla u\cdot \omega)\,e_r^{\alpha-1}\,dx.
    		%\end{align}

    		Since $\omega=\nu^{-1}Ax$, we have $\nabla u\cdot \omega=\nu^{-1}(A\nabla u\cdot x)$. Then
	\begin{equation}\label{eq:square_identity}
    			4\alpha\int_{B_r}a_{ij}\partial_j u\,x_i\;(\partial_k u\,\omega_k)\,e_r^{\alpha-1}\,dx 
    			=4\alpha\int_{B_r}(A\nabla u\cdot x)^2\,\nu^{-1}\,e_r^{\alpha-1}\,dx.
    		\end{equation}
    		Next, we decompose the second  term in the right hand side of  \eqref{eq:IBP_second_derivatives} by adding and subtracting $\delta_{ik}$,
\begin{align}\label{eq:middle_term_decomp}
    			-2\int_{B_r}a_{ij}\partial_k u\,\partial_j u\,(\partial_i \omega_k)\,e_r^\alpha\,dx
    			&=2\int_{B_r}a_{ij}\partial_k u\,\partial_j u\,(\delta_{ik}-\partial_i \omega_k)\,e_r^\alpha\,dx
    			-2\int_{B_r}a_{ij}\partial_k u\,\partial_j u\,\delta_{ik}\,e_r^\alpha\,dx \notag\\
&=2\int_{B_r}a_{ij}\partial_k u\,\partial_j u\,(\delta_{ik}-\partial_i \omega_k)\,e_r^\alpha\,dx
    			-2D(r).
    		\end{align}
    		Finally, the first term in the right hand side of \eqref{eq:IBP_second_derivatives} becomes
    		\begin{equation}\label{eq:div_term}
    			-2\int_{B_r}\partial_k u\,\omega_k\,\partial_i(a_{ij}\partial_j u)\,e_r^\alpha\,dx
    			=-2\int_{B_r}(A\nabla u\cdot x)\,\nu^{-1}\,{\rm div}(A\nabla u)\,e_r^\alpha\,dx.
    		\end{equation}
                		Combining \eqref{eq:T0}--\eqref{eq:div_term}, we obtain the identity
    \begin{equation}\label{eq:Tfinal}
    			\begin{aligned}
    				\int_{B_r}\partial_kF\,\omega_k\,e_r^\alpha\,dx
    				=&\ -2D(r)
    				+4\alpha\int_{B_r}(A\nabla u\cdot x)^2\,\nu^{-1}\,e_r^{\alpha-1}\,dx
    				-2\int_{B_r}(A\nabla u\cdot x)\,\nu^{-1}\,{\rm div}(A\nabla u)\,e_r^\alpha\,dx \\
    				&\ +2\int_{B_r}a_{ij}\partial_k u\,\partial_j u\,(\delta_{ik}-\partial_i \omega_k)\,e_r^\alpha\,dx
    				+\int_{B_r}(\partial_k a_{ij})\partial_i u\,\partial_j u\,\omega_k\,e_r^\alpha\,dx.
    			\end{aligned}
    		\end{equation}
    		
    		Substituting \eqref{eq:Tfinal} into \eqref{eq:Dprime_step3} gives	\begin{equation}\label{eq:Dprime_step4}
    			\begin{aligned}
    				D'(r)=&\ \frac{2\alpha+n-2}{r}D(r)
    				+\frac{4\alpha}{r}\int_{B_r}(A\nabla u\cdot x)^2\,\nu^{-1}\,e_r^{\alpha-1}\,dx
    				\\
    				&-\frac{2}{r}\int_{B_r}(A\nabla u\cdot x)\,\nu^{-1}\,{\rm div}(A\nabla u)\,e_r^\alpha\,dx+\frac{1}{r}\mathcal{E}_D(r),
    			\end{aligned}
    		\end{equation}
    		where the error term is given as
    		\begin{equation}
    				\begin{aligned}
    					\mathcal{E}_D(r):=&\ \int_{B_r}({\rm div} \omega-n)\,a_{ij}\partial_i u\,\partial_j u\,e_r^\alpha\,dx
    					+2\int_{B_r}a_{ij}\partial_k u\,\partial_j u\,(\delta_{ik}-\partial_i \omega_k)\,e_r^\alpha\,dx \\
    					&\ +\int_{B_r}(\partial_k a_{ij})\partial_i u\,\partial_j u\,\omega_k\,e_r^\alpha\,dx.
    			\end{aligned}
    		\end{equation}
    	
    	The bound \eqref{eq:ED_bound} follows from \eqref{eq:Zbounds} that
    	\[
    	|\mathcal{E}_D(r)|\le C\int_{B_r}L_A|x|\ |\nabla u|^2\,e_r^\alpha\,dx \le  C L_A r\,D(r).
    	\]
        This finishes the proof of  Lemma \ref{lem:Dprime_variable}.
    	\end{proof}
    
    \bibliography{reference}
\bibliographystyle{abbrv}

\end{document}